\pgfplotsset{compat=newest}
\renewcommand{\cal}{\mathcal}
\newcommand{\cN}{{\cal N}}
\newcommand{\bma}{{\bm{a}}}
\newcommand{\bme}{{\bm{e}}}
\newcommand{\bmj}{{\bm{j}}}
\newcommand{\bmu}{{\bm{u}}}
\newcommand{\bmv}{{\bm{v}}}
\newcommand{\bmw}{{\bm{w}}}
\newcommand{\bmy}{{\bm{y}}}
\newcommand{\bmZ}{\bm Z}
\newcommand{\bmX}{\bm X}
\newcommand{\bmxi}{{\bm \xi}}
\newcommand{\bmtau}{{\bm \tau}}
\newcommand{\rd}{{\rm d}}
\newcommand{\bE}{\mathbb{E}}
\newcommand{\bP}{\mathbb{P}}
\newcommand{\bR}{{\mathbb R}}
\newcommand{\bS}{\mathbb S}
\newcommand{\al}{\alpha}
\DeclareMathOperator{\sgn}{sgn}
\DeclareMathOperator{\OO}{O}
\DeclareMathOperator{\oo}{o}
\DeclareMathOperator{\argmax}{argmax}
\newcommand{\deq}{\mathrel{\mathop:}=} 
\renewcommand{\leq}{\leqslant}
\renewcommand{\geq}{\geqslant}
\newcommand{\bmI}{\bm I}
\newcommand{\qq}[1]{[\![{#1}]\!]}
\newcommand{\beq}{\begin{equation}}
\newcommand{\eeq}{\end{equation}}
\theoremstyle{plain} 
\newtheorem{theorem}{Theorem}[section]
\newtheorem*{theorem*}{Theorem}
\newtheorem{lemma}[theorem]{Lemma}
\newtheorem*{lemma*}{Lemma}
\newtheorem{corollary}[theorem]{Corollary}
\newtheorem*{corollary*}{Corollary}
\newtheorem*{proposition*}{Proposition}
\newtheorem{assumption}[theorem]{Assumption}
\newtheorem*{assumption*}{Assumption}
\newtheorem{claim}[theorem]{Claim}
\newtheorem*{definition*}{Definition}
\newtheorem*{example*}{Example}
\newtheorem*{remark*}{Remark}
\newtheorem*{remarks*}{Remarks}
  \title{Power Iteration for Tensor PCA}
  \author[1]{Jiaoyang~Huang\thanks{jh4427@nyu.edu}}
  \author[2]{Daniel Z.~Huang\thanks{dzhuang@caltech.edu}}
    \author[3]{Qing~Yang\thanks{yangq@ustc.edu.cn}}
    \author[4]{Guang~Cheng\thanks{chengg@purdue.edu}}
\affil[1]{New York University, New York, NY}
\affil[2]{California Institute of Technology, Pasadena, CA}
\affil[3]{University of Science and Technology of China, China}
\affil[4]{Purdue University, West Lafayette, IN}
\date{}
\begin{document}



\maketitle

\begin{abstract}
In this paper, we study the power iteration algorithm for the spiked tensor model, as introduced in \cite{NIPS2014_5616}. We give necessary and sufficient conditions for the convergence of the power iteration algorithm. When the power iteration algorithm converges, for the rank one spiked tensor model, we show the estimators for the spike strength and linear functionals of the signal are asymptotically Gaussian; for the multi-rank spiked tensor model, we show the estimators are asymptotically mixtures of Gaussian. This new phenomenon is different from the spiked matrix model.
Using these asymptotic results of our estimators, we construct valid and efficient confidence intervals for spike strengths and linear functionals of the signals.

\end{abstract}

\section{Introduction}

Modern real scientific data call for more advanced  structures  than
matrices. High order arrays, or tensors have been actively considered in neuroimaging analysis, topic modeling, signal processing and recommendation system \cite{frolov2017tensor, Comon,Hack,Kar,Rendle,zhou, simony2016dynamic,cichocki2015tensor,sidiropoulos2017tensor}.
 Setting the stage, imagine that the signal is in the form of a large symmetric low-rank  $k$-th order tensor
\begin{align}\label{e:rankr}
\bmX^*=\sum_{j=1}^r\beta_j \bmv_j^{\otimes k}\in \otimes^k \bR^n,
\end{align}
where $r$ $(r\ll n)$ represents the rank and $\beta_j$ are the strength of the signals.
Such low-rank tensor components appear in various applications, e.g. community detection\cite{Ana1}, moments estimation for latent variable models \cite{Hsu,Ana2} and hypergraph matching \cite{Duch}.
Suppose that we do not have access to perfect measurements about the
entries of this signal tensor. The observations $\bmX=\bmX^*+\bmZ$
are contaminated by a substantial amount of random noise (reflected by the random tensor $\bmZ$ which has  i.i.d. Gaussian entries with mean $0$ and variance $1/n$.). The aim is
to perform reliable estimation and inference on the unseen signal tensor $\bmX^*$. In literature, this is the spiked tensor model, introduced in \cite{NIPS2014_5616}.

In the special case, when $k=2$, the above model reduces to the well-known ``spiked matrix model" \cite{johnstone2001distribution}. In this setting it is known that there is an order $1$ critical signal-to-noise ratio $\beta_{\rm c}$, such that below $\beta_{\rm c}$, it is information-theoretical impossible to detect the spikes, and above $\beta_{\rm c}$, it is possible to detect the
spikes by Principal Component Analysis (PCA). A body of work has quantified the behavior of PCA in this setting \cite{johnstone2001distribution, baik2005phase,baik2006eigenvalues,paul2007asymptotics, benaych2012singular,bai2012sample, johnstone2009consistency, birnbaum2013minimax,cai2013sparse,ma2013sparse,vu2013minimax,cai2015optimal,el2008spectrum,ledoit2012nonlinear,donoho2018optimal}.
We refer readers to the review articles \cite{johnstone2018pca}
for more discussion and references to this and related lines of work

Tensor problems are far more than an extension of matrices. Not only the more involved structures and  high-dimensionality, many concepts are not well defined \cite{Kolda}, e.g. eigenvalues and eigenvectors, and most tensor problems are NP-hard \cite{Hill}. Despite a large body of work tackling the spiked tensor model, there are several fundamental yet unaddressed challenges that deserve further attention.

\textbf{Computational Hardness. } The same as the spiked matrix model, for spiked tensor model, 
there is an order $1$ critical signal-to-noise ratio $\beta_{k}$ (depending on the order $k$), such that below $\beta_{k}$, it is information-theoretical impossible to detect the spikes, and above $\beta_{k}$, the maximum likelihood estimator is a distinguishing statistics \cite{chen2019phase,chen2018phase,lesieur2017statistical,perry2020statistical,jagannath2018statistical}.
In the matrix setting the
maximum likelihood estimator is the top eigenvector, which can be computed in polynomial time
by, e.g., power iteration. However, for order $k\geq 3$ tensor, computing the maximum likelihood estimator is NP-hard in generic setting. In this setting, it is widely believed that there is a regime of signal-to-noise
ratios for which it is information theoretically possible to recover the signal but there
is no known algorithm to efficiently approximate it.
In the pioneer work \cite{NIPS2014_5616}, the algorithmic aspects of this model has been studied under the special setting when the rank $r=1$.  They showed that  tensor power iteration with random initialization recovers the signal provided $\beta\gg n^{(k-1)/2}$, and tensor unfolding recovers the signal provided $\beta\gg n^{(\lceil k/2\rceil-1)/2}$. Based on heuristic arguments, they predicted that the necessary and sufficient 
condition for power iteration to succeed is $\beta\gg n^{(k-2)/2}$, and for tensor unfolding  is $\beta\gg n^{(k-2)/4}$.
 Langevin dynamics and gradient descent were studied in \cite{arous2020algorithmic}, and shown to recover the signal provided $\beta\gg n^{(k-2)/2}$. Later the sharp threshold $\beta\gg n^{(k-2)/4}$ is achieved using Sum-of-Squares algorithms \cite{hopkins2015tensor, hopkins2016fast,kim2017community} and sophisticated iteration algorithms \cite{luo2020sharp, zhang2018tensor}. 
The necessary part of this threshold still remains open, and its relation with hypergraphic planted clique problem was discussed in \cite{luo2020open}.

\textbf{Statistical inferences. }
In many applications, it is often the
case that the ultimate goal is not to characterize the  
$L_2$ or ``bulk" behavior (e.g. the mean squared estimation
error) of the signals, but rather to reason about the signals along a few preconceived yet
important directions. 
In the example of community detecting for hypergraphs, the entries of the vector $\bmv$ can represent different community memberships. The testing of whether any two nodes belong to the same community is reduced to the hypothesis testing problem of whether the corresponding entries of $\bmv$ are equal.
 These problems can
be formulated as estimation and inference for linear functionals of a signal, namely, quantities of the
form $\langle \bma, \bmv_j\rangle$, $1\leq j\leq r$ with a  prescribed vector $\bma$.
A natural starting point is to plug in an estimator $\widehat\bmv_j$ of $\bmv_j$, i.e. the estimator $\langle \bma, \widehat \bmv_j\rangle$. However, a most prior works \cite{NIPS2014_5616, hopkins2015tensor, hopkins2016fast,kim2017community,luo2020sharp, zhang2018tensor} on spiked tensor models focuses on the $L_2$ risk analysis, which
is often too coarse to give tight uncertainty bound for the plug-in estimator.
To further complicate matters, there is often a  bias
issue surrounding the plug-in estimator. Addressing
these issues calls for refined risk analysis of the algorithms.

\subsection{Our Contributions}

We consider the power iteration algorithm given by the following recursion
\begin{align}\label{e:power}
\bmu_0=\bmu,\quad \bmu_{t+1}=\frac{\bmX[\bmu_{t}^{\otimes (k-1)}]}{\|\bmX[\bmu_{t}^{\otimes (k-1)}]\|_2}
\end{align}
where $\bmu\in \bR^n$ with $\|\bmu\|_2=1$ is the initial vector, and $\bmX[\bmv^{\otimes(k-1)}]\in \bR^n$ is the vector with $i$-th entry given by $\langle \bmX, \bme_i\otimes \bmv^{\otimes(k-1)}\rangle$.
The estimators are given by
\begin{align}
\widehat \bmv= \bmu_T,
\quad
\widehat \beta= \langle \bmX, \widehat\bmv^{\otimes k}\rangle.
\end{align}
for some large $T$. 
Although in a worst case scenario, i.e. with random initialization, power iteration algorithm underperforms tensor unfolding.  However, if extra information about the signals $\bmv_j$ is available, power iteration algorithm with a warm start can be used to obtain a much better estimator. In fact this approach is commonly used to obtain refined estimators. In this paper, we study the convergence and statistical inference aspects of  the power iteration algorithm. The main contributions of this paper are summarized below,

\textbf{Convergence criterion}.
We give necessary and sufficient conditions for the convergence of the power iteration algorithm. In the rank one case $r=1$, we show that the power iteration algorithm converges to the true signal $\bmv$, provided $|\beta\langle \bmu, \bmv \rangle^{k-2}|\gg 1 $ where $\bmu$ is the initialization vector. In the complementary setting, if $|\beta\langle \bmu, \bmv \rangle^{k-2}|\ll 1$, the output of the power iteration algorithm behaves like random Gaussian vectors, and has no correlations with the signal.  With random initialization, i.e. $\bmu$ is a uniformly random vector on the unit sphere, our results assert that the power iteration algorithm converges in finite time, if and only if $\beta\gg n^{(k-2)/2}$, which verifies the prediction in \cite{NIPS2014_5616}.
This is analogous to the PCA of spiked matrix model, where power iteration recovers the top eigenvalue.  However,  the multi-rank spiked tensor model, i.e. $r\geq 2$, is  different from multi-rank spiked matrix.  The power iteration algorithm for multi-rank spiked tensor model is more sensitive to the initialization, i.e.
the power iteration algorithm converges if $\max_j |\beta_j\langle \bmu, \bmv_j \rangle^{k-2}|\gg 1$. In this case, it converges to $\bmv_{j_*}$ with $j_*=\argmax_j |\beta_j\langle \bmu, \bmv_j \rangle^{k-2}|$.

\textbf{Statistical inference}
We consider the statistical inference problem for the spiked tensor model.
We develop the limiting distributions of the above power iteration estimators. In the rank one case, above the threshold $|\beta\langle \bmu, \bmv \rangle^{k-2}|\gg 1$, we show that our estimator $\langle   \bma, \widehat \bmv\rangle$ (modulo some global sign) admits the following first order approximation
\begin{align*}
\langle   \bma, \widehat \bmv\rangle\approx \left(1-\frac{1}{2\beta^2}\right)\langle\bma, \bmv\rangle+\frac{\langle \bma^\perp, \bmxi\rangle}{\beta},
\end{align*}
where $\bma^\perp=\bma-\langle \bma, \bmv\rangle\bmv$, and $\bmxi=\bmZ[\bmv^{\otimes(k-1)}]$, is an $n$-dim vector, with each entry i.i.d. $\cN(0,1/n)$ Gaussian random variables. For multi-rank spiked tensor model, 
the output of power iteration algorithm depends on the angle between the initialization $\bmu$ and the signals $\bmv_j$. We consider the case the initialization $\bmu$ is a uniformly random vector on the unit sphere. For such initialization, very interestingly, our estimator $\langle   \bma, \widehat \bmv\rangle$ is asymptotically a mixture of
Gaussian, with modes at $\langle \bma, \bmv_j\rangle$ and mixture weights depending on the signal strength $\beta_j$. 
Using these asymptotic results of our estimators, we
construct valid
and efficient confidence intervals for the linear functionals $\langle \bma, \bmv_j\rangle$.

\subsection{Notations:}
For a vector $\bmv\in \bR^n$, we denote its $i$-th coordinate as $\bmv(i)$.
We equate $k$-th order tensors in $\otimes^k \bR^n$ with  vectors of dimension $n^k$, i.e. $\bm\tau=(\bm\tau_{i_1 i_2\cdots i_k})_{1\leq i_1, i_2,\cdots, i_k\leq n}$. 
For any two $k$-th order tensors $\bm\tau, \bm\eta\in \otimes^{k}\bR^n$, we denote their inner product as $\langle \bmtau, \bm\eta\rangle\deq\sum_{1\leq i_1,i_2,\cdots, i_k\leq n}\bm\tau_{i_1 i_2\cdots i_k}\bm\eta_{i_1 i_2\cdots i_k}$.
A $k$-th order tensor can act on a $(k-1)$-th order tensor, and return a vector: $\bm\tau\in \otimes^k \bR^n$ and $\bm\eta\in \otimes^{k-1} \bR^n$
\begin{align}
\bm\tau[\bm\eta]\in \bR^n, \quad \bm\tau[\bm\eta](i)=\langle \bm\tau,\bme_i\otimes \bm\eta\rangle=\sum_{1\leq i_1,\cdots, i_{k-1}\leq n}\bm\tau_{i i_1 i_2\cdots i_{k-1}}\bm\eta_{i_1 i_2\cdots i_{k-1}}.
\end{align}
We denote the $L_2$ norm of a vector $\bmv$ as $\|\bmv\|$. 
We use $\stackrel{\text{d}}{=}$ for the equality in law, and $\xrightarrow{\text{d}}$ for the convergence in law.
We denote the index sets $\qq{a,b}=\{a,a+1,a+2,\cdots, b\}$ and $\qq{n}=\{1,2,3,\cdots, n\}$.
We use $C$ to represent large universal constant, and $c$ a small universal constant,
which
may be different from line by line. We write that $X = \OO(Y )$ if there exists some universal constant such
that $|X| \leq C Y$ . We write $X = \oo(Y )$ if the ratio $|X|/Y\rightarrow \infty$ as $n$ goes to infinity. We write
$X\asymp Y$ if there exist universal constants such that $c Y \leq |X| \leq  CY$.
We say an event holds with high probability, if for there exists $c>0$, and $n$ large
enough, the event holds with probability at least $1-n^{-c\log n}$.

An outline of the paper is given as follows. In Section \ref{se:rank1}, we state our main results for the rank-one spiked tensor model. In particular,  with general initialization a distributional result for the power iteration
algorithm is developed. Section \ref{se:rankr} investigates the general rank-$r$ spiked tensor model. A similar distributional result is established with general initialization as in Section \ref{se:rank1}. While with uniformly distributed
initialization over the unit sphere, we obtain a multinoimal distribution which yields a mixture Gaussian.  
Numerical simulations are presented in Section \ref{s:numerical}.
All proofs and technical details are deferred to the
appendix.

%
%
%
%
%
%
%

\noindent\textbf{Acknowledgement.}
The research collaboration was initiated when both G.C. and J.H. were warmly hosted by IAS in the special year of Deep Learning Theory.
The research of J.H. is supported by the Simons Foundation as a Junior Fellow
at the Simons Society of Fellows.

\section{Main Results}
\subsection{Rank one spiked tensor model}\label{se:rank1}

In this section, we state our main results for the rank-one spiked tensor model (corresponding to $r=1$ in \eqref{e:rankr}):
\begin{align}
\bmX=\beta \bmv^{\otimes k}+\bmZ,
\end{align}
where
\begin{itemize}
\item $\bmX\in \otimes^k \bR^n$ is the $k$-th order tensor observation.
\item $\bmZ\in \otimes^k \bR^n$ is a noise tensor. The entries of $\bmZ$ are i.i.d. standard $\cN(0,1/n)$ Gaussian random variables.
\item $\beta\in \bR$ is the signal size.
\item $\bmv\in \bR^n$ is an unknown unit vector to be recovered.
\end{itemize}

We obtain a distributional result for  the power iteration algorithm \eqref{e:power} with general initialization $\bmu$: when $|\beta|$ is above certain threshold, $\bmu_t$ converges to $\bmv$, and the error is asymptotically Gaussian; when $|\beta|$ is below the same threshold, the algorithm does not converge. 

\begin{theorem}\label{t:main}
Fix the initialization $\bmu\in \bR^n$ with $\|\bmu\|_2=1$ and $\langle \bmu, \bmv\rangle \gtrsim 1/\sqrt n$.
If $|\beta\langle \bmu, \bmv\rangle^{k-2}|\geq n^{\varepsilon}$ with arbitrarily small $\varepsilon>0$, the behavior of the power iteration algorithm depends on the parity of $k$ and the sign of $\beta$ in the following sense:
\begin{enumerate}
\item \label{c:case1} If $k$ is odd, and $\beta>0$  then $(\bmX[\bmu_t^{\otimes k}], \bmu_{t})$ converges to  $(\beta, \bmv)$;
\item \label{c:case2}If $k$ is odd, and $\beta<0$ then $(\bmX[\bmu_t^{\otimes k}], \bmu_{t})$ converges to  $(-\beta, -\bmv)$;
\item \label{c:case3}If $k$ is even, and $\beta>0$, then $(\bmX[\bmu_t^{\otimes k}], \bmu_{t})$ converges to $(\beta, \sgn(\langle\bmu, \bmv\rangle)\bmv)$ depending on the initialization $\bmu$;
\item \label{c:case4}If $k$ is even, and $\beta<0$, then $(\bmX[\bmu_t^{\otimes k}], \bmu_{t})$ does not converge, but instead alternates between $(\beta, \sgn(\langle\bmu, \bmv\rangle)\bmv)$ and $(\beta, -\sgn(\langle\bmu, \bmv\rangle)\bmv)$.
\end{enumerate}

In Case \ref{c:case1}, for any fixed unit vector $\bma\in \bR^n$, and
\begin{align}
T\geq 1+\frac{1}{\varepsilon}\left(\frac{1}{2}+\frac{2\log|\beta|}{\log n}\right),
\end{align}
with probability $1-\OO(n^{-c(\log n)^2})$, the estimators $\widehat \bmv= \bmu_T$,
and 
$\widehat \beta= \bmX[\widehat\bmv^{\otimes k}]$ satisfies
\begin{align}\begin{split}\label{e:clt}
\langle \bma, \widehat\bmv\rangle=\langle \bma, \bmu_{T}\rangle&=\left(1-\frac{1}{2\beta^2}\right)\langle\bma, \bmv\rangle+\frac{\langle \bma, \bmxi\rangle-\langle \bma, \bmv\rangle\langle \bmv, \bmxi\rangle}{\beta}\\
&+\OO\left(\frac{\log n}{\beta^2\sqrt n}+\frac{(\log n)^{3/2}}{\beta^{3/2}n^{3/4}}+\frac{|\langle \bma, \bmv\rangle|}{\beta^4}\right),
\end{split}\end{align}
where $\bmxi=\bmZ[\bmv^{\otimes(k-1)}]$, is an $n$-dim vector, with each entry i.i.d. $\cN(0,1/n)$ Gaussian random variable. And
\begin{align}\label{e:betaclt}
\widehat \beta=\bmX[\bmu_T^{\otimes k}]=
\beta+\langle \bm\xi, \bmv\rangle -\frac{k/2-1}{\beta}+\OO\left(\frac{\log n}{|\beta| \sqrt n}+\frac{(\log n)^{3/2}}{|\beta|^{1/2}n^{3/4}}+\frac{1}{|\beta|^3}\right).
\end{align}
Under the same assumption, we have similar results for Cases \ref{c:case2}, \ref{c:case3}, \ref{c:case4}, by simply changing $(\beta, \bmv)$
in the righthand side of \eqref{e:clt} and \eqref{e:betaclt} to the corresponding limit.
\end{theorem}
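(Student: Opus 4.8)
\medskip
\noindent\textbf{Proof strategy for Theorem~\ref{t:main}.}
The plan is to reduce the vector recursion~\eqref{e:power} to a scalar dynamical system for the overlap, driven by a noise term that eventually collapses to a single Gaussian slice of $\bmZ$. Write $\alpha_t\deq\langle\bmu_t,\bmv\rangle$ and decompose $\bmu_t=\alpha_t\bmv+\beta_t\bmw_t$ with $\bmw_t\perp\bmv$, $\|\bmw_t\|=1$, $\alpha_t^2+\beta_t^2=1$. From the exact identity $\bmX[\bmu_t^{\otimes(k-1)}]=\beta\alpha_t^{k-1}\bmv+\bmr_t$ with $\bmr_t\deq\bmZ[\bmu_t^{\otimes(k-1)}]$, splitting $\bmr_t=\langle\bmr_t,\bmv\rangle\bmv+\bmr_t^\perp$ and normalizing, \eqref{e:power} becomes
\begin{equation}\label{e:ps-scalar}
\alpha_{t+1}=\frac{s_t+\langle\bmr_t,\bmv\rangle}{\sqrt{(s_t+\langle\bmr_t,\bmv\rangle)^2+\|\bmr_t^\perp\|^2}},\qquad
\beta_{t+1}\bmw_{t+1}=\frac{\bmr_t^\perp}{\sqrt{(s_t+\langle\bmr_t,\bmv\rangle)^2+\|\bmr_t^\perp\|^2}},\qquad s_t\deq\beta\alpha_t^{k-1}.
\end{equation}
Thus everything hinges on three quantities: $\langle\bmr_t,\bmv\rangle$, $\|\bmr_t^\perp\|$, and the discrepancy $\bmr_t-\bmxi$, where $\bmxi=\bmZ[\bmv^{\otimes(k-1)}]$.

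The heart of the argument is controlling these noise quantities, which is delicate because $\bmu_t$ is a nonlinear function of $\bmZ$, so $\bmr_t=\bmZ[\bmu_t^{\otimes(k-1)}]$ is genuinely non-Gaussian and one cannot simply condition. I would (i) bound the injective norm $\sup_{\|\bmu\|=1}\|\bmZ[\bmu^{\otimes(k-1)}]\|\lesssim(\log n)^{O(1)}$ by an $\epsilon$-net argument, which also yields a uniform Lipschitz estimate for $\bmu\mapsto\bmZ[\bmu^{\otimes(k-1)}]$; (ii) track the low-dimensional subspace in which the early iterates lie and use Gaussian conditioning together with moment estimates to prove $\langle\bmr_t,\bmv\rangle=\OO((\log n)^{O(1)}/\sqrt n)$ and $\|\bmr_t^\perp\|=1+\OO((\log n)^{O(1)}/\sqrt n)$ at every step; and (iii), once $\bmu_t$ is within $\OO(1/|\beta|)$ of $\pm\bmv$, use the first-order expansion
\begin{equation}\label{e:ps-exp}
\bmZ[\bmu_t^{\otimes(k-1)}]=\alpha_t^{k-1}\bmxi+(k-1)\alpha_t^{k-2}\beta_t\,\bmZ[\bmv^{\otimes(k-2)}\otimes\bmw_t]+\OO\big(\beta_t^2(\log n)^{O(1)}\big),
\end{equation}
in which $\langle\bmxi,\bmv\rangle=\OO((\log n)^{O(1)}/\sqrt n)$, $\|\bmxi\|=1+\OO((\log n)^{O(1)}/\sqrt n)$, and the middle term paired with a fixed unit $\bma$ is again $\OO((\log n)^{O(1)}/\sqrt n)$ by conditional Gaussianity, since the slice determining $\bmw_t\approx\bmxi^\perp/\|\bmxi^\perp\|$ is nearly independent of the slices appearing there. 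Running all deviations at the $(\log n)^{O(1)}$ scale is what yields the probability $1-\OO(n^{-c(\log n)^2})$. This step is the main obstacle.

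Given these bounds, the scalar map~\eqref{e:ps-scalar} is analysed in two phases. In the burn-in phase $|s_t|\leq1$, so $\|\bmr_t^\perp\|\asymp1$ controls the denominator and $\alpha_{t+1}=\beta\alpha_t^{k-1}+\OO((\log n)^{O(1)}/\sqrt n)$; since $|\beta\alpha_0^{k-2}|\geq n^\varepsilon$ and $\alpha_0\gtrsim1/\sqrt n$, the additive noise is negligible against the signal, the multiplier $|\alpha_{t+1}/\alpha_t|\approx|\beta\alpha_t^{k-2}|\geq n^\varepsilon$ only grows, and $|\alpha_t|\asymp1$ is reached in $\OO(\log(1/\varepsilon))$ steps. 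In the convergence phase $|s_t|\geq n^{\varepsilon/2}$, and expanding $(1+\|\bmr_t^\perp\|^2/(s_t+\langle\bmr_t,\bmv\rangle)^2)^{-1/2}$ gives
\begin{equation}\label{e:ps-contract}
\alpha_{t+1}=\sgn(s_t)\Big(1-\frac{\|\bmr_t^\perp\|^2}{2s_t^2}+\OO(s_t^{-4})\Big),
\end{equation}
which, with $\|\bmr_t^\perp\|^2=1+\OO((\log n)^{O(1)}/\sqrt n)$ and $s_t=\beta\alpha_t^{k-1}$, is a contraction with rate $\OO(\beta^{-2})=\OO(n^{-2\varepsilon})$ toward the fixed point $\sgn(\beta)\big(1-\tfrac1{2\beta^2}+\OO(\beta^{-4})+\OO((\log n)^{O(1)}/(\beta^2\sqrt n))\big)$ (up to sign); the stated lower bound on $T$ guarantees $\alpha_T$ is within the error budget. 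The parity/sign dichotomy is precisely the sign of $s_t=\beta\alpha_t^{k-1}$: for $k$ odd $\sgn(s_t)=\sgn(\beta)$ once $\alpha_t$ has the sign of $\beta$, giving Cases~\ref{c:case1}--\ref{c:case2}; for $k$ even $\alpha_t^{k-1}$ carries the sign of $\alpha_t$, so for $\beta>0$ the sign of $\alpha_0$ is frozen (Case~\ref{c:case3}) while for $\beta<0$ the map negates $\alpha_{t+1}$ at each step, producing the period-two oscillation of Case~\ref{c:case4}.

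Finally I would assemble. The transverse recursion in~\eqref{e:ps-scalar} gives $\beta_{t+1}\bmw_{t+1}=\bmr_t^\perp/|s_t|$ up to lower-order terms, and inserting~\eqref{e:ps-exp} with $\bmu_t$ already $\OO(1/|\beta|)$-close to $\pm\bmv$ yields $\beta_T\bmw_T=\bmxi^\perp/\beta+\OO(\beta^{-2}(\log n)^{O(1)})$, $\bmxi^\perp\deq\bmxi-\langle\bmxi,\bmv\rangle\bmv$; combined with $\alpha_T=1-\tfrac1{2\beta^2}+(\text{errors})$ this gives $\bmu_T=(1-\tfrac1{2\beta^2})\bmv+\tfrac1\beta\bmxi^\perp+(\text{errors})$, so pairing with $\bma$ produces~\eqref{e:clt}, the three displayed error terms coming from the $(k-1)\beta^{-2}\bmZ[\bmv^{\otimes(k-2)}\otimes\bmw_T]$-correction paired with $\bma$, the residual non-convergence of $\alpha_T$, and the higher-order transverse fluctuations respectively. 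For $\widehat\beta=\langle\bmX,\bmu_T^{\otimes k}\rangle=\beta\alpha_T^k+\langle\bmZ,\bmu_T^{\otimes k}\rangle$, expanding $\beta\alpha_T^k=\beta-\tfrac{k}{2\beta}+\OO(\beta^{-3})$ and, via $\bmu_T^{\otimes k}\approx\bmv^{\otimes k}+\tfrac1\beta\sum_{j=0}^{k-1}\bmv^{\otimes j}\otimes\bmxi^\perp\otimes\bmv^{\otimes(k-1-j)}$, $\langle\bmZ,\bmu_T^{\otimes k}\rangle=\langle\bmxi,\bmv\rangle+\tfrac{\|\bmxi^\perp\|^2}{\beta}+(\text{errors})=\langle\bmxi,\bmv\rangle+\tfrac1\beta+(\text{errors})$, and summing yields $\beta+\langle\bmxi,\bmv\rangle-\tfrac{k/2-1}{\beta}+(\text{errors})$, i.e.~\eqref{e:betaclt}. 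Cases~\ref{c:case2}--\ref{c:case4} follow verbatim after replacing $(\beta,\bmv)$ by the corresponding limit. The single genuinely hard point, as noted, is the uniform and sharp control of $\bmZ[\bmu_t^{\otimes(k-1)}]$ along the whole trajectory that isolates $\bmxi=\bmZ[\bmv^{\otimes(k-1)}]$ as the only surviving randomness.
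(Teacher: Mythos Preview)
Your high-level strategy matches the paper's: isolate the overlap dynamics, control the noise slice $\bmZ[\bmu_t^{\otimes(k-1)}]$ along the trajectory, and expand around the fixed point so that $\bmxi=\bmZ[\bmv^{\otimes(k-1)}]$ is the only surviving randomness. Your final expansions for \eqref{e:clt} and \eqref{e:betaclt} are correct.

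Where you diverge is the execution of the step you correctly flag as the hard one. The $\epsilon$-net injective-norm bound in (i) buys only $\|\bmZ[\bmu_t^{\otimes(k-1)}]\|=\OO(1)$ uniformly, which is far too coarse for the $\OO(\log n/\sqrt n)$ control of $\langle\bmr_t,\bmv\rangle$ you need in the burn-in; the paper does not use it. For (ii) you write ``track the low-dimensional subspace in which the early iterates lie and use Gaussian conditioning'', but the decisive point is that this subspace must be taken in the \emph{$(k-1)$-tensor space} $\otimes^{k-1}\bR^n$, not in $\bR^n$. Concretely, the paper works with the unnormalized iterates $\bmy_{t+1}=\bmX[\bmy_t^{\otimes(k-1)}]$, runs Gram--Schmidt on $\bmv^{\otimes(k-1)},\bmy_0^{\otimes(k-1)},\bmy_1^{\otimes(k-1)},\ldots$ inside $\otimes^{k-1}\bR^n$ to produce orthonormal tensors $\bmv^{\otimes(k-1)},\bmtau_0,\bmtau_1,\ldots$, and uses the Gaussian conditioning lemma (for orthonormal $\bmtau_s$ the vectors $\bmZ[\bmtau_s]$ are jointly independent with i.i.d.\ $\cN(0,1/n)$ entries) to peel off at step $t$ a \emph{fresh} independent Gaussian $\bmxi_{t+1}=\bmZ[\bmtau_t]$. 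This yields an exact representation $\bmy_t=a_t\bmv+b_{t0}\bmxi+\sum_{s<t}b_{ts}\bmxi_s+c_t\bmxi_t$, and the entire noise control collapses to an induction on the scalar coefficients $(a_t,b_{ts},c_t)$ together with standard concentration for the genuinely independent $\bmxi_s$. Your expansion \eqref{e:ps-exp} is the one-step linearization of this decomposition near the fixed point; what is missing from your outline is the finite-$t$, non-perturbative version that carries you through the burn-in phase where $\bmu_t$ is still far from $\pm\bmv$ and a Taylor expansion around $\bmv^{\otimes(k-1)}$ is unavailable.
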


\begin{theorem} \label{t:diverge}
Fix the initialization $\bmu\in \bR^n$ with $\|\bmu\|_2=1$.
If $|\beta|\geq n^{\varepsilon}$ and $|\beta\langle \bmu, \bmv\rangle^{k-2}|\leq n^{-\varepsilon}$ with arbitrarily small $\varepsilon>0$, then $\bmu_{t}$ does not converge to $\pm\bmv$, and $\bmu_t$ behaves like a random Gaussian vector. For
\begin{align}
T\geq 1+\frac{1}{\varepsilon}\left(\frac{1}{2}-\frac{\log|\beta|}{(k-2)\log n}\right)
\end{align}
with probability $1-\OO(n^{-c(\log n)^2})$, it holds
\begin{align}
\widehat\bmv=\bmu_T=\frac{\tilde {\bm\xi}}{\|\tilde {\bm\xi}\|_2}+\OO\left(|\beta|\left(\frac{\log n}{\sqrt n}\right)^{k-1}\right),
\end{align}
where $\tilde{\bm\xi}$ is the standard Gaussian vector in $\bR^n$, the error term is a vector of length bounded by $|\beta|(\log n/\sqrt n)^{k-1}$.

\end{theorem}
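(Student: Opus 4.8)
The plan is to follow the trajectory $\bmu_0,\bmu_1,\dots$ and control two families of quantities along it: the signal correlation $\lambda_t:=\langle\bmu_t,\bmv\rangle$, and the pairwise correlations $\langle\bmu_s,\bmu_t\rangle$ for $s\ne t$. Write $\bm{g}_t:=\bmZ[\bmu_t^{\otimes(k-1)}]$, so the unnormalized update is $\bmX[\bmu_t^{\otimes(k-1)}]=\beta\lambda_t^{k-1}\bmv+\bm{g}_t$ and $\bmu_{t+1}=(\beta\lambda_t^{k-1}\bmv+\bm{g}_t)/N_t$, $N_t:=\|\beta\lambda_t^{k-1}\bmv+\bm{g}_t\|$. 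The structural observation I would rely on is that, viewing $\bmZ$ as a Gaussian vector in $\otimes^k\bR^n$, the linear map $\bmZ\mapsto\bm{g}_t$ is (an isometry composed with) the orthogonal projection onto the $n$-dimensional subspace $W_t:=\spn\{\bme_i\otimes\bmu_t^{\otimes(k-1)}:1\le i\le n\}$; that for $s\ne t$ the operator $P_{W_s}P_{W_t}$ has operator norm exactly $|\langle\bmu_s,\bmu_t\rangle|^{k-1}$; and that $\bmu_t$ and $W_t$ are measurable with respect to $P_{V_{t-1}}\bmZ$, where $V_{t-1}:=W_0+\cdots+W_{t-1}$ has dimension $\OO(nt)$. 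In other words $\bmu_t$ is built from a very thin slice of the noise, and the game is to exploit this so that the next contraction $\bm{g}_t$ is, up to negligible error, a fresh Gaussian vector.

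\emph{Step 1 (the signal is shed).} By induction on $t\le T$, on an event of probability $1-\OO(n^{-c(\log n)^2})$, I would establish that $|\beta\lambda_t^{k-2}|\le n^{-\varepsilon}$, that $N_t=1+\oo(1)$, and that $|\lambda_{t+1}|\le\max\bigl(n^{-\varepsilon}|\lambda_t|,\ C(\log n)^{3/2}/\sqrt n\bigr)$. Here the signal part of the numerator has norm $|\beta\lambda_t^{k-1}|=|\beta\lambda_t^{k-2}|\,|\lambda_t|\le n^{-\varepsilon}|\lambda_t|$, while $\langle\bm{g}_t,\bmv\rangle=\langle\bmZ,\bmv\otimes\bmu_t^{\otimes(k-1)}\rangle$ is bounded by conditioning on $P_{V_{t-1}}\bmZ$ (which fixes $\bmu_t$): the part of $\bmv\otimes\bmu_t^{\otimes(k-1)}$ lying in $V_{t-1}$ has norm $\OO(\sqrt t\,((\log n)^{3/2}/\sqrt n)^{k-1})$ by Step 2, and the orthogonal, genuinely Gaussian part contributes $\OO((\log n)^{3/2}/\sqrt n)$ at the stated probability; $N_t=1+\oo(1)$ is a $\chi^2$-concentration for $\|\bm{g}_t\|^2$ modulo the same small cross-terms. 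Iterating, starting from the admissible size $|\lambda_0|^{k-2}\le n^{-\varepsilon}/|\beta|$, one reaches $|\lambda_t|\le C(\log n)^{3/2}/\sqrt n$ after $1+\varepsilon^{-1}\bigl(\tfrac12-\log|\beta|/((k-2)\log n)\bigr)$ steps — the geometric factor $n^{-\varepsilon}$ per step gives the $\varepsilon^{-1}$, the starting exponent gives the $\log|\beta|$ correction, the additive constant absorbs the logarithmic floor — so in particular $|\lambda_{T-1}|\le C(\log n)^{3/2}/\sqrt n$.

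\emph{Step 2 (the trajectory spreads out and the last noise vector is Gaussian).} Simultaneously with Step 1 I would prove $|\langle\bmu_s,\bmu_t\rangle|\le C(\log n)^{3/2}/\sqrt n$ for all $0\le s<t\le T$. Since $\bmu_t=\bm{g}_{t-1}/\|\bm{g}_{t-1}\|+\OO(|\beta|\,|\lambda_{t-1}|^{k-1})$, this reduces to bounding $\langle\bm{g}_{t-1},\bmu_s\rangle=\langle\bmZ,\bmu_s\otimes\bmu_{t-1}^{\otimes(k-1)}\rangle$, handled exactly as $\langle\bm{g}_t,\bmv\rangle$ above; the base case is immediate because $\bmu_0$ is deterministic, so $\bm{g}_0$ is exactly an $\cN(0,I_n/n)$ vector. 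Apply this at $t=T-1$ with $V:=W_0+\cdots+W_{T-2}$ and split $\bm{g}_{T-1}=P_{W_{T-1}}P_V\bmZ+P_{W_{T-1}}(I-P_V)\bmZ$. Using that the tensors $\bmu_s^{\otimes(k-1)}$ are almost orthonormal, $\|P_{W_{T-1}}P_V\|_{\mathrm{op}}=\OO(\sqrt T((\log n)^{3/2}/\sqrt n)^{k-1})$, so the first piece has norm $\OO(T((\log n)^{3/2}/\sqrt n)^{k-1})$; conditionally on $P_V\bmZ$ (hence on $\bmu_{T-1},W_{T-1}$) the second piece is centered Gaussian with covariance within $\OO(T((\log n)^{3/2}/\sqrt n)^{2(k-1)})$ of $n^{-1}I$ on $W_{T-1}$, so it can be coupled with a genuine $\cN(0,n^{-1}I_n)$ vector $\tilde{\bm\xi}/\sqrt n$ (independent of $\bmu_{T-1}$, hence $\cN(0,I_n)$ unconditionally) with $\|\bm{g}_{T-1}-\tilde{\bm\xi}/\sqrt n\|$ at most polylog times $((\log n)/\sqrt n)^{k-1}$. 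Since $\bmu_T=(\beta\lambda_{T-1}^{k-1}\bmv+\bm{g}_{T-1})/N_{T-1}$ and $\|\beta\lambda_{T-1}^{k-1}\bmv\|=|\beta|\,|\lambda_{T-1}|^{k-1}\le|\beta|((\log n)/\sqrt n)^{k-1}$ dominates the Gaussianization error precisely because $|\beta|\ge n^\varepsilon$, while $N_{T-1}=1+\oo(1)$ and $\bmw\mapsto\bmw/\|\bmw\|$ is $\OO(1)$-Lipschitz near the unit sphere, we get $\bmu_T=\tilde{\bm\xi}/\|\tilde{\bm\xi}\|+\OO(|\beta|((\log n)/\sqrt n)^{k-1})$; non-convergence to $\pm\bmv$ follows from $\lambda_t\to0$.

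\emph{Main obstacle.} The heart is the coupled induction of Steps 1--2 — showing that the $\bmZ$-dependent vector $\bmu_{T-1}$ really does use only an $\OO(nT)$-dimensional slice of the noise and stays almost orthogonal to every earlier iterate, so that $\bmZ[\bmu_{T-1}^{\otimes(k-1)}]$ is essentially independent of everything that determines $\bmu_{T-1}$ and hence essentially Gaussian. Executing this requires peeling off the used randomness $P_{V_{t-1}}\bmZ$ at each step, keeping all pairwise overlap operators $P_{W_s}P_{W_t}$ quantitatively controlled, and carrying the errors through $T=\OO(1/\varepsilon)$ iterations at probability $1-\OO(n^{-c(\log n)^2})$; the scalar inputs (Gaussian tails, $\chi^2$-concentration, bounds on low-degree polynomial contractions of $\bmZ$) are routine, but the simultaneous bookkeeping for $\lambda_t$, the Gram correlations, the norms $N_t$, and the signal errors is where the work lies.
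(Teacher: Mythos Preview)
Your approach is correct and essentially the same as the paper's: both condition on the $\OO(nt)$-dimensional slice of $\bmZ$ used by $\bmu_0,\ldots,\bmu_{t-1}$ to render the next noise contraction conditionally Gaussian, and both track the geometric decay of the signal correlation down to the floor $\OO(\log n/\sqrt n)$, yielding the same time bound. The bookkeeping differs only cosmetically --- the paper Gram--Schmidt-orthonormalizes the tensors $\bmv^{\otimes(k-1)},\bmy_0^{\otimes(k-1)},\ldots,\bmy_{t-1}^{\otimes(k-1)}$ so that each fresh piece $\bmxi_s=\bmZ[\bmtau_s]$ is \emph{exactly} $\cN(0,I_n/n)$ conditionally (hence no coupling is needed at the end) and carries explicit coefficients $(a_t,\{b_{ts}\},c_t)$ of the unnormalized iterate $\bmy_t$, whereas you track $\lambda_t$ and the pairwise correlations $\langle\bmu_s,\bmu_t\rangle$ directly and invoke $\|P_{W_s}P_{W_t}\|_{\mathrm{op}}=|\langle\bmu_s,\bmu_t\rangle|^{k-1}$; these are equivalent parameterizations of the same conditioning argument.
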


%
%

In Theorem \ref{t:main}, we assume that $\langle \bmu, \bmv\rangle \gtrsim 1/\sqrt n$, which is generic and is true for a random $\bmu$. Moreover, if the initial vector $\bmu$ is random, then $|\langle \bmu, \bmv\rangle|\asymp n^{-1/2}$. Notably, Theorems \ref{t:main} and \ref{t:diverge} together state that power iteration recovers $\bmv$ if $|\beta|\gg n^{(k-2)/2}$ and fails if $|\beta|\ll n^{(k-2)/2}$. This gives a rigorous proof of the prediction in \cite{NIPS2014_5616} that the necessary and sufficient condition for the convergence is given by  $|\beta|\gtrsim n^{(k-2)/2}$. In practice, it may be possible to use domain
knowledge to choose better initialization points. For example, in the classical topic modeling applications \cite{Ana2}, the unknown vectors $\bmv$ are related to the topic word distributions, and
many documents may be primarily composed of words from just single topic. Therefore,
good initialization points can be derived from these single-topic documents.

The special case for $k=2$, i.e. the spiked matrix model,  has been intensively studied since the pioneer work of Johnstone \cite{johnstone2001distribution}.
In this setting it is known [30] that
there is an order $\OO(1)$ critical signal-to-noise ratio, such that below the threshold, it is information-theoretically
impossible to recover $\bmv$, and above the threshold, the PCA (partially) recovers the unseen eigenvector $\bmv$
\cite{peche2006largest,abbe2020entrywise, o2018random,vu2011singular, zhong2017eigenvector,chen2018asymmetry,zhang2018heteroskedastic, CYC20}. The special case of our results Theorem \ref{t:main} recovers some abovementioned results.

As a consequence of Theorem \ref{t:main}, we have the following central limit theorem for our estimators.
\begin{corollary}\label{coro1}(Central Limit Theorem)
Fix the initialization $\bmu\in \bR^n$ with $\|\bmu\|_2=1$ and $|\langle \bmu, \bmv\rangle| \gtrsim 1/\sqrt n$.
If $|\beta\langle \bmu, \bmv\rangle^{k-2}|\geq n^{\varepsilon}$ with arbitrarily small $\varepsilon>0$, in Case \ref{c:case1} of Theorem \ref{t:main}, for any fixed unit vector $\bma\in \bR^n$ obeying
\begin{align}\label{e:avbound}
|\langle\bma, \bmv \rangle|=\oo\left(\frac{\beta^3}{\sqrt n}\right),
\end{align}
 and time
\begin{align}
T\geq 1+\frac{1}{\varepsilon}\left(\frac{1}{2}+\frac{2\log|\beta|}{\log n}\right).
\end{align}
the estimators $\widehat \bmv= \bmu_T$,
and
$\widehat \beta= \bmX[\widehat\bmv^{\otimes k}]$ satisfies
\begin{equation}\label{coro:clt1}
\frac{\sqrt{n}\widehat\beta}{\sqrt{\langle \bma, (\bmI_n-\widehat\bmv \widehat\bmv^\top)\bma\rangle}}\left[\big(1-\frac{1}{2\widehat\beta^2}\big)^{-1}\langle \bma, \widehat\bmv\rangle-\langle\bma, \bmv\rangle\right]
\xrightarrow{d} \cN(0,1),
\end{equation}
as $n$ tends to infinity. We have similar results for Cases \ref{c:case2}, \ref{c:case3}, \ref{c:case4}, by simply changing $(\beta, \bmv)$
in \eqref{coro:clt1} to the corresponding limit.
\end{corollary}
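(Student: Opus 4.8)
The plan is to deduce Corollary \ref{coro1} directly from the first-order expansions \eqref{e:clt} and \eqref{e:betaclt} of Theorem \ref{t:main}; the key structural fact is that the leading stochastic term in \eqref{e:clt} is \emph{exactly} Gaussian, so the work amounts to (a) checking the remainders are negligible after the $\sqrt n$ scaling, and (b) showing that replacing the oracle normalization $(\beta,\bmv)$ by the data-driven $(\widehat\beta,\widehat\bmv)$ costs only a $1+\oo(1)$ factor, after which Slutsky's theorem closes the argument. First I would set $\bma^\perp\deq\bma-\langle\bma,\bmv\rangle\bmv$, so that the numerator of the stochastic term in \eqref{e:clt} becomes $\langle\bma,\bmxi\rangle-\langle\bma,\bmv\rangle\langle\bmv,\bmxi\rangle=\langle\bma^\perp,\bmxi\rangle$, and \eqref{e:clt} rearranges into
\begin{align*}
\left(1-\frac{1}{2\beta^2}\right)^{-1}\langle\bma,\widehat\bmv\rangle-\langle\bma,\bmv\rangle=\frac{\langle\bma^\perp,\bmxi\rangle}{\beta}\big(1+\OO(\beta^{-2})\big)+\OO\!\left(\frac{\log n}{\beta^2\sqrt n}+\frac{(\log n)^{3/2}}{\beta^{3/2}n^{3/4}}+\frac{|\langle\bma,\bmv\rangle|}{\beta^4}\right).
\end{align*}
Because Theorem \ref{t:main} tells us $\bmxi=\bmZ[\bmv^{\otimes(k-1)}]$ has i.i.d.\ $\cN(0,1/n)$ coordinates and $\langle\bma^\perp,\bmv\rangle=0$, the linear functional $\langle\bma^\perp,\bmxi\rangle$ is Gaussian with mean $0$ and variance $\|\bma^\perp\|^2/n$, where $\|\bma^\perp\|^2=1-\langle\bma,\bmv\rangle^2=\langle\bma,(\bmI_n-\bmv\bmv^\top)\bma\rangle$; hence $\sqrt n\,\langle\bma^\perp,\bmxi\rangle/\|\bma^\perp\|$ is exactly standard normal for every $n$, and no central limit theorem is needed for the leading term.

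Next I would pass from the oracle $(\beta,\bmv)$ to the data-driven $(\widehat\beta,\widehat\bmv)$ in the normalizer. By \eqref{e:betaclt}, $\widehat\beta=\beta+\langle\bmxi,\bmv\rangle+\oo(1)$, and since $|\langle\bmxi,\bmv\rangle|=\OO(\sqrt{\log n/n})$ with high probability while $\beta\geq n^\varepsilon$ (as $|\langle\bmu,\bmv\rangle|\leq1$ forces $\beta\geq|\beta\langle\bmu,\bmv\rangle^{k-2}|\geq n^\varepsilon$ in Case \ref{c:case1}), we get $\widehat\beta/\beta=1+\oo(1)$; by \eqref{e:clt}, $\langle\bma,\widehat\bmv\rangle=\langle\bma,\bmv\rangle+\oo(1)$, hence $\langle\bma,(\bmI_n-\widehat\bmv\widehat\bmv^\top)\bma\rangle=1-\langle\bma,\widehat\bmv\rangle^2=\|\bma^\perp\|^2(1+\oo(1))$ in the regime where $\|\bma^\perp\|$ is of constant order — the only regime in which the normalizing factor is nondegenerate. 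Multiplying the displayed identity by $\sqrt n\,\widehat\beta\big/\sqrt{\langle\bma,(\bmI_n-\widehat\bmv\widehat\bmv^\top)\bma\rangle}$ and collecting terms,
\begin{align*}
\frac{\sqrt n\,\widehat\beta}{\sqrt{\langle\bma,(\bmI_n-\widehat\bmv\widehat\bmv^\top)\bma\rangle}}\left[\left(1-\frac{1}{2\widehat\beta^2}\right)^{-1}\langle\bma,\widehat\bmv\rangle-\langle\bma,\bmv\rangle\right]=(1+\oo(1))\,\frac{\sqrt n\,\langle\bma^\perp,\bmxi\rangle}{\|\bma^\perp\|}+R_n,
\end{align*}
where $R_n$ gathers the remainders of \eqref{e:clt} scaled by $\sqrt n\,\widehat\beta/\|\bma^\perp\|\asymp\sqrt n\,\beta/\|\bma^\perp\|$, together with the error from swapping $(1-\tfrac{1}{2\widehat\beta^2})^{-1}$ for $(1-\tfrac{1}{2\beta^2})^{-1}$ (which, using $\widehat\beta-\beta=\oo(1)$, is $\OO(|\langle\bma,\bmv\rangle|\beta^{-4}+\beta^{-3}\sqrt{\log n/n})$ before scaling). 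The scaled remainders are of orders $\tfrac{\log n}{\beta\|\bma^\perp\|}$ and $\tfrac{(\log n)^{3/2}}{\beta^{1/2}n^{1/4}\|\bma^\perp\|}$, both $\oo(1)$ since $\beta\geq n^\varepsilon$, except for the term $\tfrac{\sqrt n\,|\langle\bma,\bmv\rangle|}{\beta^3\|\bma^\perp\|}$, which is $\oo(1)$ \emph{precisely by hypothesis \eqref{e:avbound}}; the swapping error is likewise $\oo(1)$ after scaling. Thus $R_n=\oo(1)$ with high probability, and Slutsky's theorem — applied on the high-probability event of Theorem \ref{t:main}, whose complement has vanishing probability — gives the claimed $\xrightarrow{d}\cN(0,1)$.

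The only real difficulty here is the bookkeeping of $R_n$: the single remainder $\OO(|\langle\bma,\bmv\rangle|/\beta^4)$ in \eqref{e:clt} becomes $\OO(\sqrt n\,|\langle\bma,\bmv\rangle|/\beta^3)$ once multiplied by the $\sqrt n$-scaling, and the assumption \eqref{e:avbound}, $|\langle\bma,\bmv\rangle|=\oo(\beta^3/\sqrt n)$, is tailored exactly to kill it; every other error term carries enough spare powers of $\beta\geq n^\varepsilon$ to vanish automatically, so there is no deeper obstruction. Finally, Cases \ref{c:case2}, \ref{c:case3}, \ref{c:case4} follow verbatim once $(\beta,\bmv)$ is replaced throughout by the corresponding limit supplied by Theorem \ref{t:main}: the expansions \eqref{e:clt}–\eqref{e:betaclt} transform accordingly, and $\bmxi$ changes at most by an overall sign, which alters neither its law nor that of $\langle\bma^\perp,\bmxi\rangle/\|\bma^\perp\|$.
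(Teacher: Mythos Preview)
Your proposal is correct and follows essentially the same approach as the paper: both arguments extract the Gaussian leading term $\langle\bma^\perp,\bmxi\rangle$ from \eqref{e:clt}, use \eqref{e:betaclt} to replace $(\beta,\bmv)$ by $(\widehat\beta,\widehat\bmv)$ at a $1+\oo(1)$ cost, and check that each scaled remainder vanishes, with \eqref{e:avbound} handling the $|\langle\bma,\bmv\rangle|/\beta^4$ term. Your write-up is in fact more explicit than the paper's about the role of Slutsky's theorem and the tracking of individual error terms, and your remark that $\|\bma^\perp\|$ must be of constant order for the normalizer to behave is a valid observation that the paper leaves implicit.
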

We remark that in Corollary \ref{coro1}, we assume that $|\langle\bma, \bmv \rangle|=\oo\left(\beta^3/\sqrt n\right)$, which is generic. For example, if $\bmv$ is delocalized, and $\bma$ is supported on finitely many entries, we will have that $|\langle\bma, \bmv \rangle|\lesssim 1/\sqrt n$, and \eqref{e:avbound} is satisfied.

With the central limit theorem for our estimators in Corollary \ref{coro1}, we can easily write down the confidence interval for our estimators.
\begin{corollary}\label{coro2}(Prediction Interval)
Given the asymptotic significance level $\alpha$, and let $z_\al=\Phi(1-\alpha/2)$ where $\Phi(\cdot)$ is the CDF of a standard Gaussian. If $|\beta\langle \bmu, \bmv\rangle^{k-2}|\geq n^{\varepsilon}$ with arbitrarily small $\varepsilon>0$, in Case \ref{c:case1} of Theorem \ref{t:main}, for any fixed unit vector $\bma\in \bR^n$ obeying
\begin{align}\label{e:avbound11}
|\langle\bma, \bmv \rangle|=\oo\left(\frac{\beta^3}{\sqrt n}\right),
\end{align}
 and time
\begin{align}
T\geq 1+\frac{1}{\varepsilon}\left(\frac{1}{2}+\frac{2\log|\beta|}{\log n}\right),
\end{align}
let $\widehat \bmv= \bmu_T$,
and
$\widehat \beta= \bmX[\widehat\bmv^{\otimes k}]$.
The asymptotic confidence interval of $\langle\bma, \bmv\rangle$ is given by 
\begin{align}\label{coro1:clt1}
\frac{1}{1-1/(2\widehat\beta^2)}\left[\langle \bma,\widehat\bmv \rangle-z_{\alpha}\frac{\sqrt{\langle\bma, (\bmI_n-\widehat\bmv\widehat\bmv^\top)\bma\rangle}}{\sqrt{n}\widehat\beta},\
\langle\bma, \widehat\bmv\rangle+z_{\alpha}\frac{\sqrt{\langle \bma, (\bmI_n-\widehat\bmv\widehat\bmv^\top)\bma\rangle}}{\sqrt{n}\widehat\beta}
\right].
\end{align}
We have similar results for Cases \ref{c:case2}, \ref{c:case3}, \ref{c:case4}, by simply changing $(\beta, \bmv)$
in \eqref{coro1:clt1} to the corresponding limit.
\end{corollary}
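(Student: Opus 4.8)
The plan is to read the confidence interval off directly from the pivotal central limit theorem of Corollary~\ref{coro1}, since the substantive analysis has already been carried out there (and, through it, in Theorem~\ref{t:main}); what remains is an inversion of the pivot and a short Slutsky-type remark. First I would note that the hypotheses imposed here are exactly those of Corollary~\ref{coro1}, so, abbreviating
\begin{align*}
W_n\deq\frac{\sqrt{n}\,\widehat\beta}{\sqrt{\langle \bma,(\bmI_n-\widehat\bmv\widehat\bmv^\top)\bma\rangle}}\left[\Big(1-\frac{1}{2\widehat\beta^2}\Big)^{-1}\langle\bma,\widehat\bmv\rangle-\langle\bma,\bmv\rangle\right],
\end{align*}
we have $W_n\xrightarrow{d}\cN(0,1)$. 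Because the standard Gaussian law is atomless, convergence in distribution upgrades to convergence of probabilities of the fixed interval $[-z_\alpha,z_\alpha]$, so $\bP(|W_n|\leq z_\alpha)\to\bP(|\cN(0,1)|\leq z_\alpha)=1-\alpha$, where $z_\alpha$ is the $(1-\alpha/2)$-quantile of the standard normal.

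Next I would rewrite the event $\{|W_n|\leq z_\alpha\}$ as a two-sided bound on the target $\langle\bma,\bmv\rangle$. On the high-probability event on which Theorem~\ref{t:main} holds one has $\widehat\beta>0$ and $\langle\bma,(\bmI_n-\widehat\bmv\widehat\bmv^\top)\bma\rangle\geq 0$, so multiplying through by the nonnegative scaling factor turns $|W_n|\leq z_\alpha$ into
\begin{align*}
\frac{\langle\bma,\widehat\bmv\rangle}{1-1/(2\widehat\beta^2)}-z_\alpha\frac{\sqrt{\langle \bma,(\bmI_n-\widehat\bmv\widehat\bmv^\top)\bma\rangle}}{\sqrt{n}\,\widehat\beta}\ \leq\ \langle\bma,\bmv\rangle\ \leq\ \frac{\langle\bma,\widehat\bmv\rangle}{1-1/(2\widehat\beta^2)}+z_\alpha\frac{\sqrt{\langle \bma,(\bmI_n-\widehat\bmv\widehat\bmv^\top)\bma\rangle}}{\sqrt{n}\,\widehat\beta}.
\end{align*}
This is an interval with the same center $\langle\bma,\widehat\bmv\rangle/(1-1/(2\widehat\beta^2))$ as the interval \eqref{coro1:clt1}; its half-width differs from that of \eqref{coro1:clt1} only by the multiplicative factor $\big(1-1/(2\widehat\beta^2)\big)^{-1}$. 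Since the standing assumption $|\beta\langle\bmu,\bmv\rangle^{k-2}|\geq n^\varepsilon$ forces $|\beta|\geq n^\varepsilon$, and \eqref{e:betaclt} gives $\widehat\beta=\beta+\oo(1)$ with high probability, we have $\big(1-1/(2\widehat\beta^2)\big)^{-1}=1+\OO(|\beta|^{-2})=1+\oo(1)$; hence passing between the two half-widths rescales $W_n$ by a factor $1+\oo(1)$ and, the limit law being continuous, does not change the asymptotic coverage. Therefore $\langle\bma,\bmv\rangle$ lies in \eqref{coro1:clt1} with asymptotic probability $1-\alpha$, proving the claim in Case~\ref{c:case1}; Cases~\ref{c:case2}, \ref{c:case3}, \ref{c:case4} follow by the identical argument after replacing $(\beta,\bmv)$ by the relevant limit, using the closing sentence of Corollary~\ref{coro1}.

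I do not anticipate a genuine obstacle here: the only points that call for care were already dealt with inside Corollary~\ref{coro1}, whose pivot $W_n$ is fully data-driven. Specifically, the consistency $\widehat\beta=\beta+\oo(1)$ from \eqref{e:betaclt} and the consistency of $\langle\bma,(\bmI_n-\widehat\bmv\widehat\bmv^\top)\bma\rangle$ for the true variance $\langle\bma,(\bmI_n-\bmv\bmv^\top)\bma\rangle$ (which follows from the entrywise expansion \eqref{e:clt} under the genericity bound \eqref{e:avbound11}) have been absorbed there via Slutsky's theorem. Consequently the interval \eqref{coro1:clt1} is a function of the observation $\bmX$ and the prescribed direction $\bma$ alone, so it is a genuine confidence interval requiring no oracle knowledge of $\beta$ or $\bmv$, and nothing beyond the bookkeeping above is needed.
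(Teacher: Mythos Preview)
Your proposal is correct and follows essentially the same route as the paper, which simply says the interval is obtained from Corollary~\ref{coro1} ``by bounding the absolute values of the left hand sides of \eqref{coro:clt1} at $z_{\alpha}$.'' You are in fact more careful than the paper: you notice that direct inversion of the pivot yields half-width $z_\alpha\sqrt{\langle\bma,(\bmI_n-\widehat\bmv\widehat\bmv^\top)\bma\rangle}/(\sqrt{n}\,\widehat\beta)$, whereas the stated interval \eqref{coro1:clt1} carries an extra factor $(1-1/(2\widehat\beta^2))^{-1}$ on the half-width, and you correctly dispose of this discrepancy via a Slutsky argument using $\widehat\beta\to\infty$.
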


\subsection{General Results: rank-$r$ spiked tensor model }\label{se:rankr}
In this section, we state our main results for the general case, the rank-$r$ spiked tensor model (\ref{e:rankr}). 
Before stating our main results, we need to introduce some more notations and assumptions.
\begin{assumption}\label{a:sumpr}
We assume that the initialization does not distinguish $\bmv_1, \bmv_2,\cdots, \bmv_r$, such that there exists some large constant $\kappa>0$
\begin{align}
1/\kappa\leq \left|\frac{\langle \bmu, \bmv_i\rangle}{\langle \bmu, \bmv_j\rangle}\right|\leq \kappa,
\end{align}
for all $1\leq i,j\leq r$.
\end{assumption}
If we take the uniform initialization, i.e.
$\bmu_0=\bmu$ is a uniformly distributed vector in $\bS^{n-1}$. Then with probability  $1-\OO(r/\sqrt{\kappa})$
we will have $1/\sqrt {\kappa n}\leq |\langle\bmu, \bmv_i \rangle|\leq \sqrt {\kappa/n}$ for $1\leq i \leq r$, and Assumption \ref{a:sumpr} holds.

The same as in the rank-1 case, the quantities $|\beta_j\langle \bmu, \bmv_j\rangle^{k-2}|$ play a crucial role in our power iteration algorithm. We need to make the following technical assumption:
\begin{assumption}\label{a:sumpr2}
Let $j_*=\argmax_j |\beta_j\langle \bmu, \bmv_j\rangle^{k-2}|$.
We assume that there exists some large constant $\kappa>0$
\begin{align}
 (1-1/\kappa)|\beta_{j_*}\langle \bmu, \bmv_{j_*}\rangle^{k-2}|\geq |\beta_j\langle \bmu, \bmv_j\rangle^{k-2}|,
\end{align}
for all $1\leq j\leq r$ and $j\neq j_*$.
\end{assumption}

It turns out under Assumptions \ref{a:sumpr} and \ref{a:sumpr2}, the power iteration converges to $\bmv_{j_*}$. Moreover,  if we simply take the uniform initialization, i.e.
$\bmu_0=\bmu$ is a uniformly distributed vector in $\bS^{n-1}$.  Assumption \ref{a:sumpr2} holds for some $1\leq j_*\leq r$ with probability $1-\OO(1/\kappa)$.

\begin{theorem}\label{t:mainr}
Fix the initialization $\bmu\in \bR^n$ with $\|\bmu\|_2=1$ and $|\langle \bmu, \bmv_j\rangle| \gtrsim 1/\sqrt n$, for $1\leq j\leq r$.
Let $j_*=\argmax_j |\beta_j\langle \bmu, \bmv_j\rangle^{k-2}|$. Under Assumptions \ref{a:sumpr} and \ref{a:sumpr2}, if $|\beta_{j_*}\langle \bmu, \bmv_{j_*}\rangle^{k-2}|\geq n^\varepsilon$ with arbitrarily small $\varepsilon>0$, the behavior of the power iteration algorithm depends on the parity of $k$ and the sign of $\beta_{j_*}$:
\begin{enumerate}
\item \label{c:case1r} If $k$ is odd, and $\beta_{j_*}>0$ then $(\bmX[\bmu_t^{\otimes k}], \bmu_{t})$ converges to  $(\beta_{j_*}, \bmv_{j_*})$;
\item \label{c:case2r}If $k$ is odd, and $\beta_{j_*}<0$ then $(\bmX[\bmu_t^{\otimes k}], \bmu_{t})$ converges to  $(-\beta_{j_*}, -\bmv_{j_*})$;
\item \label{c:case3r}If $k$ is even, and $\beta_{j_*}>0$, then $(\bmX[\bmu_t^{\otimes k}], \bmu_{t})$ converges to $(\beta_{j_*}, \sgn(\langle\bmu, \bmv_{j_*}\rangle)\bmv_{j_*})$ depending on the initialization $\bmu$;
\item \label{c:case4r}If $k$ is even, and $\beta_{j_*}<0$, then $(\bmX[\bmu_t^{\otimes k}], \bmu_{t})$ does not converge, but instead alternating between $(\beta_{j_*}, \sgn(\langle\bmu, \bmv_{j_*}\rangle)\bmv_{j_*})$ and $(\beta_{j_*}, -\sgn(\langle\bmu, \bmv_{j_*}\rangle)\bmv_{j_*})$.
\end{enumerate}

In Case \ref{c:case1r}, for any fixed unit vector $\bma\in \bR^n$, and
\begin{align}
T\geq 1+\frac{1}{\varepsilon}\left(\frac{1}{2}+\frac{2\log|\beta_1|}{\log n}\right)+\frac{\log\log(\sqrt n|\beta_1|)}{\log(k-1)},
\end{align}
the estimators $\widehat \bmv= \bmu_T$,
and 
$\widehat \beta= \bmX[\widehat\bmv^{\otimes k}]$ satisfies
\begin{align}\begin{split}\label{e:cltr}
\langle \bma, \widehat\bmv\rangle=\langle \bma, \bmu_{T}\rangle&=\left(1-\frac{1}{2\beta_{j_*}^2}\right)\langle\bma, \bmv_{j_*}\rangle+\frac{\langle \bma, \bmxi\rangle-\langle \bma, \bmv_{j_*}\rangle\langle \bmv_{j_*}, \bmxi\rangle}{\beta_{j_*}}\\
&+\OO_\bP\left(\frac{\log n}{\sqrt n}\left(\frac{\log n}{\sqrt n |\beta_1|}\right)^{k-1}+\frac{\log n}{|\beta_1|^2\sqrt n}+\frac{(\log n)^{3/2}}{|\beta_1|^{3/2}n^{3/4}}+\frac{1}{|\beta_1|^4}\right),
\end{split}\end{align}
where $\bmxi=\bmZ[\bmv_{j_*}^{\otimes(k-1)}]$, is an $n$-dim vector, with each entry i.i.d. $\cN(0,1/n)$ Gaussian random variable. And
\begin{align}\begin{split}\label{e:betacltr}
\widehat \beta
&=\bmX[\bmu_T^{\otimes k}]=\beta_{j_*}+\langle \bm\xi, \bmv_{j_*}\rangle -\frac{k/2-1}{\beta_{j_*}}\\
&+\OO_\bP\left(\frac{\log n}{\sqrt n}\left(\frac{\log n}{\sqrt n |\beta_1|}\right)^{k-1}+\frac{\log n}{|\beta_1| \sqrt n}+\frac{(\log n)^{3/2}}{|\beta_1|^{1/2}n^{3/4}}+\frac{1}{|\beta_1|^3}\right).
\end{split}\end{align}
Under the same assumption, we have similar results for Cases \ref{c:case2r}, \ref{c:case3r}, \ref{c:case4r}, by simply changing $(\beta_{j_*}, \bmv_{j_*})$ in the righthand side of \eqref{e:cltr} and \eqref{e:betacltr} to the corresponding limit.
\end{theorem}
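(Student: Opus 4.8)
The plan is to reduce everything to the rank-one Theorem~\ref{t:main} by first showing that, under Assumptions~\ref{a:sumpr} and~\ref{a:sumpr2}, a short ``separation phase'' of the iteration \eqref{e:power} drives $\bmu_t$ into a region where only the dominant spike $\bmv_{j_*}$ matters and the remaining signal $\sum_{j\neq j_*}\beta_j\bmv_j^{\otimes k}$ acts as a negligible perturbation of the Gaussian noise. Using orthonormality of the signal vectors $\bmv_1,\dots,\bmv_r$ one has $(\bmv_j^{\otimes k})[\bmu^{\otimes(k-1)}]=\langle\bmu,\bmv_j\rangle^{k-1}\bmv_j$, so one step of \eqref{e:power} reads, before normalization,
\[
\bmX[\bmu_t^{\otimes(k-1)}]=\sum_{j=1}^r\beta_j\langle\bmu_t,\bmv_j\rangle^{k-1}\bmv_j+\bmZ[\bmu_t^{\otimes(k-1)}].
\]
Writing $a_j^{(t)}=\langle\bmu_t,\bmv_j\rangle$ and $\bmw_t=\bmu_t-\sum_j a_j^{(t)}\bmv_j$, this produces coupled scalar recursions $a_j^{(t+1)}=\bigl(\beta_j(a_j^{(t)})^{k-1}+\langle\bmZ[\bmu_t^{\otimes(k-1)}],\bmv_j\rangle\bigr)/\|\bmX[\bmu_t^{\otimes(k-1)}]\|$ together with an evolution equation for $\bmw_t$, all sharing the same normalization factor, which I track separately. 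The noise quantities $\langle\bmZ[\bmu_t^{\otimes(k-1)}],\bmv_j\rangle$ and $\|\bmZ[\bmu_t^{\otimes(k-1)}]\|$ are controlled, conditionally on $\bmu_t$, by the Gaussian concentration and decoupling estimates already used in the rank-one proof, giving magnitudes of order $\sqrt{\log n/n}$ and $1+\OO(\sqrt{\log n/n})$ respectively.

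The heart of the argument is the separation phase. In the variables $b_j^{(t)}=\sgn(\beta_j)\,|\beta_j|^{1/(k-2)}a_j^{(t)}$ (tracking the sign of $a_j^{(t)}$ separately when $k$ is even and $\beta_j<0$), the signal part of the recursion becomes $b_j^{(t+1)}\propto(b_j^{(t)})^{k-1}$ with a common proportionality constant, so as long as the signal dominates the noise the ratios obey $|b_{j_*}^{(t)}/b_j^{(t)}|=|b_{j_*}^{(0)}/b_j^{(0)}|^{(k-1)^t}$. Since $|b_j^{(0)}|^{k-2}=|\beta_j\langle\bmu,\bmv_j\rangle^{k-2}|$, the index $j_*$ maximizes $|b_j^{(0)}|$, and Assumption~\ref{a:sumpr2} gives the uniform gap $|b_{j_*}^{(0)}/b_j^{(0)}|\geq(1-1/\kappa)^{-1/(k-2)}>1$; iterating the power-$(k-1)$ map this ratio grows doubly exponentially, exceeding any fixed polynomial in $n$ and in $\max_j|\beta_j|$ after $T_1\asymp\log\log(\sqrt n|\beta_1|)/\log(k-1)$ steps (Assumption~\ref{a:sumpr} being used to relate all $|a_j^{(0)}|$, and hence all $|\beta_j|$, to $|\beta_1|$ up to constants). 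Simultaneously one shows that $|a_{j_*}^{(t)}|$ itself grows from $\asymp n^{-1/2}$ to $\Theta(1)$ during this phase — because $|\beta_{j_*}(a_{j_*}^{(t)})^{k-2}|$ starts at $\geq n^\varepsilon$ and is monotone increasing, so the signal update always beats the $\OO(\sqrt{\log n/n})$ additive noise — and that $\|\bmw_t\|$ stays $\oo(1)$ over these $\OO(\log\log n)$ steps. The output is a vector $\bmu_{T_1}$ with $\langle\bmu_{T_1},\bmv_{j_*}\rangle=\Theta(1)$, with $\sum_{j\neq j_*}|\beta_j\langle\bmu_{T_1},\bmv_j\rangle^{k-1}|$ super-polynomially small compared to $|\beta_{j_*}|$, and with $|\beta_{j_*}\langle\bmu_{T_1},\bmv_{j_*}\rangle^{k-2}|\geq n^\varepsilon$.

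For $t\geq T_1$ the term $\sum_{j\neq j_*}\beta_j\langle\bmu_t,\bmv_j\rangle^{k-1}\bmv_j$ is, by the above, smaller than the error terms already present, so $\bmX$ is, up to an admissible perturbation, the rank-one tensor $\beta_{j_*}\bmv_{j_*}^{\otimes k}+\bmZ$; applying Theorem~\ref{t:main} with initialization $\bmu_{T_1}$, signal $(\beta_{j_*},\bmv_{j_*})$, and remaining time $T-T_1\geq 1+\tfrac1\varepsilon\bigl(\tfrac12+\tfrac{2\log|\beta_1|}{\log n}\bigr)$ yields the convergence dichotomy (items~\ref{c:case1r}--\ref{c:case4r}) and, in Case~\ref{c:case1r}, the expansions \eqref{e:cltr} and \eqref{e:betacltr}, whose leading term $(1-1/(2\beta_{j_*}^2))$ and Gaussian term $\bmxi=\bmZ[\bmv_{j_*}^{\otimes(k-1)}]$ are inherited verbatim from \eqref{e:clt} and \eqref{e:betaclt}; the extra error contributions carrying the factor $(\log n/(\sqrt n|\beta_1|))^{k-1}$ are exactly the bookkeeping for the residual non-dominant signal and for the $\OO(\sqrt{\log n/n})$ discrepancy between $\bmu_{T_1}$ and a clean rank-one iterate. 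The step I expect to be the main obstacle is controlling the coupled dynamics during the separation phase: one must propagate the doubly-exponential growth of the ratios $|b_{j_*}^{(t)}/b_j^{(t)}|$ while the individual scalars $a_j^{(t)}$ are still only of size $n^{-1/2}$ and the additive Gaussian noise is comparable to some of the non-dominant components; keep $\bmw_t$ from accumulating over the $\OO(\log\log n)$ steps; and handle the sign bookkeeping for even $k$ with negative $\beta_j$, where the substitution $b_j=|\beta_j|^{1/(k-2)}a_j$ must be supplemented by a separate analysis of $\sgn(a_j^{(t)})$ — which is precisely why Case~\ref{c:case4r} only alternates. None of this has an analogue in the rank-one proof.
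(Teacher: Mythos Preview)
Your key mechanism---rescaling to $b_j^{(t)}=|\beta_j|^{1/(k-2)}a_j^{(t)}$ so that the signal recursion becomes a common power-$(k-1)$ map and the ratio $|b_{j_*}^{(t)}/b_j^{(t)}|$ grows doubly exponentially from the initial gap supplied by Assumption~\ref{a:sumpr2}---is exactly what the paper uses (it appears there as $r_t=\max_{j\neq j_*}\beta_j^{1/(k-2)}a_{tj}/(\beta_{j_*}^{1/(k-2)}a_{tj_*})$). But the modular plan ``separate, then invoke Theorem~\ref{t:main} as a black box with initialization $\bmu_{T_1}$'' has a genuine gap: $\bmu_{T_1}$ is \emph{not} independent of $\bmZ$---it is a measurable function of the Gaussians $\bmZ[\bmu_s^{\otimes(k-1)}]$ for $s<T_1$---whereas Theorem~\ref{t:main} is stated and proved for a fixed initialization. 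Conditioning on what the separation phase has revealed leaves a noise tensor whose law is no longer i.i.d.\ Gaussian on the full space, and in particular $\bmxi=\bmZ[\bmv_{j_*}^{\otimes(k-1)}]$, the very vector you want in \eqref{e:cltr}, has already been exposed. Handling this requires tracking, step by step, exactly which linear functionals of $\bmZ$ have been revealed; once you do that you are no longer invoking Theorem~\ref{t:main} but reproving it. There is also a timeline error: $|a_{j_*}^{(t)}|$ grows by a factor $\gtrsim n^\varepsilon$ per step, so reaching $\Theta(1)$ from $\asymp n^{-1/2}$ takes $\asymp 1/(2\varepsilon)$ steps, not $T_1\asymp\log\log(\sqrt n|\beta_1|)/\log(k-1)$ as you claim; the $\log\log$ term is the \emph{additional} time needed for the doubly-exponential separation to reach the precision $(\log n/(\sqrt n|\beta_1|))^{k-1}$ once $x_t=|c_t/a_t|$ is already small.

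The paper accordingly does not reduce to the rank-one theorem; it reruns the whole argument in the rank-$r$ setting. The Gram--Schmidt basis $\bmv^{\otimes(k-1)},\bmtau_0,\ldots,\bmtau_{t-1}$ of Claim~\ref{c:expression} is enlarged to $\{\bmv_{j_1}\otimes\cdots\otimes\bmv_{j_{k-1}}\}_{\bmj\in\qq{1,r}^{k-1}},\bmtau_0,\ldots,\bmtau_{t-1}$ (Claim~\ref{c:bbexp}), the decomposition of $\bmy_t$ carries $r$ signal coefficients $a_{tj}$ and $r^{k-1}$ extra noise coefficients $b_{t\bmj}$, and the inductive bounds of Claim~\ref{c:coebound} become those of Claim~\ref{c:eedy}, which maintains $|a_{tj}|\lesssim|a_{tj_*}|$ and $|a_t|\gtrsim|\beta_1|\sum|b|$ throughout. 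The rank-one quantity $x_t=|c_t/a_t|$ and the separation ratio $r_t$ are then analyzed \emph{simultaneously} inside this conditioning structure, with the combined time bound emerging as the sum of the two contributions. Your proposal becomes correct if ``apply Theorem~\ref{t:main}'' is replaced by ``repeat its proof inside the enlarged conditioning structure,'' at which point it coincides with the paper's argument.
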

In Theorem \ref{t:mainr}, we assume that $|\langle \bmu, \bmv_j\rangle| \gtrsim 1/\sqrt n$ for $1\leq j\leq r$. This is generic and is true for a random initialization $\bmu$.

We want to remark that for multi-rank spiked tensor model, the senarios for $k=2$, i.e. the spiked matrix model, and $k\geq 3$ are very different. For the spiked matrix model, in Theorem \ref{t:mainr}, we always have that $j_*=\argmax_j|\beta_j|=1$, and power iteration algorithm always converges to the eigenvector corresponding to the largest eigenvalue. However, for rank $k\geq 3$, the power iteration algorithm may converge to any vector $\bmv_j$ provided that the initialization $\bmu$ is sufficiently close to $\bmv_j$.
As a consequence of Theorem \ref{t:mainr}, we have the following central limit theorem for our estimators.
\begin{corollary}\label{coro3}
Fix the initialization $\bmu\in \bR^n$ with $\|\bmu\|_2=1$ and $|\langle \bmu, \bmv_j\rangle| \gtrsim 1/\sqrt n$ for $1\leq j\leq r$.
We assume $|\beta\langle \bmu, \bmv_{j_*}\rangle^{k-2}|\geq n^{\varepsilon}$ with arbitrarily small $\varepsilon>0$, and Assumptions \ref{a:sumpr} and \ref{a:sumpr2}. In Case \ref{c:case1r} of Theorem \ref{t:mainr}, for any fixed unit vector $\bma\in \bR^n$, 
for any fixed unit vector $\bma\in \bR^n$ obeying
\begin{align}\label{e:avboundk}
|\langle\bma, \bmv_{j_*} \rangle|=\oo\left(\frac{|\beta_1|^3}{\sqrt n}\right),
\end{align}
and time
\begin{align}
T\geq 1+\frac{1}{\varepsilon}\left(\frac{1}{2}+\frac{2\log|\beta_1|}{\log n}\right),
\end{align}
the estimators $\widehat \bmv= \bmu_T$,
and
$\widehat \beta=\bmX[\bmu_T^{\otimes k}]$
satisfy
\begin{equation}\label{coro:clt3}
\frac{\sqrt{n}\widehat\beta_{j_*}}{\sqrt{\langle\bma, (\bmI_n-\widehat\bmv \widehat\bmv^\top)\bma\rangle}}
\left[\big(1-\frac{1}{2\widehat\beta_{j_*}^2}\big)^{-1}\langle\bma,\widehat\bmv\rangle-\langle\bma,\bmv_{j_*}\rangle\right]
\xrightarrow{d} \cN(0,1).
\end{equation}
We have similar results for Cases \ref{c:case2r}, \ref{c:case3r}, \ref{c:case4r}, by simply changing $(\beta_{j_*}, \bmv_{j_*})$
in \eqref{coro:clt3} to the corresponding limit.
\end{corollary}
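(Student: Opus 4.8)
The plan is to read off \eqref{coro:clt3} from the first-order expansion \eqref{e:cltr} in Theorem \ref{t:mainr} by isolating one exactly-Gaussian term and showing every remaining contribution is $\oo_\bP(1)$ after the CLT normalization; I work in Case \ref{c:case1r}, the other cases being obtained by the substitution $(\beta_{j_*},\bmv_{j_*})\mapsto(\text{limit})$ as in the statement. Writing $\Pi_{j_*}\deq\bmI_n-\bmv_{j_*}\bmv_{j_*}^\top$ and noting $\langle\bma,\bmxi\rangle-\langle\bma,\bmv_{j_*}\rangle\langle\bmv_{j_*},\bmxi\rangle=\langle\Pi_{j_*}\bma,\bmxi\rangle$, dividing \eqref{e:cltr} by $1-1/(2\beta_{j_*}^2)$ gives
\[
\Bigl(1-\tfrac{1}{2\beta_{j_*}^2}\Bigr)^{-1}\langle\bma,\widehat\bmv\rangle-\langle\bma,\bmv_{j_*}\rangle
=\frac{1}{\beta_{j_*}}\Bigl(1-\tfrac{1}{2\beta_{j_*}^2}\Bigr)^{-1}\langle\Pi_{j_*}\bma,\bmxi\rangle+\bigl(1+\OO(\beta_{j_*}^{-2})\bigr)\cE ,
\]
where $\cE$ is the error term of \eqref{e:cltr}.

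Next I would pin down the leading term. Since $\bmZ$ has i.i.d.\ $\cN(0,1/n)$ entries and $\bmv_{j_*}$ is a fixed unit vector, $\bmxi=\bmZ[\bmv_{j_*}^{\otimes(k-1)}]$ again has i.i.d.\ $\cN(0,1/n)$ entries, so $\langle\Pi_{j_*}\bma,\bmxi\rangle$ is \emph{exactly} $\cN(0,\tfrac1n\|\Pi_{j_*}\bma\|^2)$; because $\Pi_{j_*}$ is an orthogonal projection, $\|\Pi_{j_*}\bma\|^2=\langle\bma,\Pi_{j_*}\bma\rangle=\langle\bma,(\bmI_n-\bmv_{j_*}\bmv_{j_*}^\top)\bma\rangle$, which is positive and non-degenerate under \eqref{e:avboundk}. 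Hence, for every $n$,
\[
\frac{\sqrt n\,\beta_{j_*}}{\sqrt{\langle\bma,(\bmI_n-\bmv_{j_*}\bmv_{j_*}^\top)\bma\rangle}}\cdot\frac{\langle\Pi_{j_*}\bma,\bmxi\rangle}{\beta_{j_*}}\ \stackrel{\text{d}}{=}\ \cN(0,1).
\]

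Then I would kill the error term. From $|\beta_{j_*}\langle\bmu,\bmv_{j_*}\rangle^{k-2}|\geq n^\varepsilon$ and $|\langle\bmu,\bmv_{j_*}\rangle|\leq 1$ one has $|\beta_{j_*}|\geq n^\varepsilon$, while Assumptions \ref{a:sumpr}--\ref{a:sumpr2} give $|\beta_1|\asymp|\beta_{j_*}|$; in particular $(1-1/(2\beta_{j_*}^2))^{-1}=1+\oo(1)$. Multiplying the four terms of $\cE$ in \eqref{e:cltr} by $\sqrt n\,|\beta_{j_*}|$ and dividing by $\sqrt{\langle\bma,\Pi_{j_*}\bma\rangle}$, the first three are each bounded by a negative power of $n$ times a power of $\log n$, hence $\oo_\bP(1)$; the last, bias-type term is precisely the one that \eqref{e:avboundk} is designed to control, being $\oo$ of the fluctuation scale $\sqrt{\langle\bma,\Pi_{j_*}\bma\rangle}/(\sqrt n\,|\beta_{j_*}|)$. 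Making the $\langle\bma,\bmv_{j_*}\rangle$-dependence inside the $\OO_\bP$-term of \eqref{e:cltr} explicit and verifying this last bound is the main technical point of the argument.

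Finally I would pass from the population normalization to the plug-in one by Slutsky's theorem. By \eqref{e:betacltr}, $\widehat\beta=\beta_{j_*}(1+\oo_\bP(1))$, so $\widehat\beta/\beta_{j_*}\to1$ and $1/(2\widehat\beta^2)=(1+\oo_\bP(1))/(2\beta_{j_*}^2)$; and \eqref{e:cltr} together with \eqref{e:avboundk} gives $\langle\bma,(\bmI_n-\widehat\bmv\widehat\bmv^\top)\bma\rangle/\langle\bma,(\bmI_n-\bmv_{j_*}\bmv_{j_*}^\top)\bma\rangle\to1$ in probability, using $\langle\bma,(\bmI_n-\bmw\bmw^\top)\bma\rangle=1-\langle\bma,\bmw\rangle^2$ for unit vectors $\bma,\bmw$. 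Combining the exact Gaussianity of the leading term, the $\oo_\bP(1)$ bound on the error, and these ratio limits through Slutsky's theorem yields \eqref{coro:clt3}; the remaining cases follow by the stated substitution.
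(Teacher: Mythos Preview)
Your proposal is correct and follows essentially the same approach as the paper: both arguments start from the expansion \eqref{e:cltr} of Theorem \ref{t:mainr}, recognize $\langle\bma,\bmxi\rangle-\langle\bma,\bmv_{j_*}\rangle\langle\bmv_{j_*},\bmxi\rangle=\langle(\bmI_n-\bmv_{j_*}\bmv_{j_*}^\top)\bma,\bmxi\rangle$ as an exact centered Gaussian with variance $\tfrac{1}{n}\langle\bma,(\bmI_n-\bmv_{j_*}\bmv_{j_*}^\top)\bma\rangle$, use \eqref{e:betacltr} to get $\widehat\beta=\beta_{j_*}(1+\oo_\bP(1))$, and invoke \eqref{e:avboundk} to kill the bias-type remainder. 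The only organizational difference is that the paper first substitutes $\widehat\beta$ for $\beta_{j_*}$ in the prefactor $(1-1/(2\beta_{j_*}^2))$ and $\widehat\bmv$ for $\bmv_{j_*}$ in the variance, bounding the substitution errors directly, whereas you keep the population quantities throughout and pass to the plug-in normalization at the end via Slutsky; these are equivalent. Your remark that the $\langle\bma,\bmv_{j_*}\rangle$-dependence in the last $\OO_\bP$-term of \eqref{e:cltr} must be made explicit for \eqref{e:avboundk} to do its job is a fair observation that the paper leaves implicit.
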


In the following we take $\bmu$ to be a random vector uniformly distributed over the unit sphere. The power iteration algorithm can be easily understood in this setting, thanks to Theorem \ref{t:mainr}. More precisely if $j_*=\argmax_j |\beta_j\langle \bmu, \bmv_j\rangle^{k-2}|$ and the initialization $\bmu$ satisfies Assumptions \ref{a:sumpr} and \ref{a:sumpr2}, then the power iteration estimator $(\widehat\bmv, \widehat\beta)$ recovers $(\bmv_{j_*}, \beta_{j_*})$. From the discussions below, for a random vector $\bmu$ uniformly distributed over the unit sphere, Assumptiosn \ref{a:sumpr} and \ref{a:sumpr2} holds with probability $1-\OO(1/\sqrt\kappa)$. We can compute explicitly the probability that index $i$ achieves $\argmax_j |\beta_j\langle \bmu, \bmv_j\rangle^{k-2}|$:
\begin{align}\begin{split}\label{e:defp}
p_i
&\deq\bP(i=\argmax_j |\beta_j\langle \bmu, \bmv_j\rangle^{k-2}|)\\
&=\int_0^\infty \sqrt{\frac{2}{\pi}}e^{-x^2/2}\left(\prod_{\ell\neq i}\int_0^{\left(\frac{|\beta_i|}{|\beta_\ell|}\right)^{\frac{1}{k-2}}x}\sqrt{\frac{2}{\pi}}e^{-y^2/2}\rd y\right)\rd x,
\end{split}\end{align}
for any $1\leq i\leq r$. For spiked matrix model, i.e. $k=2$, we always have 
$1=\argmax_j |\beta_j\langle \bmu, \bmv_j\rangle^{k-2}|$, and
 $p_1=1, p_2=p_3=\cdots=0$. For spiked tensor models with $k\geq 3$, all those $p_i$ are nonnegative and $p_1\geq p_2\geq p_3\geq \cdots>0$.

\begin{theorem}\label{t:randominit}
Fix large $\kappa>0$ and recall $p_i$ as defined \eqref{e:defp}. If $\bmu$ is uniformly distributed over the unit sphere, and  $|\beta_1|\geq n^{(k-2)/2+\varepsilon}$ with arbitrarily small $\varepsilon>0$, then for any $1\leq i\leq r$:
\begin{enumerate}
\item \label{c:case1rr} If $k$ is odd, and $\beta_i>0$  then with probability $p_i+\OO(1/\sqrt{\kappa})$,
$(\bmX[\bmu_t^{\otimes k}], \bmu_{t})$ converges to  $(\beta_i, \bmv_{i})$;
\item \label{c:case2rr}If $k$ is odd, and $\beta_i<0$ then with probability $p_i+\OO(1/\sqrt{\kappa})$,
 $(\bmX[\bmu_t^{\otimes k}], \bmu_{t})$ converges to  $(-\beta_i, -\bmv_{i})$;
\item \label{c:case3rr}If $k$ is even, and $\beta_i>0$, then  with probability $p_i/2+\OO(1/\sqrt{\kappa})$,
$(\bmX[\bmu_t^{\otimes k}], \bmu_{t})$ converges to $(\beta_i, + \bmv_{i})$, and with probability $p_i/2+\OO(1/\sqrt{\kappa})$,
$(\bmX[\bmu_t^{\otimes k}], \bmu_{t})$ converges to $(\beta_i, -\bmv_{i})$.
\item \label{c:case4rr}If $k$ is even, and $\beta_i<0$, then with probability $p_i+\OO(1/\sqrt{\kappa})$,
 $(\bmX[\bmu_t^{\otimes k}], \bmu_{t})$ alternates between $(\beta_i, \bmv_{i})$ and $(\beta_i, -\bmv_{i})$  .
\end{enumerate}

In Case \ref{c:case1rr}, for any fixed unit vector $\bma\in \bR^n$, and
\begin{align}
T\geq 1+\frac{1}{\varepsilon}\left(\frac{1}{2}+\frac{2\log|\beta_1|}{\log n}\right)+\frac{\log\log(\sqrt n|\beta_1|)}{\log(k-1)},
\end{align}
with probability $p_i+\OO(1/\sqrt{\kappa})$,
the estimators $\widehat \bmv= \bmu_T$,
and 
$\widehat \beta=\bmX[\bmu_T^{\otimes k}]$
satisfy
\begin{align}\begin{split}\label{e:cltrr}
\langle \bma, \widehat\bmv\rangle=\langle \bma, \bmu_{T}\rangle&=\left(1-\frac{1}{2\beta_{i}^2}\right)\langle\bma, \bmv_{i}\rangle+\frac{\langle \bma, \bmxi\rangle-\langle \bma, \bmv_{i}\rangle\langle \bmv_{i}, \bmxi\rangle}{\beta_{i}}\\
&+\OO_\bP\left(\frac{\log n}{\sqrt n}\left(\frac{\log n}{\sqrt n |\beta_1|}\right)^{k-1}+\frac{\log n}{|\beta_1|^2\sqrt n}+\frac{(\log n)^{3/2}}{|\beta_1|^{3/2}n^{3/4}}+\frac{1}{|\beta_1|^4}\right),
\end{split}\end{align}
where $\bmxi=\bmZ[\bmv_{i}^{\otimes(k-1)}]$, is an $n$-dim vector, with each entry i.i.d. $\cN(0,1/n)$ Gaussian random variable. And
\begin{align}\begin{split}\label{e:betacltrr}
\widehat \beta
&=\bmX[\bmu_T^{\otimes k}]=\beta_{i}+\langle \bm\xi, \bmv_{i}\rangle -\frac{k/2-1}{\beta_{i}}\\
&+\OO_\bP\left(\frac{\log n}{\sqrt n}\left(\frac{\log n}{\sqrt n |\beta_1|}\right)^{k-1}+\frac{\log n}{|\beta_1| \sqrt n}+\frac{(\log n)^{3/2}}{|\beta_1|^{1/2}n^{3/4}}+\frac{1}{|\beta_1|^3}\right).
\end{split}\end{align}
Under the same assumption, we have similar results for Cases \ref{c:case2rr}, \ref{c:case3rr}, \ref{c:case4rr}, by simply changing $(\beta_i, \bmv_{i})$ in the righthand side of \eqref{e:cltrr} and \eqref{e:betacltrr} to the corresponding limit.
\end{theorem}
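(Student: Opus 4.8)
The plan is to obtain Theorem~\ref{t:randominit} as a specialization of Theorem~\ref{t:mainr}: I will show that a uniformly random $\bmu$ meets the hypotheses of Theorem~\ref{t:mainr} on an event of probability $p_i+\OO(1/\sqrt{\kappa})$ once we also impose $j_*=i$ for $j_*=\argmax_j|\beta_j\langle\bmu,\bmv_j\rangle^{k-2}|$, then identify the integral in \eqref{e:defp} with this probability, and finally track the sign and the parity of $k$. Because Theorem~\ref{t:mainr} holds for each fixed admissible $\bmu$ (the randomness there being only in $\bmZ$), once $\bmu$ is pinned to such an event all of its conclusions, including the expansions \eqref{e:cltr}--\eqref{e:betacltr}, transfer directly, which is what \eqref{e:cltrr}--\eqref{e:betacltrr} assert.

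First I would use the representation $\bmu=\bm{g}/\|\bm{g}\|$ with $\bm{g}\sim\cN(0,\bmI_n)$, so that $\sqrt{n}\,\langle\bmu,\bmv_j\rangle=\sqrt{n}\,\langle\bm{g},\bmv_j\rangle/\|\bm{g}\|$. Taking the $\bmv_j$ orthonormal (in general one picks up the Gram covariance of $(\bmv_j)$ and a negligible near-orthonormality error), the vector $(\langle\bm{g},\bmv_j\rangle)_{j\leq r}$ is standard Gaussian in $\bR^r$, while $\|\bm{g}\|$ concentrates around $\sqrt{n}$. Fix the good event $\cG_\kappa$ on which (i) $|\langle\bm{g},\bmv_j\rangle|\in[2/\sqrt{\kappa},\sqrt{\kappa}/2]$ for all $j\leq r$; (ii) $n/2\leq\|\bm{g}\|^2\leq 2n$; and (iii) the largest of the quantities $|\beta_j\langle\bm{g},\bmv_j\rangle^{k-2}|$ exceeds the runner-up by a factor at least $1+2/\kappa$. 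Gaussian tail bounds together with anti-concentration and small-ball estimates for the finitely many variables $\langle\bm{g},\bmv_j\rangle$ give $\bP(\cG_\kappa^c)=\OO(1/\sqrt{\kappa})$. On $\cG_\kappa$, Assumptions~\ref{a:sumpr} and \ref{a:sumpr2} hold with an enlarged constant, $|\langle\bmu,\bmv_j\rangle|\asymp 1/\sqrt{n}$, and since $|\beta_1|\geq n^{(k-2)/2+\varepsilon}$ one has $|\beta_{j_*}\langle\bmu,\bmv_{j_*}\rangle^{k-2}|\geq|\beta_1\langle\bmu,\bmv_1\rangle^{k-2}|\gtrsim n^{\varepsilon}$; hence Theorem~\ref{t:mainr} applies for every $\bmu\in\cG_\kappa$ (with $T$ as there, after adjusting $\varepsilon$ by a constant factor).

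Next I would evaluate $\bP(j_*=i)$. The index $j_*$ depends only on the direction of $(\langle\bmu,\bmv_1\rangle,\dots,\langle\bmu,\bmv_r\rangle)$, which is uniform on $\bS^{r-1}$ by the $O(r)$-invariance of the uniform law on $\bS^{n-1}$, so we may replace the $\langle\bmu,\bmv_\ell\rangle$ by i.i.d.\ $\cN(0,1)$ variables $g_\ell$. Conditioning on $|g_i|=x$ and using independence of the $g_\ell$, $\ell\neq i$,
\[
\bP(j_*=i)=\int_0^\infty\sqrt{\tfrac{2}{\pi}}\,e^{-x^2/2}\prod_{\ell\neq i}\bP\!\Big(|g_\ell|\leq\big(|\beta_i|/|\beta_\ell|\big)^{1/(k-2)}x\Big)\,\rd x=p_i,
\]
which is exactly \eqref{e:defp}. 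Since on $\cG_\kappa$ the algorithm converges to $\bmv_{j_*}$ by Theorem~\ref{t:mainr}, the event ``the algorithm behaves as claimed in Case~\ref{c:case1rr}'' contains $\cG_\kappa\cap\{j_*=i\}$ (probability $\geq p_i-\OO(1/\sqrt{\kappa})$) and is contained in $\cG_\kappa^c\cup\{j_*=i\}$ (probability $\leq p_i+\OO(1/\sqrt{\kappa})$); hence it has probability $p_i+\OO(1/\sqrt{\kappa})$, and likewise in the other cases.

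Finally I would assemble the cases on $\cG_\kappa\cap\{j_*=i\}$. For $k$ odd the exponent $k-1$ is even, so the first power-iteration step already selects the direction $\sgn(\beta_i)\bmv_i$ regardless of $\sgn(\langle\bmu,\bmv_i\rangle)$, giving Cases~\ref{c:case1rr} and \ref{c:case2rr} with probability $p_i+\OO(1/\sqrt{\kappa})$. For $k$ even the limit carries the factor $\sgn(\langle\bmu,\bmv_i\rangle)$; conditioned on $\{j_*=i\}$---an event measurable with respect to the $|\langle\bmu,\bmv_\ell\rangle|$---this sign is $\pm$ with probability $\tfrac12$ each, for instance by the Householder reflection $\bmu\mapsto\bmu-2\langle\bmu,\bmv_i\rangle\bmv_i$, which is orthogonal and, by orthonormality of the $\bmv_j$, fixes all the $|\langle\bmu,\bmv_\ell\rangle|$ and hence both $\{j_*=i\}$ and $\cG_\kappa$; this yields the $p_i/2$ split of Case~\ref{c:case3rr}, and when $\beta_i<0$ the iteration alternates between $\pm\bmv_i$ irrespective of that sign, so Case~\ref{c:case4rr} again has probability $p_i+\OO(1/\sqrt{\kappa})$. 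The expansions \eqref{e:cltrr}--\eqref{e:betacltrr} then follow by reading \eqref{e:cltr}--\eqref{e:betacltr} on this event. The main obstacle here is probabilistic bookkeeping rather than new analysis: one must carefully certify $\cG_\kappa$, in particular the lower bound $|\langle\bmu,\bmv_1\rangle|\gtrsim 1/\sqrt{n}$ that converts the hypothesis $|\beta_1|\geq n^{(k-2)/2+\varepsilon}$ into the threshold $|\beta_{j_*}\langle\bmu,\bmv_{j_*}\rangle^{k-2}|\geq n^{\varepsilon}$ demanded by Theorem~\ref{t:mainr}, and one must carry out the exact evaluation of $\bP(j_*=i)$, which rests on the orthonormality of the $\bmv_j$.
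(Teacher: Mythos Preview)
Your proposal is correct and follows essentially the same route as the paper: represent $\bmu=\bm g/\|\bm g\|$ with $\bm g$ standard Gaussian, use orthonormality of the $\bmv_j$ to make $(\langle\bm g,\bmv_j\rangle)_j$ i.i.d.\ standard normals, compute $\bP(j_*=i)=p_i$ via the integral \eqref{e:defp}, check that $|\langle\bmu,\bmv_j\rangle|\asymp 1/\sqrt n$ with probability $1-\OO(1/\sqrt\kappa)$ so that Assumptions~\ref{a:sumpr}--\ref{a:sumpr2} and the threshold $|\beta_{j_*}\langle\bmu,\bmv_{j_*}\rangle^{k-2}|\gtrsim n^\varepsilon$ are met, and then invoke Theorem~\ref{t:mainr}. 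Your write-up is in fact more complete than the paper's on two points the paper leaves implicit: you explicitly include the gap condition (iii) needed for Assumption~\ref{a:sumpr2}, and you supply the Householder-reflection symmetry argument that justifies the $p_i/2$ split in Case~\ref{c:case3rr}.
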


We want to emphasize here that the senarios for $k=2$, i.e. the spiked matrix model, and $k\geq 3$ are very different. For spiked matrix model, i.e. $k=2$, we always have that $p_1=0, p_2=p_3=\cdots=0$. The power iteration algorithm always converges to the eigenvector corresponding to the largest eigenvalue. We can only recover $(\beta_1, \bmv_1)$ no matter how many times we repeat the algorithm. However, for spiked tensor models with $k\geq 3$, all those $p_i$ are nonnegative, $p_1\geq p_2\geq p_3\geq \cdots>0$. By repeating the power iteration algorithm for sufficiently many times, it  recovers $(\beta_i, \bmv_i)$ with probability roughly $p_i$.

Similar to the rank one case in Section \ref{se:rank1}, we are also able to establish the asymptotic distribution and confidence interval for multi-rank spiked tensor model with uniformly distributed initialization $\bmu$. 
\begin{corollary}\label{coro4}
Fix $k\geq 3$, assume $\bmu$ to be a random vector uniformly distributed over the unit sphere and  $|\beta_1|\geq n^{(k-2)/2+\varepsilon}$ with arbitrarily small $\varepsilon>0$.
In Case \ref{c:case1rr} of Theorem \ref{t:randominit}, for any fixed unit vector $\bma\in \bR^n$, 
and time
\begin{align*}
T\geq 1+\frac{1}{\varepsilon}\left(\frac{1}{2}+\frac{2\log|\beta_1|}{\log n}\right)+\frac{\log\log(\sqrt n|\beta_1|)}{\log(k-1)},
\end{align*}
for any $1\leq i\leq r$, with probability $p_i+\OO(1/\sqrt{\kappa})$,
the estimators $\widehat \bmv= \bmu_T$
and
$\widehat \beta=\bmX[\bmu_T^{\otimes k}]$
satisfy
\begin{equation}\label{e:bba}
\frac{\sqrt{n}\widehat\beta}{\sqrt{\langle\bma, (\bmI_n-\widehat\bmv \widehat\bmv^\top)\bma\rangle}}
\left[\langle\bma,\widehat\bmv\rangle-\big(1-\frac{1}{2\widehat\beta^2}\big)\langle\bma,\bmv_{i}\rangle\right]
\xrightarrow{d} \cN(0,1).
\end{equation}
And
\begin{align}\label{e:bbc}
\sqrt n \left(\beta_i-\widehat\beta-\frac{k/2-1}{\widehat\beta}\right)\xrightarrow{d} \cN(0,1).
\end{align}
We have similar results for Cases \ref{c:case2r}, \ref{c:case3r}, \ref{c:case4r}, by simply changing $(\beta_{i}, \bmv_{i})$
above to the corresponding limit.
\end{corollary}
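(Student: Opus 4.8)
The plan is to read both limit laws directly off the first-order expansions \eqref{e:cltrr} and \eqref{e:betacltrr} of Theorem~\ref{t:randominit}, using that the leading stochastic terms there are \emph{exactly} Gaussian (no central limit theorem is actually invoked, since $\bmZ$ is Gaussian), and then passing to the data-driven normalization by Slutsky's theorem. Fix $1\le i\le r$ and condition on the event $A_i$ that $\bmu$ obeys Assumptions~\ref{a:sumpr}--\ref{a:sumpr2} and $i=\argmax_j|\beta_j\langle\bmu,\bmv_j\rangle^{k-2}|$; by Theorem~\ref{t:randominit}, $\bP(A_i)=p_i+\OO(1/\sqrt\kappa)$, and on $A_i$ the expansions \eqref{e:cltrr}--\eqref{e:betacltrr} hold with probability $1-\OO(n^{-c(\log n)^2})$. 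Since the random initialization $\bmu$ is independent of the noise $\bmZ$ and $A_i$ depends only on $\bmu$, conditioning on $A_i$ leaves the law of $\bmxi=\bmZ[\bmv_i^{\otimes(k-1)}]\sim\cN(0,\bmI_n/n)$ unchanged.

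For \eqref{e:bba} I would rewrite the stochastic part of \eqref{e:cltrr} as $\langle\bma,\bmxi\rangle-\langle\bma,\bmv_i\rangle\langle\bmv_i,\bmxi\rangle=\langle\bma^\perp,\bmxi\rangle$, where $\bma^\perp=(\bmI_n-\bmv_i\bmv_i^\top)\bma$, so that $\sqrt n\langle\bma^\perp,\bmxi\rangle/\|\bma^\perp\|\sim\cN(0,1)$ exactly and $\|\bma^\perp\|^2=\langle\bma,(\bmI_n-\bmv_i\bmv_i^\top)\bma\rangle=1-\langle\bma,\bmv_i\rangle^2$. Multiplying \eqref{e:cltrr} through by $\sqrt n\,\beta_i/\|\bma^\perp\|$ produces this standard normal plus $\sqrt n\,\beta_i/\|\bma^\perp\|$ times the error vector, so it suffices to check that $\sqrt n\,|\beta_1|$ times each of the four summands of the $\OO_\bP(\cdot)$ in \eqref{e:cltrr} tends to $0$. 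Using $|\beta_i|\le|\beta_1|$ and $|\beta_1|\ge n^{(k-2)/2+\varepsilon}\ge n^{1/2}$ — the last inequality being where $k\ge3$ enters — this is a routine term-by-term verification; for instance $\sqrt n\,|\beta_1|\cdot\frac{\log n}{\sqrt n}\bigl(\frac{\log n}{\sqrt n|\beta_1|}\bigr)^{k-1}=(\log n)^k n^{-(k-1)/2}|\beta_1|^{2-k}\le(\log n)^k n^{(3-2k)/2}\to0$, and the other three summands are treated identically. This appeal to $k\ge3$ is exactly why the side constraint \eqref{e:avboundk} imposed in Corollary~\ref{coro3} is unnecessary here.

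For \eqref{e:bbc} I would use \eqref{e:betacltrr} to write $\beta_i-\widehat\beta-\frac{k/2-1}{\widehat\beta}=-\langle\bmxi,\bmv_i\rangle+(k/2-1)\bigl(\frac1{\beta_i}-\frac1{\widehat\beta}\bigr)-E$, with $E$ the error term in \eqref{e:betacltrr}. Here $\langle\bmxi,\bmv_i\rangle=\langle\bmZ,\bmv_i^{\otimes k}\rangle\sim\cN(0,1/n)$, so $-\sqrt n\langle\bmxi,\bmv_i\rangle\sim\cN(0,1)$; moreover $\widehat\beta/\beta_i\to1$ in probability, $|\langle\bmxi,\bmv_i\rangle|=\OO(\log n/\sqrt n)$ with high probability, and therefore $\bigl|\frac1{\beta_i}-\frac1{\widehat\beta}\bigr|=\OO_\bP\bigl(\frac{\log n}{\sqrt n\beta_i^2}+\frac1{\beta_i^3}+\frac{|E|}{\beta_i^2}\bigr)$, so that multiplying through by $\sqrt n$ and again using $|\beta_1|\ge n^{(k-2)/2+\varepsilon}$ with $k\ge3$ leaves only the term $-\sqrt n\langle\bmxi,\bmv_i\rangle$.

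Finally, to replace $(\beta_i,\bmv_i)$ by the estimators $(\widehat\beta,\widehat\bmv)$ in the normalizing factors of \eqref{e:bba}, I would extract from \eqref{e:cltrr} the consistency $\langle\bma,\widehat\bmv\rangle\to\langle\bma,\bmv_i\rangle$ in probability (the fluctuation $\langle\bma^\perp,\bmxi\rangle/\beta_i$ is $\OO_\bP(\log n/(\sqrt n|\beta_1|))$), whence $\langle\bma,(\bmI_n-\widehat\bmv\widehat\bmv^\top)\bma\rangle=1-\langle\bma,\widehat\bmv\rangle^2\to1-\langle\bma,\bmv_i\rangle^2$ in probability, and note that the discrepancy $\bigl(1-\frac1{2\widehat\beta^2}\bigr)-\bigl(1-\frac1{2\beta_i^2}\bigr)=\OO_\bP\bigl(\frac{\log n}{\sqrt n\beta_i^3}+\frac1{\beta_i^4}\bigr)$ contributes only $\OO_\bP\bigl(\frac{\log n}{\beta_i^2}+\frac{\sqrt n}{\beta_i^3}\bigr)\to0$ to the left side of \eqref{e:bba}. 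Writing that left side as $\frac{\widehat\beta}{\beta_i}\cdot\frac{\|\bma^\perp\|}{\sqrt{1-\langle\bma,\widehat\bmv\rangle^2}}\cdot\frac{\sqrt n\langle\bma^\perp,\bmxi\rangle}{\|\bma^\perp\|}+\oo_\bP(1)$, whose first two factors tend to $1$ in probability (which requires $\langle\bma,\bmv_i\rangle$ to stay bounded away from $\pm1$, the generic situation), Slutsky's theorem yields \eqref{e:bba}, while \eqref{e:bbc} follows at once from the preceding paragraph, as it involves no further normalization. The cases \ref{c:case2rr}, \ref{c:case3rr}, \ref{c:case4rr} follow verbatim after substituting the corresponding limit of $(\bmX[\bmu_t^{\otimes k}],\bmu_t)$ from Theorem~\ref{t:randominit}. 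I expect the only genuinely delicate point to be the term-by-term decay checks in the second and third paragraphs; the remainder is elementary bookkeeping together with an application of Slutsky's theorem.
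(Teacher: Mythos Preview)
Your proposal is correct and follows essentially the same route as the paper. The paper's proof of this corollary is very terse: for \eqref{e:bba} it simply observes that when $k\ge3$ and $|\beta_1|\ge n^{(k-2)/2+\varepsilon}$ the side condition \eqref{e:avboundk} of Corollary~\ref{coro3} is automatically satisfied (since $|\beta_1|^3/\sqrt n\to\infty$ while $|\langle\bma,\bmv_i\rangle|\le1$), and then quotes Corollary~\ref{coro3}; for \eqref{e:bbc} it argues exactly as you do, replacing $(k/2-1)/\beta_i$ by $(k/2-1)/\widehat\beta$ in \eqref{e:betacltrr} and checking the induced error is $o(1/\sqrt n)$. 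Your write-up unpacks the Slutsky step and the term-by-term decay check more explicitly than the paper does, and you correctly flag the implicit nondegeneracy requirement $|\langle\bma,\bmv_i\rangle|<1$, but the underlying argument is the same.
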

We want to emphasize the difference between Corollary \ref{coro1} and Corollary \ref{coro4}. In the rank one case, the estimators $\widehat \beta$ and $\langle \bma, \widehat \bmv\rangle$ are asymptotically Gaussian. In the multi-rank spiked tensor model with $k\geq 3$, those estimators $\widehat \beta$ and $\langle \bma, \widehat \bmv\rangle$ are no longer Gaussian. Instead, they are asymptotically a mixture Gaussian with mixture weights $p_1\geq p_2\geq p_3\geq\cdots$.

\begin{corollary}\label{coro5}
Given the asymptotic significance level $\alpha$, and let $z_\al=\Phi(1-\alpha/2)$ where $\Phi(\cdot)$ is the CDF of a standard Gaussian.
Under the conditions in Corollary \ref{coro4}, 
in Case \ref{c:case1rr} of Theorem \ref{t:randominit}, we can find the asymptotic confidence interval of $\langle\bma, \bmv_i\rangle$  as
\begin{align*}
\frac{1}{1-1/(2\widehat\beta^2)}\left[\langle \bma,\widehat\bmv \rangle-z_{\alpha}\frac{\sqrt{\langle\bma, (\bmI_n-\widehat\bmv\widehat\bmv^\top)\bma\rangle}}{\sqrt{n}\widehat\beta},\
\langle\bma, \widehat\bmv\rangle+z_{\alpha}\frac{\sqrt{\langle \bma, (\bmI_n-\widehat\bmv\widehat\bmv^\top)\bma\rangle}}{\sqrt{n}\widehat\beta}
\right]
\end{align*}
and the asymptotic confidence interval of $\beta_i$ as
\begin{align*}
\left[\widehat\beta+\frac{k/2-1}{\widehat\beta}-\frac{z_\al}{\sqrt n}, \quad
\widehat\beta+\frac{k/2-1}{\widehat\beta}+\frac{z_\al}{\sqrt n}\right].
\end{align*}
We have similar results for Cases \ref{c:case2r}, \ref{c:case3r}, \ref{c:case4r}, by changing $(\beta_{i}, \bmv_{i})$
above to the corresponding limit.
\end{corollary}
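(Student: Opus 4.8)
The plan is to invert the two central limit theorems already furnished by Corollary~\ref{coro4}. Fix $1\leq i\leq r$ and condition on the event $\cE_i$ that the power iteration converges to $\bmv_i$ (Case~\ref{c:case1rr} of Theorem~\ref{t:randominit}), which occurs with probability $p_i+\OO(1/\sqrt\kappa)$; all coverage statements below are to be read conditionally on $\cE_i$. Recall that $z_\al=\Phi(1-\alpha/2)$ is chosen so that $\bP(|\cN(0,1)|\leq z_\al)=1-\alpha$.

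For $\langle\bma,\bmv_i\rangle$ I would start from the pivot in \eqref{e:bba}. First I note that, from \eqref{e:betacltrr}, with probability $1-\OO(n^{-c(\log n)^2})$ one has $|\widehat\beta|\gtrsim|\beta_1|\geq n^{\varepsilon}$, so $1-1/(2\widehat\beta^2)>0$ and, in Case~\ref{c:case1rr}, $\widehat\beta>0$, while trivially $\langle\bma,(\bmI_n-\widehat\bmv\widehat\bmv^\top)\bma\rangle\geq 0$; hence the pivot is well defined. By \eqref{e:bba}, the event
\[
\left|\frac{\sqrt{n}\,\widehat\beta}{\sqrt{\langle\bma,(\bmI_n-\widehat\bmv\widehat\bmv^\top)\bma\rangle}}\left(\langle\bma,\widehat\bmv\rangle-\Big(1-\frac{1}{2\widehat\beta^2}\Big)\langle\bma,\bmv_i\rangle\right)\right|\leq z_\al
\]
has asymptotic probability $1-\alpha$. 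Multiplying through by the positive factor $\sqrt{\langle\bma,(\bmI_n-\widehat\bmv\widehat\bmv^\top)\bma\rangle}/(\sqrt n\,\widehat\beta)$ and then by $1/(1-1/(2\widehat\beta^2))>0$ — neither step reverses the inequalities — rearranges this event into the statement that $\langle\bma,\bmv_i\rangle$ lies in the first interval displayed in the corollary, yielding the claimed asymptotic coverage $1-\alpha$.

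For $\beta_i$ I would instead start from \eqref{e:bbc}: the event $|\sqrt n(\beta_i-\widehat\beta-(k/2-1)/\widehat\beta)|\leq z_\al$ has asymptotic probability $1-\alpha$, and isolating $\beta_i$ immediately gives membership in the second interval. The results for Cases~\ref{c:case2rr}, \ref{c:case3rr}, \ref{c:case4rr} follow by replacing $(\beta_i,\bmv_i)$ by the corresponding limit of Theorem~\ref{t:randominit}, exactly as in Corollary~\ref{coro4}. The whole argument is a routine inversion of a pivot; the only delicate bookkeeping is combining the conditioning on $\cE_i$ with the high-probability events from \eqref{e:betacltrr} that keep $\widehat\beta$ bounded away from $0$ (so that $1-1/(2\widehat\beta^2)>0$ and the rescaling preserves the inequalities), which is where I expect the minor friction — there is no genuine analytic obstacle beyond the central limit theorems themselves.
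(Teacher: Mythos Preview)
Your proposal is correct and follows exactly the same approach as the paper: invert the two pivots \eqref{e:bba} and \eqref{e:bbc} from Corollary~\ref{coro4} by bounding their absolute values at $z_\alpha$. You are simply more explicit than the paper about the positivity of $\widehat\beta$ and $1-1/(2\widehat\beta^2)$ (so that the rearrangement preserves inequalities), which is a welcome but inessential refinement.
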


\section{Numerical Study}\label{s:numerical}

In this section, we conduct numerical experiments on synthetic data to demonstrate our distributional results provided in Sections \ref{se:rank1} and \ref{se:rankr}. We fix the dimension $n=600$ and rank $k=3$.

\subsection{Rank one spiked tensor model}
We begin with numerical experiments on rank one case.  
This section is devoted to numerically studying the efficiency of our estimators for the strength of signals and linear functionals of the
signals. 
We take the signal $\bmv$ a random vector sampled from the unit sphere in $\bR^n$, and the vector 
\begin{align}
\bma=\frac{1}{\sqrt 3}(\bme_{n/3}+\bme_{2n/3}+\bme_{n})
\end{align}
 For the setting without prior information of the signal, we take the initialization of our power iteration algorithm $\bmu$ a random vector sampled from the unit sphere in $\bR^n$, and the strength of signal $\beta=n^{(k-2)/2}\approx 24.495$. We plot in Figure \ref{f:rank1_1} our estimators for the strength of signals after normalization
 \begin{align}\label{e:nbeta}
\widehat\beta+\frac{k/2-1}{\widehat\beta}-\beta
 \end{align}
 and our estimators for the linear functionals of the
signals
\begin{align}\label{e:nav}
\frac{\sqrt{n}\widehat\beta}{\sqrt{\langle \bma, (\bmI_n-\widehat\bmv \widehat\bmv^\top)\bma\rangle}}\left[\big(1-\frac{1}{2\widehat\beta^2}\big)^{-1}\langle \bma, \widehat\bmv\rangle-\langle\bma, \bmv\rangle\right]
\end{align}
as in Corollary \ref{coro1}.
\begin{figure}
\begin{center}
 \includegraphics[scale=0.8,trim={0 0 0 0},clip]{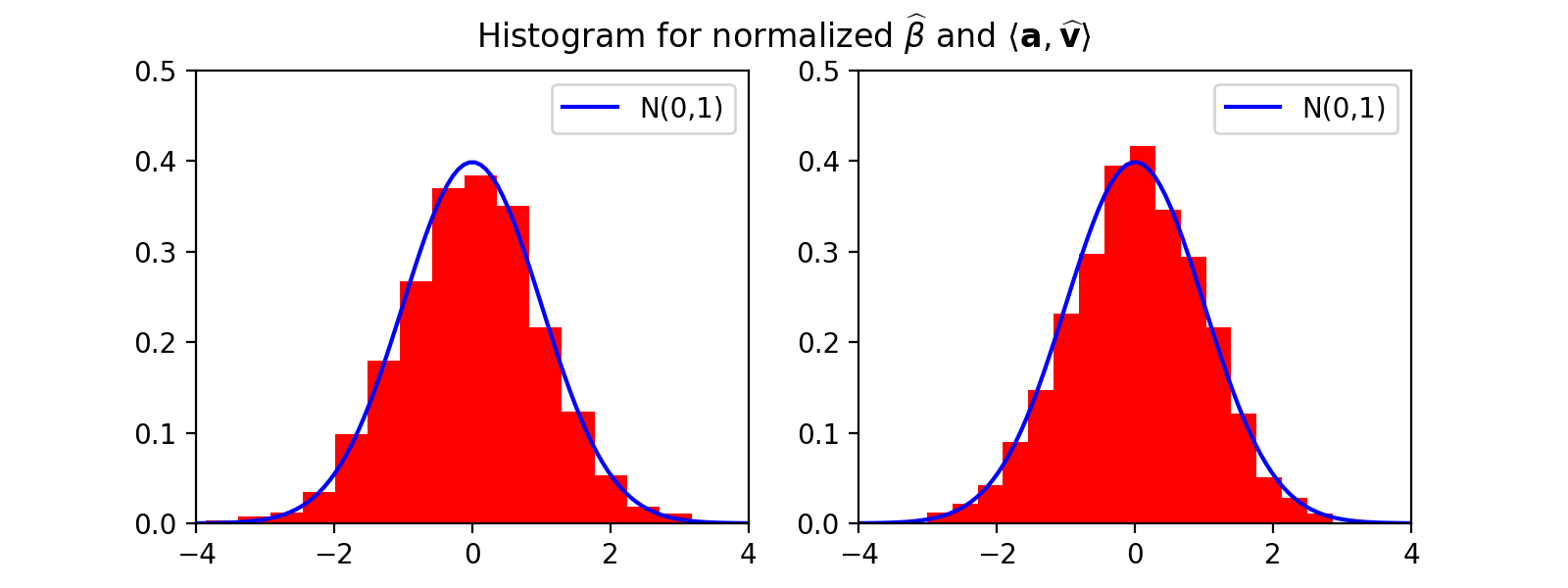}
 \caption{The empirical density of normalized $\widehat \beta$ as in \eqref{e:nbeta} (left panel), and normalized $\langle \bma, \widehat \bmv\rangle$ as in \eqref{e:nav}.  The results are reported over $2000$ independent trials where the initialization of our power iteration algorithm $\bmu$ a random vector sampled from the unit sphere in $\bR^n$, and the strength of signal $\beta=n^{(k-2)/2}\approx 24.495$.}
 \label{f:rank1_1}
 \end{center}
 \end{figure}
 
For the setting that there is prior information of the signal, we take the initilization of our power iteration algorithm $\bmu=(\bmv+\bmw)/\|\bmv+\bmw\|_2$, where $\bmv$ is a random vector sampled from the unit sphere in $\bR^n$.
We plot  our estimators for the strength of signals after normalization \eqref{e:nbeta}
 and our estimators for the linear functionals of the
signals \eqref{e:nav} for $\beta=5$ in Figure \ref{f:rank1_2}, and for $\beta=10$ in Figure \ref{f:rank1_3}.
\begin{figure}
\begin{center}
 \includegraphics[scale=0.8,trim={0 0 0 0},clip]{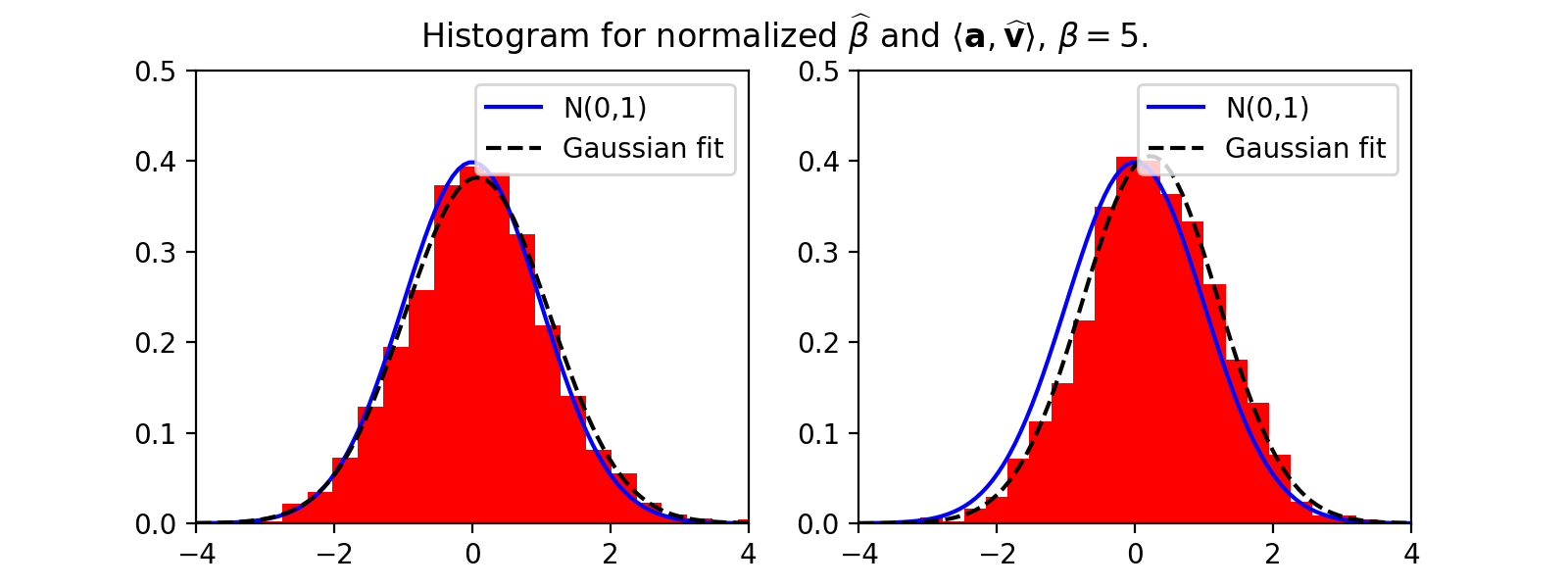}
 \caption{The empirical density of normalized $\widehat \beta$ as in \eqref{e:nbeta} (left panel), and normalized $\langle \bma, \widehat \bmv\rangle$ as in \eqref{e:nav}.  The results are reported over $2000$ independent trials where the initialization of our power iteration algorithm $\bmu$ a random vector sampled from the unit sphere in $\bR^n$, and the strength of signal $\beta=5$.}
 \label{f:rank1_2}
 \end{center}
 \end{figure}
 Although our Theorem \ref{t:main} and Corollary \ref{coro1} requires $|\beta \langle \bmu, \bmv \rangle^{k-2}|\geq n^{\varepsilon}\gg 1$, Figures \ref{f:rank1_2} and \ref{f:rank1_3} indicate that our estimators $\widehat \beta$ and $\langle \bma, \widehat\bmv\rangle$ are asymptotically Gaussian even with small $\beta$, i.e. $\beta=5,10$. Theorem \ref{t:main} also indicates that error term in Corollary \eqref{coro1}, i.e. the error term in \eqref{coro:clt1}, is of order $1/|\beta|$. This matches with our simulation. In Figures \ref{f:rank1_2} and \ref{f:rank1_3}, the the difference between the Gaussian fit of our empirical density and the density of $\cN(0,1)$ decreases as $\beta$ increases from $5$ to $10$.
 \begin{figure}
\begin{center}
 \includegraphics[scale=0.8,trim={0 0 0 0},clip]{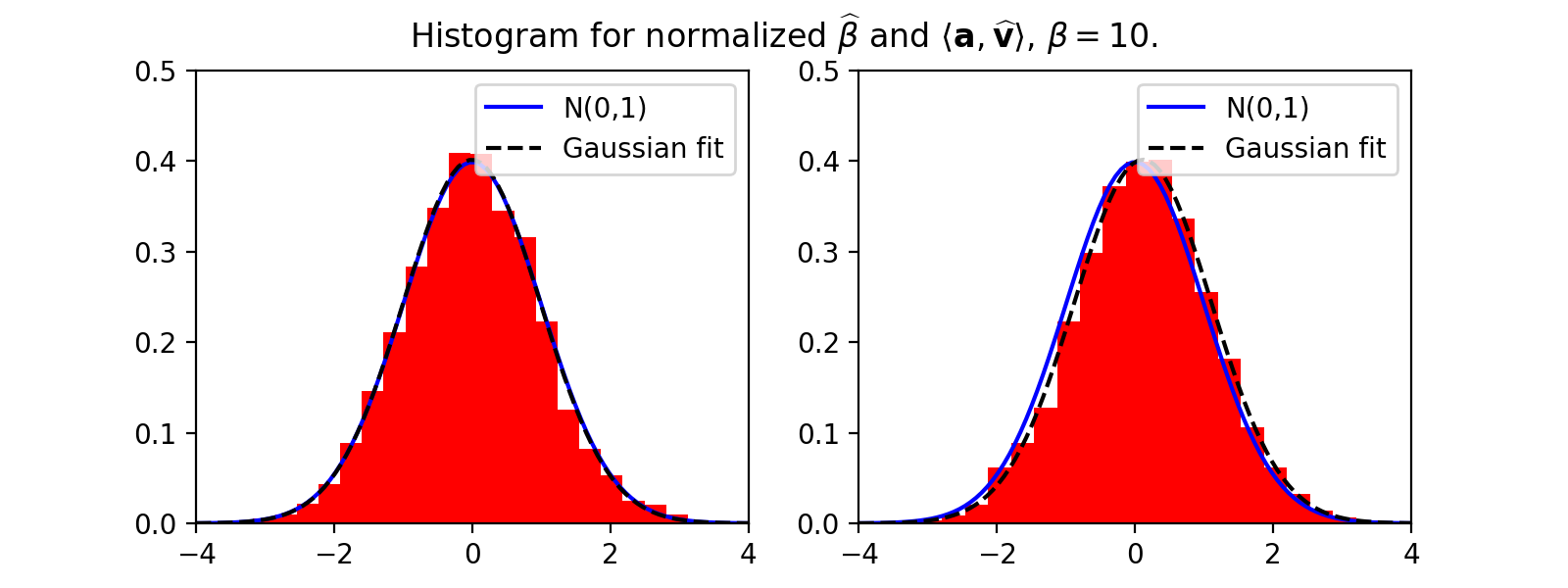}
 \caption{The empirical density of normalized $\widehat \beta$ as in \eqref{e:nbeta} (left panel), and normalized $\langle \bma, \widehat \bmv\rangle$ as in \eqref{e:nav}.  The results are reported over $2000$ independent trials where the initialization of our power iteration algorithm $\bmu$ a random vector sampled from the unit sphere in $\bR^n$, and the strength of signal $\beta=10$.}
 \label{f:rank1_3}
 \end{center}
 \end{figure}

 In Figure \ref{f:rank1_4}, we test the threshold signal-to-noise ratio  for the power iteration algorithm.
Our Theorems \ref{t:main} and \ref{t:diverge} state that for $|\beta\langle \bmu_0,\bmv\rangle^{k-2}|\gg1$ 
 tensor power iteration recovers the signal $\bmv$, and fails when  $|\beta\langle \bmu_0,\bmv\rangle^{k-2}|\ll1$. Especially for random initialization, we have that $|\langle \bmu_0,\bmv\rangle|\asymp 1/\sqrt n$. Our Theorems state that for $|\beta|\gg n^{(k-2)/2}$ 
 tensor power iteration recovers the signal $\bmv$, and fails when  $|\beta|\ll n^{(k-2)/2}$.
Take $k=3$.  In the left panel of Figure \ref{f:rank1_4}, we test tensor power iteration with random initialization for various dimensions $n\in\{200,300,400,500,600\}$ and signal strength $\beta/\sqrt n\in (0,2]$. 
  In the right panel of Figure \ref{f:rank1_4}, we test tensor power iteration with fixed small $\beta=3$ and informative initialization $\beta\langle \bmu_0,\bmv\rangle\in(0,2]$ for various dimensions $n\in\{200,300,400,500,600\}$.  The outputs $\langle \widehat\bmv, \bmv\rangle$ are averaged over $60$ independent trials.

  \begin{figure}
\begin{center}
 \includegraphics[scale=0.7,trim={0 0 0 0},clip]{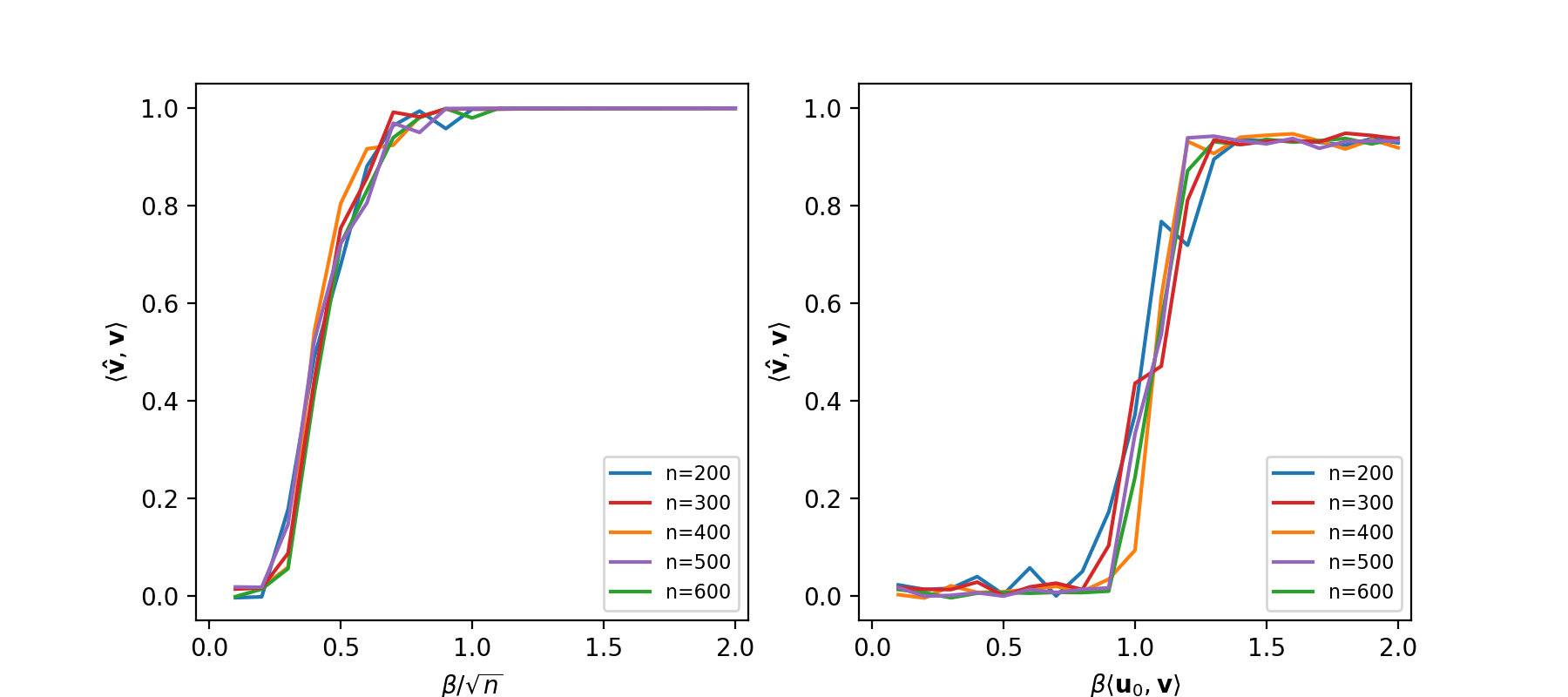}
 \caption{Output of tensor power iteration with random initialization for various signal strength $\beta/\sqrt n\in (0,2]$ (left panel), and  tensor power iteration with fixed small $\beta=3$ and informative initialization $\beta\langle \bmu_0,\bmv\rangle\in(0,2]$.}
 \label{f:rank1_4}
 \end{center}
 \end{figure}

\subsection{Rank-$r$ spiked tensor model}

In this section, we conduct numerical experiments to demonstrate our distributional results for the multi-rank spiked tensor model. We consider the simplest case that there are two spikes with signals $\bmv_1, \bmv_2$, such that they are uniformly sampled from the unit sphere in $\bR^n$ and orthogonal to each other $\langle\bmv_1,\bmv_2\rangle=0$,  and the vector 
\begin{align}
\bma=\frac{1}{\sqrt 3}(\bme_{n/3}+\bme_{2n/3}+\bme_{n}).
\end{align}

 \begin{figure}
\begin{center}
 \includegraphics[scale=0.7,trim={0 0 0 0},clip]{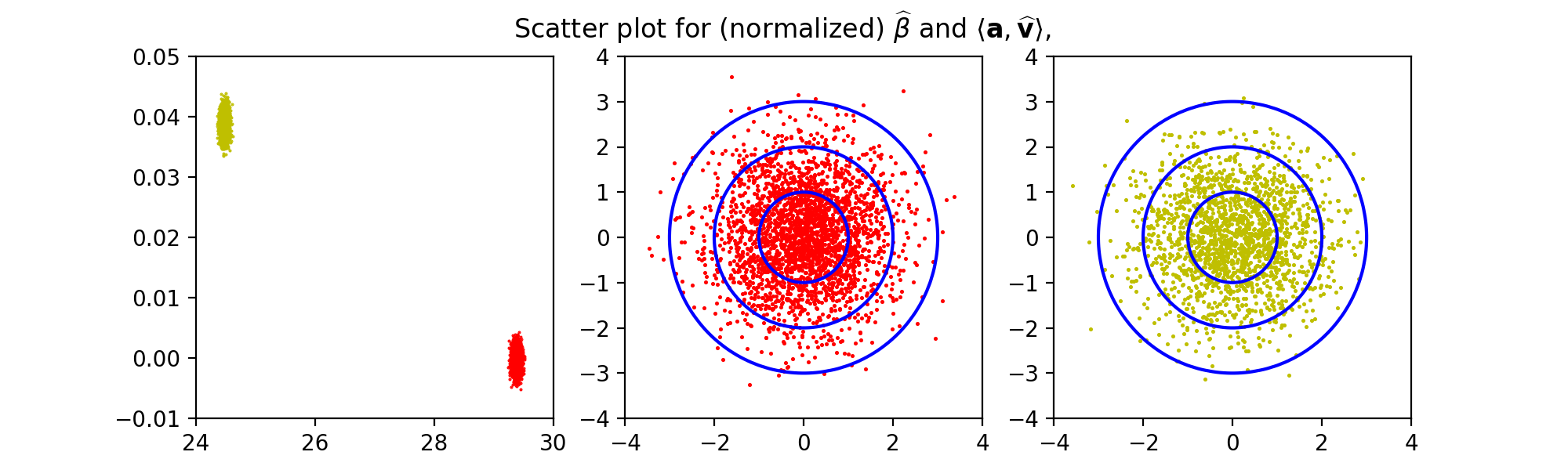}
 \caption{Scatter plot of $(\widehat \beta,\langle \bma, \widehat \bmv\rangle)$ (first panel),
 the normalized $(\widehat \beta,\langle \bma, \widehat \bmv\rangle)$ as in \eqref{e:firstcluster} for the cluster corresponding to $(\beta_1, \langle \bma, \bmv_1\rangle)$ (second panel),  the normalized $(\widehat \beta,\langle \bma, \widehat \bmv\rangle)$ as in \eqref{e:secondcluster} for the cluster corresponding to $(\beta_2, \langle \bma, \bmv_2\rangle)$. The contour plot is a standard $2$-dim Gaussian distribution, at $1,2,3$ standard deviation.
  The results are reported over $5000$ independent trials where the initialization of our power iteration algorithm $\bmu$ a random vector sampled from the unit sphere in $\bR^n$.}
 \label{f:rank1_4}
 \end{center}
 \end{figure}
 
We test the setting that there is no prior information of the signal. We take the strength of signals $\beta_1=1.2\times n^{(k-2)/2}\approx 29.394$ and $\beta_2= n^{(k-2)/2}\approx 24.495$ and the initialization of our power iteration algorithm $\bmu$ a random vector sampled from the unit sphere in $\bR^n$.  We scatter plot in Figure \ref{f:rank1_4} our estimator $\widehat\beta$ for the strength of signals, and  our estimator $\langle \bma, \widehat \bmv\rangle$ for the linear functionals of the
signals over $5000$ independent trials. As seen in the first panel of Figure \ref{f:rank1_4}, our estimators $(\widehat\beta, \langle\bma, \widehat\bmv \rangle)$ form two clusters, centered around $(\beta_1,  \langle\bma, \widehat\bmv_1\rangle)\approx(29.394, 0.000)$ and $(\beta_2,  \langle\bma, \widehat\bmv_2\rangle)\approx(24.495, 0.039)$. In the second and third panels, we zoom in, and scatter plot for the cluster corresponding to $(\beta_1,  \langle\bma, \widehat\bmv_1\rangle)\approx(29.394, 0.000)$
 \begin{align}\label{e:firstcluster}
\widehat\beta+\frac{k/2-1}{\widehat\beta}-\beta, \quad \frac{\sqrt{n}\widehat\beta_1}{\sqrt{\langle \bma, (\bmI_n-\widehat\bmv \widehat\bmv^\top)\bma\rangle}}\left[\big(1-\frac{1}{2\widehat\beta^2}\big)^{-1}\langle \bma, \widehat\bmv\rangle-\langle\bma, \bmv_1\rangle\right],
 \end{align}
 and scatter plot for the cluster corresponding to $(\beta_2,  \langle\bma, \widehat\bmv_2\rangle)\approx(24.495, 0.039)$
\begin{align}\label{e:secondcluster}
\widehat\beta+\frac{k/2-1}{\widehat\beta}-\beta_2,\quad \frac{\sqrt{n}\widehat\beta}{\sqrt{\langle \bma, (\bmI_n-\widehat\bmv \widehat\bmv^\top)\bma\rangle}}\left[\big(1-\frac{1}{2\widehat\beta^2}\big)^{-1}\langle \bma, \widehat\bmv\rangle-\langle\bma, \bmv_2\rangle\right].
\end{align}
As predicted by our Theorem \ref{t:randominit}, both clusters are asymptotically Gaussian, and the normalized estimators matches pretty well with the contour plot of standard $2$-dim Gaussian distribution, at $1,2,3$ standard deviation.

 We plot in Figure \ref{f:rank1_5} our estimators for the strength of signals and the linear functionals of the
signals after normalization, for the first cluster \eqref{e:firstcluster}, and for the second cluster \eqref{e:secondcluster}.

  \begin{figure}
\begin{center}
 \includegraphics[scale=0.8,trim={0 0 0 0},clip]{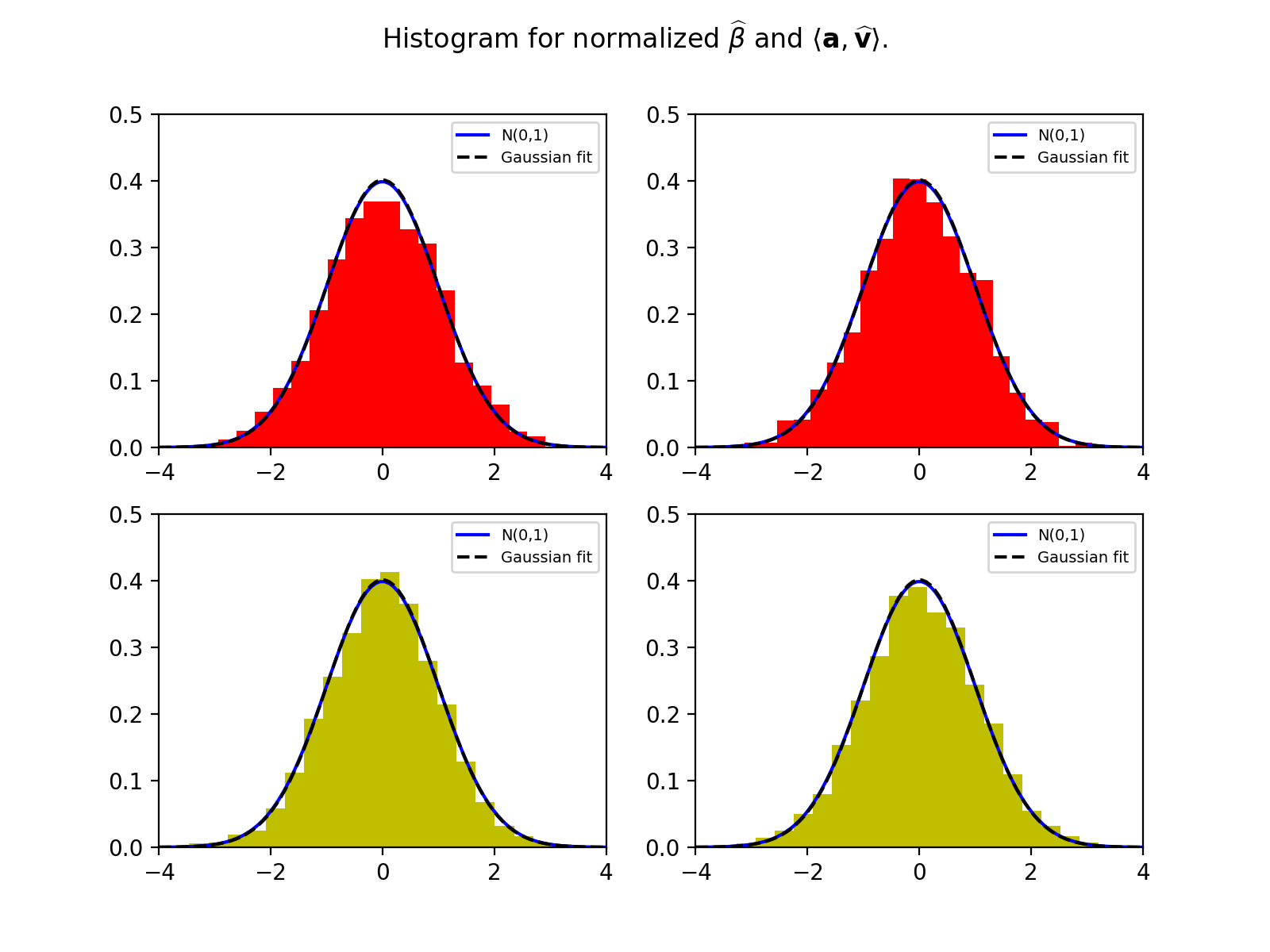}
 \caption{The empirical density of 
 the normalized $(\widehat \beta,\langle \bma, \widehat \bmv\rangle)$ as in \eqref{e:firstcluster} for the cluster corresponding to $(\beta_1, \langle \bma, \bmv_1\rangle)$ (second panel),  the normalized $(\widehat \beta,\langle \bma, \widehat \bmv\rangle)$ as in \eqref{e:secondcluster} for the cluster corresponding to $(\beta_2, \langle \bma, \bmv_2\rangle)$. 
  The results are reported over $5000$ independent trials where the initialization of our power iteration algorithm $\bmu$ a random vector sampled from the unit sphere in $\bR^n$.}
 \label{f:rank1_5}
 \end{center}
 \end{figure}

 In Table \eqref{t:table},  for each $n\in \{50,100,200,400,600,800\}$ and $k=3$, we take the strength of signals $\beta_1= n^{(k-2)/2}$ and $\beta_2= 1.2\times n^{(k-2)/2}$. Over  $1000$ independent trials for power iteration with random initialization for each $n$, we estimate the percentage $\widehat p_1$ of estimators converging to $\beta_1$, and the percentage $\widehat p_2$ of estimators converging to $\beta_2$. Our theoretical values are 
 \begin{align*}
 p_1=\bP(|\beta_1\langle \bmu, \bmv_1\rangle|>|\beta_2\langle \bmu, \bmv_2\rangle|)\approx 0.44,\\
  p_2=\bP(|\beta_1\langle \bmu, \bmv_1\rangle|<|\beta_2\langle \bmu, \bmv_2\rangle|)\approx 0.56.\\
 \end{align*}
 We also exam the numerical coverage rates for our $95\%$ confidence intervals over $1000$ independent trials.

\begin{table}
\centering
\begin{tabular}{ |c|c|c|c|c|c|c| } 
 \hline
  & $n=50$ & $n=100$& $n=200$ & $n=400$ & $n=600$ & $n=800$ \\ 
  \hline
  $\widehat p_1$ & 0.405& 0.399& 0.421 &  0.381 & 0.422 & 0.401   \\
  \hline
  $\widehat p_2$& 0.595& 0.579 & 0.601  & 0.619 & 0.578 &  0.599  \\
  \hline
  signal $\beta_1$& 0.9136 & 0.9223& 0.9596& 0.9291 & 0.9313 &   0.9551\\
  \hline
  linear form $\langle \bma, \bmv_1\rangle$ &0.9680 & 0.9499& 0.9572 &  0.9580&   0.9668&   0.9526\\
  \hline
  signal $\beta_2$& 0.9462 &0.9334& 0.9430 &   0.9612 &  0.9602 &   0.9599\\
  \hline
  linear form $\langle \bma, \bmv_2\rangle$ &  0.9445&  0.9434  & 0.94819 & 0.9677 &  0.9533 &  0.9549    \\
 \hline
 \end{tabular}
\caption{Estimated $\widehat p_1, \widehat p_2$ over $1000$ independent trials for dimension $n\in \{50,100, 200,400,600,800\}$ (top two rows), and numerical coverage rates for our $95\%$ confidence intervals over $1000$ independent trials for dimension $n\in \{50,100,200,400,600,800\}$ (last four rows).}
\label{t:table}
\end{table}

\section{Proof of main theorems}\label{se:appendix}
\subsection{Proof of Theorems \ref{t:main} and \ref{t:diverge}}
The following lemma on the conditioning of Gaussian tensors will be repeatedly use in the remaining of this section.

\begin{lemma}\label{l:decompose}
Let $\bmZ\in \otimes^k \bR^n$ be a random Gaussian tensor. The entries of $\bmZ$ are i.i.d. standard $\cN(0,1/n)$ Gaussian random variables. Fix $\bm\tau_1, \bm\tau_2,\cdots, \bm\tau_t\in \otimes^{k-1}\bR^n$ orthonormal $(k-1)$-th order tensors, i.e. $\langle\bm\tau_i,\bm\tau_j\rangle=\delta_{ij}$, and vectors $\bm\xi_1, \bm\xi_2, \cdots, \bm\xi_t\in \bR^n$. Then the distribution of $\bmZ[\bm\tau]$ conditioned on $ \bmZ[\bm\tau_s]=\bm\xi_s$ for $1\leq s\leq t$ is
\begin{align*}
\bmZ[\bm\tau]\stackrel{\text{d}}{=}\sum_{s=1}^t \langle \bm\tau_s, \bm\tau\rangle\bm\xi_s+\tilde \bmZ\left[\bm\tau-\sum_{s=1}^t \langle \bm\tau_s, \bm\tau\rangle\bm\tau_s\right],
\end{align*}
where $\tilde \bmZ$ is an independent copy of $\bmZ$.
\end{lemma}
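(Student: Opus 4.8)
The identity is a statement about conditioning a Gaussian tensor on a family of its linear images, and the plan is to reduce it to the elementary fact that conditioning an isotropic Gaussian vector on an orthonormal family of linear functionals simply freezes the corresponding coordinates while leaving an independent Gaussian in the orthogonal complement. The first step is to slice $\bmZ$ along its first index: for $1\le i\le n$ let $\bmZ_i\in\otimes^{k-1}\bR^n$ be the $(k-1)$-tensor with entries $(\bmZ_i)_{i_1\cdots i_{k-1}}=\bmZ_{ii_1\cdots i_{k-1}}$, and define $\tilde\bmZ_i$ analogously from $\tilde\bmZ$. Then $\bmZ[\bm\eta](i)=\langle\bmZ_i,\bm\eta\rangle$ for every $\bm\eta\in\otimes^{k-1}\bR^n$, the slices $\bmZ_1,\dots,\bmZ_n$ are mutually independent (the entries of $\bmZ$ are independent), and each $\bmZ_i$ is an isotropic Gaussian vector in $\bR^{n^{k-1}}$ with covariance $(1/n)\bmI$. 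Crucially, the conditioning event $\{\bmZ[\bm\tau_s]=\bm\xi_s:1\le s\le t\}$ equals $\{\langle\bmZ_i,\bm\tau_s\rangle=\bm\xi_s(i):1\le i\le n,\ 1\le s\le t\}$, and the constraints for distinct $i$ involve disjoint slices, so the conditional law of $\bmZ$ factorizes as a product over $i$ of the conditional laws of the individual slices.

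Second, I fix one slice $W\deq\bmZ_i$ and analyze the conditional law of $\langle W,\bm\tau\rangle$ given $\langle W,\bm\tau_s\rangle=c_s$ for $1\le s\le t$. Let $P$ be the orthogonal projection of $\otimes^{k-1}\bR^n$ onto $V\deq\spn\{\bm\tau_1,\dots,\bm\tau_t\}$; since the $\bm\tau_s$ are orthonormal, $PW=\sum_{s=1}^t\langle W,\bm\tau_s\rangle\bm\tau_s$. Because $W$ is isotropic Gaussian and $P$ is an orthogonal projection, the pair $(PW,(I-P)W)$ is jointly Gaussian with zero cross-covariance, hence independent, and $(I-P)W$ has the same law as $(I-P)\tilde W$ for an independent copy $\tilde W$ of $W$. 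Thus, conditionally on $\langle W,\bm\tau_s\rangle=c_s$ for all $s$, one has $W\stackrel{\text{d}}{=}\sum_{s=1}^tc_s\bm\tau_s+(I-P)\tilde W$. Pairing both sides with $\bm\tau$ and using that $P$ is self-adjoint, so $\langle(I-P)\tilde W,\bm\tau\rangle=\langle\tilde W,(I-P)\bm\tau\rangle=\langle\tilde W,\bm\tau-\sum_s\langle\bm\tau_s,\bm\tau\rangle\bm\tau_s\rangle$, gives
\[
\langle W,\bm\tau\rangle\stackrel{\text{d}}{=}\sum_{s=1}^tc_s\langle\bm\tau_s,\bm\tau\rangle+\big\langle\tilde W,\ \bm\tau-\textstyle\sum_{s=1}^t\langle\bm\tau_s,\bm\tau\rangle\bm\tau_s\big\rangle .
\]

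Third, I reassemble the $n$ slices: taking $c_s=\bm\xi_s(i)$ and $\tilde W=\tilde\bmZ_i$ in the $i$-th coordinate and using $\bmZ[\bm\eta](i)=\langle\bmZ_i,\bm\eta\rangle$ together with the analogous identity for $\tilde\bmZ$, the display becomes, conditionally, $\bmZ[\bm\tau](i)=\big(\sum_s\langle\bm\tau_s,\bm\tau\rangle\bm\xi_s\big)(i)+\tilde\bmZ\big[\bm\tau-\sum_s\langle\bm\tau_s,\bm\tau\rangle\bm\tau_s\big](i)$; since the slice-wise conditional laws are independent and the right side is built from the independent family $\{\tilde\bmZ_i\}$, this is the claimed identity in law. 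There is no serious obstacle here; the only points to keep straight are that orthonormality of the $\bm\tau_s$ is exactly what makes $PW$ take the stated explicit form and what turns the vanishing cross-covariance $P(I-P)=0$ into genuine independence, and that a single independent copy $\tilde\bmZ$ suffices simultaneously for all $\bm\tau$ because $(I-P)\tilde\bmZ_i$ already determines the joint conditional law of $\{\langle\bmZ_i,\bm\tau\rangle\}_{\bm\tau}$. As an independent check one can verify the identity at the level of first and second moments: everything is jointly Gaussian, the cross-covariance of $\bmZ[\bm\tau]$ and $\bmZ[\bm\tau']$ is $(1/n)\langle\bm\tau,\bm\tau'\rangle\bmI_n$, the $\bmZ[\bm\tau_s]$ are uncorrelated with covariance $(1/n)\bmI_n$, so the conditional mean is $\sum_s\langle\bm\tau_s,\bm\tau\rangle\bm\xi_s$ and the residual covariance is $(1/n)\|\bm\tau-\sum_s\langle\bm\tau_s,\bm\tau\rangle\bm\tau_s\|^2\bmI_n$, matching the covariance of $\tilde\bmZ[\bm\tau-\sum_s\langle\bm\tau_s,\bm\tau\rangle\bm\tau_s]$.
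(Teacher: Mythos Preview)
Your proof is correct and rests on the same idea as the paper's: decompose $\bm\tau$ into its projection onto $\spn\{\bm\tau_1,\dots,\bm\tau_t\}$ and the orthogonal remainder, then use that for an isotropic Gaussian these two pieces of $\bmZ[\cdot]$ are independent. The paper simply writes the decomposition \eqref{e:decompose} and asserts the independence in one line, whereas you make that independence explicit by slicing $\bmZ$ along its first index and invoking the standard $PW\perp(I-P)W$ fact coordinatewise; this is a more detailed execution of the same argument rather than a different route.
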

\begin{proof}[Proof of Lemma \ref{l:decompose}]
For any $(k-1)$-th order tensor $\bmtau$, viewed as a vector in $\bR^{n^{k-1}}$, we can decompose it as the projection on the span of $\bmtau_1, \bmtau_2,\cdots, \bmtau_t$ and the orthogonal part
\begin{align}\label{e:decompose}
    \bmtau=\sum_{s=1}^t \langle \bm\tau_s, \bm\tau\rangle\bmtau_s+ \left(\bm\tau-\sum_{s=1}^t \langle \bm\tau_s, \bm\tau\rangle\bm\tau_s\right).
\end{align}
Using the above decomposition and $\bmZ[\bmtau_s]=\bmxi_s$, we can write $\bmZ[\bm\tau]$ as
\begin{align}\label{e:decomp}
  \bmZ[\bm\tau]\stackrel{\text{d}}{=}\sum_{s=1}^t \langle \bm\tau_s, \bm\tau\rangle\bm\xi_s+ \bmZ\left[\bm\tau-\sum_{s=1}^t \langle \bm\tau_s, \bm\tau\rangle\bm\tau_s\right],
\end{align}
and the first sum and the second term on the righthand side of \eqref{e:decomp} are independent. The claim \eqref{e:decompose} follows.
\end{proof}

\begin{proof}[Proof of Theorem \ref{t:main}]
We define an auxiliary iteration, $\bmy_0=\bmu$ and
\begin{align}\label{e:recur}
\bmy_{t+1}=\bmX[\bmy_t^{\otimes(k-1)}].
\end{align}
Then with $\bmy_t$, our original power iteration \eqref{e:power} is given by $\bmu_t=\bmy_t/\|\bmy_t\|_2$.

Let $\bmxi=\bmZ[\bmv^{\otimes(k-1)}]\in \bR^n$. Then the entries of $\bmxi$ are given by
\begin{align}
\bmxi(i)=\bmZ[ \bmv^{\otimes(k-1)}](i)=\langle \bmZ,\bme_i\otimes \bmv^{\otimes(k-1)}\rangle=\sum_{i_1,i_2,\cdots, i_{k-1}\in\qq{1, n}}\bmZ_{i i_1i_2\cdots i_{k-1}}\bmv(i_1)\bmv(i_2)\cdots \bmv(i_{k-1}).
\end{align}
From the expression, $\bmxi(i)$ is
a linear combination of Gaussian random variables, itself is also a Gaussian. Moreover, 
these entries $\bmxi(i)$ are i.i.d. Gaussian variables with mean zero and variance $1/n$:
\begin{align}
\bE[\bm\xi(i)^2]=\sum_{i_1,i_2,\cdots, i_{k-1}\in\qq{1, n}}\bE[\bmZ^2_{i i_1i_2\cdots i_{k-1}}]\bmv(i_1)^2 \bmv(i_2)^2\cdots \bmv(i_{k-1})^2=\frac{1}{n}.
\end{align}

We can compute $\bmy_t$ iteratively: $\bmy_1$ is given by
\begin{align}\label{e:y1}
\bmy_1=\bmX[\bmy_0^{\otimes(k-1)}]=\beta\langle\bmy_0,\bmv\rangle^{k-1}\bmv+\bmZ[\bmy_0^{\otimes(k-1)}].
\end{align}
For the last term on the righthand side of \eqref{e:y1}, we can decompose $\bmy_0^{\otimes(k-1)}$ as a projection on $\bmv^{\otimes(k-1)}$ and its orthogonal part:
\begin{align}
\bmy_0^{\otimes(k-1)}=\langle \bmy_0, \bmv\rangle^{k-1}\bmv^{\otimes(k-1)}
+\sqrt{1-\langle \bmy_0, \bmv\rangle^{2(k-1)}}\bmtau_0,
\end{align}
where $\bmtau_0\in \otimes^{(k-1)}\bR^n$ and $\langle \bmv^{\otimes (k-1)}, \bm\tau_0\rangle=0$, $\langle \bmtau_0, \bm\tau_0\rangle=1$. Thanks to Lemma \ref{l:decompose}, conditioning on $\bmxi=\bmZ[\bmv^{\otimes(k-1)}]$, $\bmxi_1=\bmZ[\bmtau_0]$ has the same law as $\tilde \bmZ[\bmtau_0]$, where $\tilde \bmZ$ is an independent copy of $\bmZ$. Since $\langle \bmtau_0, \bm\tau_0\rangle=1$, $\bm\xi_1$ is a Gaussian vector with each entry $\cN(0,1/n)$. With those notations we can rewrite the expression \eqref{e:y1} of $\bmy_1$  as
\begin{align}\label{e:y1c}
\bmy_1=\beta\langle\bmy_0,\bmv\rangle^{k-1}\bmv
+\langle \bmy_0, \bmv\rangle^{k-1}\bmxi
+\sqrt{1-\langle \bmy_0, \bmv\rangle^{2(k-1)}}\bmxi_1.
\end{align}

In the following we show that: 

\begin{claim}\label{c:expression}
We can compute $\bmy_1, \bmy_2, \bmy_3, \cdots, \bmy_t$ inductively. The Gram-Schmidt orthonormalization procedure gives an orthogonal base of
$\bmv^{\otimes (k-1)}, \bmy_0^{\otimes (k-1)}, \bmy_1^{\otimes (k-1)}, \cdots, \bmy_{t-1}^{\otimes (k-1)}$ as:
\begin{align}\label{e:base}
\bmv^{\otimes (k-1)}, \bmtau_0, \bmtau_1, \cdots, \bmtau_{t-1}.
\end{align}
Let $\bmxi_{s+1}=\bmZ[\bmtau_s]$ for $0\leq s\leq t-1$. 
Conditioning on $\bmxi=\bmZ[\bmv^{\otimes(k-1)}]$ and $\bmxi_{s+1}=\bmZ[\bmtau_s]$ for $0\leq s\leq t-2$, $\bmxi_{t}=\bmZ[\bmtau_{t-1}]$ is an independent Gaussian vector, with each entry $\cN(0,1/n)$.
Then  $\bmy_t$ is in the following form
\begin{align}\label{e:btforms}
\bmy_t=a_t \bmv+b_t \bmw_t+c_t\bmxi_t, \quad b_t \bmw_t=b_{t0}\bmxi+b_{t1}\bmxi_1+\cdots+b_{tt-1}\bmxi_{t-1},
\end{align}
where $\|\bmw_t\|_2=1$.  
\end{claim}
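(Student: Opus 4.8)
The plan is to argue by induction on $t$, using Lemma \ref{l:decompose} together with the Gram--Schmidt procedure as the engine of the induction. The base case $t=1$ is already contained in \eqref{e:y1c}: reading off that identity, one sets $a_1 = \beta\langle\bmy_0,\bmv\rangle^{k-1}$, $b_{10} = \langle\bmy_0,\bmv\rangle^{k-1}$ (so that $b_1\bmw_1 = b_{10}\bmxi$ with $\bmw_1 = \bmxi/\|\bmxi\|_2$), $c_1 = \sqrt{1-\langle\bmy_0,\bmv\rangle^{2(k-1)}}$, and $\bmxi_1 = \bmZ[\bmtau_0]$, where $\bmtau_0$ is the normalized part of $\bmy_0^{\otimes(k-1)}$ orthogonal to $\bmv^{\otimes(k-1)}$; the conditional law of $\bmxi_1$ given $\bmxi$ is exactly Lemma \ref{l:decompose} applied with the single orthonormal tensor $\bmv^{\otimes(k-1)}$.

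For the inductive step, suppose the statement holds up to time $t$, so that $\bmv^{\otimes(k-1)}, \bmtau_0, \dots, \bmtau_{t-1}$ is the Gram--Schmidt basis of $\bmv^{\otimes(k-1)}, \bmy_0^{\otimes(k-1)}, \dots, \bmy_{t-1}^{\otimes(k-1)}$ and $\bmy_t = a_t\bmv + b_t\bmw_t + c_t\bmxi_t$ in the stated form. Writing $\bmy_{t+1} = \bmX[\bmy_t^{\otimes(k-1)}] = \beta\langle\bmy_t,\bmv\rangle^{k-1}\bmv + \bmZ[\bmy_t^{\otimes(k-1)}]$, I decompose the rank-one tensor $\bmy_t^{\otimes(k-1)}$ along the orthonormal system $\bmv^{\otimes(k-1)}, \bmtau_0, \dots, \bmtau_{t-1}$ and its orthogonal complement: the component on $\bmv^{\otimes(k-1)}$ equals $\langle\bmy_t,\bmv\rangle^{k-1}$, the components on $\bmtau_s$ equal $\langle\bmy_t^{\otimes(k-1)},\bmtau_s\rangle$, and I let $\bmtau_t$ be the normalized orthogonal residual and $c_{t+1}$ its norm. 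Applying $\bmZ[\cdot]$ termwise and writing $\bmxi_{t+1} \deq \bmZ[\bmtau_t]$ gives
\[
\bmy_{t+1} = \beta\langle\bmy_t,\bmv\rangle^{k-1}\bmv + \langle\bmy_t,\bmv\rangle^{k-1}\bmxi + \sum_{s=0}^{t-1}\langle\bmy_t^{\otimes(k-1)},\bmtau_s\rangle\,\bmxi_{s+1} + c_{t+1}\bmxi_{t+1}.
\]
Taking $a_{t+1} = \beta\langle\bmy_t,\bmv\rangle^{k-1}$, lumping the two middle groups into $b_{t+1}\bmw_{t+1}$ (a linear combination of $\bmxi, \bmxi_1, \dots, \bmxi_t$, which one normalizes so $\|\bmw_{t+1}\|_2 = 1$), and keeping $c_{t+1}\bmxi_{t+1}$ reproduces exactly the form \eqref{e:btforms} at level $t+1$, with explicit coefficients $b_{t+1,0} = \langle\bmy_t,\bmv\rangle^{k-1}$ and $b_{t+1,j} = \langle\bmy_t^{\otimes(k-1)},\bmtau_{j-1}\rangle$.

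It remains to justify the conditional Gaussianity of the new vector $\bmxi_{t+1}$. By the induction hypothesis, $\bmtau_0, \dots, \bmtau_{t-1}$, hence $\bmy_t$ and hence $\bmtau_t$, are measurable functions of $\bmxi, \bmxi_1, \dots, \bmxi_t$; conditioning on the values of these vectors (done sequentially, revealing $\bmxi_{s+1}=\bmZ[\bmtau_s]$ one at a time so that each $\bmtau_s$ is already frozen when $\bmxi_{s+1}$ is exposed) freezes $\bmtau_t$ as a fixed tensor orthonormal to $\bmv^{\otimes(k-1)}, \bmtau_0, \dots, \bmtau_{t-1}$, and Lemma \ref{l:decompose} applied with these orthonormal $(k-1)$-tensors then shows that $\bmZ[\bmtau_t]$ is conditionally distributed as $\tilde\bmZ[\bmtau_t]$ for an independent copy $\tilde\bmZ$ of $\bmZ$, i.e. an independent Gaussian vector with i.i.d. $\cN(0,1/n)$ entries, using $\|\bmtau_t\|_2 = 1$. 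A tower-property argument passes from this conditional statement to the unconditional one recorded in the claim.

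The step requiring the most care is the well-posedness of the Gram--Schmidt update, namely that at each stage $\bmy_t^{\otimes(k-1)}$ does not lie in $\spn\{\bmv^{\otimes(k-1)},\bmtau_0,\dots,\bmtau_{t-1}\}$, so that $c_{t+1} > 0$ and $\bmtau_t$ (equivalently $\bmxi_{t+1}$) is genuinely new. This is where one uses that $\bmy_t$ truly involves the fresh Gaussian direction $c_t\bmxi_t$ with $c_t \neq 0$ (guaranteed since $|\langle\bmy_0,\bmv\rangle| < 1$ at the base, with the residual norms staying positive inductively) and that $\bmxi_t$ is conditionally independent of everything defining $\bmv^{\otimes(k-1)},\bmtau_0,\dots,\bmtau_{t-1}$; generically $c_{t+1}>0$ holds almost surely. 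Everything else is bookkeeping of the explicit coefficients, which are precisely the quantities the subsequent asymptotic expansion of $\bmu_t = \bmy_t/\|\bmy_t\|_2$ will track.
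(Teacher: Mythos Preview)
Your proposal is correct and follows essentially the same inductive argument as the paper: decompose $\bmy_t^{\otimes(k-1)}$ along the Gram--Schmidt basis $\bmv^{\otimes(k-1)},\bmtau_0,\dots,\bmtau_{t-1}$ plus residual, apply $\bmZ[\cdot]$ termwise, and invoke Lemma~\ref{l:decompose} for the conditional law of $\bmxi_{t+1}=\bmZ[\bmtau_t]$. Your explicit treatment of the measurability needed to freeze $\bmtau_t$ before revealing $\bmxi_{t+1}$, and your remark on the almost-sure well-posedness of the Gram--Schmidt step ($c_{t+1}>0$), are points the paper leaves implicit.
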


\begin{proof}[Proof of Claim \ref{c:expression}]
The Claim  \ref{c:expression} for $t=1$ follows from \eqref{e:y1c}.  In the following, assuming Claim \ref{c:expression} holds for $t$, we prove it for $t+1$.

Let $\bmv^{\otimes (k-1)}, \bmtau_0, \bmtau_1, \cdots, \bmtau_t$ be an orthogonal base for $\bmv^{\otimes (k-1)}, \bmy_0^{\otimes (k-1)}, \bmy_1^{\otimes (k-1)}, \cdots, \bmy_t^{\otimes (k-1)}$, obtained by the Gram-Schmidt orthonormalization procedure. More precisely, given those tensors $\bmv^{\otimes (k-1)}, \bmtau_0, \bmtau_1, \cdots, \bmtau_{t-1}$, we denote 
\begin{align}\begin{split}
&b_{(t+1) 0}=\langle\bmy_t^{\otimes (k-1)},\bmv^{\otimes (k-1)}\rangle,  \quad
c_{t+1}=\langle\bmy_t^{\otimes (k-1)},  \bm\tau_{t}\rangle,\\
&
b_{(t+1)(s+1)}=\langle\bmy_t^{\otimes (k-1)},  \bm\tau_{s}\rangle, \quad
0\leq s\leq t-1.
\end{split}\end{align}
then
$b_{(t+1) 0}\bmv^{\otimes (k-1)}+b_{(t+1)1}\bm\tau_0+b_{(t+1)2}\bm\tau_1+\cdots b_{(t+1)t}\bm\tau_{t-1}$ is the projection of $\bmy_t^{\otimes (k-1)}$ on the span of  $\bmv^{\otimes (k-1)}, \bmy_0^{\otimes (k-1)}, \bmy_1^{\otimes (k-1)}, \cdots, \bmy_{t-1}^{\otimes (k-1)}$.
With those notations, we can write $\bmy_t^{\otimes (k-1)}$ as
\begin{align}\label{e:bbcoeff}
\bmy_t^{\otimes (k-1)}
=b_{(t+1) 0}\bmv^{\otimes (k-1)}+b_{(t+1)1}\bm\tau_0+b_{(t+1)2}\bm\tau_1+\cdots b_{(t+1)t}\bm\tau_{t-1}+c_{t+1} \bm\tau_{t},
\end{align}

 Using \eqref{e:btforms} and \eqref{e:bbcoeff}, 
 we notice that 
 \begin{align}
 \langle \beta\bmv^{\otimes k-1},\bmy_t^{\otimes (k-1)}\rangle
 =\beta(a_t+b_t\langle \bmw_t, \bmv\rangle+c_t\langle \bmxi_t, \bmv\rangle )^{k-1}\bmv,
 \end{align}
 and the iteration \eqref{e:recur} implies that
\begin{align}\label{e:yt+1}
\bmy_{t+1}=\beta(a_t+b_t\langle \bmw_t, \bmv\rangle+c_t\langle \bmxi_t, \bmv\rangle )^{k-1}\bmv+ b_{t+1} \bmw_{t+1}+c_{t+1}\bmZ[\bmtau_t],
\end{align}
where
\begin{align}\begin{split}
b_{t+1} \bmw_{t+1}
&=\bmZ[b_{(t+1) 0}\bmv^{\otimes (k-1)}+b_{(t+1)1}\bm\tau_0+b_{(t+1)2}\bm\tau_1+\cdots b_{(t+1)t}\bm\tau_{t-1}]\\
&=b_{(t+1) 0}\bmxi+b_{(t+1)1}\bm\xi_1+b_{(t+1)2}\bmxi_2+\cdots b_{(t+1)t}\bmxi_{t}.
\end{split}\end{align}
Since $\bmtau_t$ is orthogonal to $\bmv^{\otimes (k-1)}, \bmtau_0, \bmtau_1, \cdots, \bmtau_{t-1}$, Lemma \ref{l:decompose} implies that conditioning on $\bmxi=\bmZ[\bmv^{\otimes(k-1)}]$ and $\bmxi_{s+1}=\bmZ[\bmtau_s]$ for $0\leq s\leq t-1$, $\bmxi_{t+1}=\bmZ[\bmtau_t]$ is an independent Gaussian vector, with each entry $\cN(0,1/n)$. The above discussion gives us that
\begin{align}\label{e:at+1s}
\bmy_{t+1}=a_{t+1}\bmv+ b_{t+1} \bmw_{t+1}+c_{t+1}\bmxi_{t+1},\quad a_{t+1}=\beta(a_t+b_t\langle \bmw_t, \bmv\rangle+c_t\langle \bmxi_t, \bmv\rangle )^{k-1}.
\end{align}
In this way, for any $t\geq 0$, $\bmy_t$ is given in the form \eqref{e:btforms}.
\end{proof}

In the following, We study the case that   $\langle \bmu, \bmv\rangle>0$. The case $\langle \bmu, \bmv\rangle<0$ can be proven in exactly the same way, by simply changing $(\beta, \bmv)$ with $((-1)^{k}\beta, -\bmv)$. We prove by induction

\begin{claim}\label{c:coebound} 
For any fixed time $t$, with probability at least $1-\OO(e^{-c(\log N)^2})$ the following holds: for any $s\leq t$,
\begin{align}\begin{split}\label{e:ind}
&|a_s|\gtrsim |\beta|(|b_{s0}|+|b_{s1}|+\cdots+|b_{s(s-1)}|),\\
&|a_s|\gtrsim n^\varepsilon \max\{\bm1(k\geq 3)|c_s/\beta^{1/(k-2)}|, |c_s/\sqrt n|\}.
\end{split}\end{align}
and
\begin{align}\begin{split}\label{e:xibb}
&\|\bm\xi\| , \|\bm\xi_s\|_2 = 1+\OO(\log n/\sqrt n), \quad  |\langle \bmv, \bm\xi\rangle|,|\langle \bma, \bm\xi\rangle|, |\langle \bma, \bm\xi_s\rangle|,\\
&\|{\rm{Proj}}_{{\rm Span}\{\bmv, \bmxi,\bm\xi_1,\cdots, \cdots, \bmxi_{s-1}\}}(\bmxi_s)\|_2 \lesssim \log n/\sqrt n.
\end{split}\end{align}
\end{claim}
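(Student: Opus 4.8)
The plan is to prove the two displays of Claim~\ref{c:coebound} by induction on $s$, having fixed $t$ once and for all; I first dispose of \eqref{e:xibb}, which is a statement about Gaussian vectors only, and then run the real induction for \eqref{e:ind} on the event where \eqref{e:xibb} holds.

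For \eqref{e:xibb}, the key structural input is already in Claim~\ref{c:expression}: by Lemma~\ref{l:decompose} and the martingale structure of the Gram--Schmidt construction, each $\bmtau_j$ is a deterministic function of $\bmxi,\bmxi_1,\dots,\bmxi_j$, and conditioned on these $\bmxi_{j+1}=\bmZ[\bmtau_j]$ is a fresh $\cN(0,I_n/n)$ vector; hence $(\bmxi,\bmxi_1,\dots,\bmxi_t)$ are, jointly, $t+1$ independent $\cN(0,I_n/n)$ vectors. Thus $\|\bmxi_j\|_2^2$ is a normalized $\chi^2$ with $n$ degrees of freedom, giving $\|\bmxi_j\|_2=1+\OO(\log n/\sqrt n)$; the inner products $\langle\bmv,\bmxi_j\rangle,\langle\bma,\bmxi_j\rangle$ are $\cN(0,1/n)$, hence $\OO(\log n/\sqrt n)$ in absolute value; and, conditioning on $\bmxi,\dots,\bmxi_{s-1}$ so that ${\rm Span}\{\bmv,\bmxi,\dots,\bmxi_{s-1}\}$ is a fixed subspace of dimension at most $s+1$, the projection of $\bmxi_s$ onto it has squared norm a normalized $\chi^2$ with at most $s+1$ degrees of freedom, hence $\OO(\log n/\sqrt n)$. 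Each bound holds with probability $1-\OO(\mathrm{e}^{-c(\log n)^2})$, and since $t$ is fixed a union bound over the $\OO(t)$ events gives \eqref{e:xibb} for all $s\leq t$ simultaneously with the claimed probability.

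For \eqref{e:ind} I induct on $s$, working on the event where \eqref{e:xibb} holds, so that everything becomes deterministic once the coefficient recursions are in hand. The base case $s=1$ is read off \eqref{e:y1c}: $a_1=\beta\langle\bmu,\bmv\rangle^{k-1}$, $b_{10}=\langle\bmu,\bmv\rangle^{k-1}$, $c_1=\sqrt{1-\langle\bmu,\bmv\rangle^{2(k-1)}}$, whence $|a_1|/|b_{10}|=|\beta|$, $|a_1|=|\beta\langle\bmu,\bmv\rangle^{k-2}|\,|\langle\bmu,\bmv\rangle|\geq n^\varepsilon|\langle\bmu,\bmv\rangle|\gtrsim n^\varepsilon/\sqrt n$, and $|a_1|\gtrsim n^\varepsilon/|\beta|^{1/(k-2)}$ is just the hypothesis $|\beta\langle\bmu,\bmv\rangle^{k-2}|\geq n^\varepsilon$ rewritten. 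For the inductive step, comparing \eqref{e:yt+1}--\eqref{e:bbcoeff} gives the exact identities $a_{s+1}=\beta\langle\bmy_s,\bmv\rangle^{k-1}$ and $b_{(s+1)0}=\langle\bmy_s,\bmv\rangle^{k-1}$, so $a_{s+1}=\beta\, b_{(s+1)0}$ and the first line of \eqref{e:ind} at step $s+1$ reduces to $\sum_{j\geq1}|b_{(s+1)j}|\lesssim|b_{(s+1)0}|$. Since in $\langle\bmy_s,\bmv\rangle=a_s+\sum_j b_{sj}\langle\bmxi_j,\bmv\rangle+c_s\langle\bmxi_s,\bmv\rangle$ the inductive bounds and \eqref{e:xibb} make the last two terms $\oo(|a_s|)$ (the first because $\sum_j|b_{sj}|\lesssim|a_s|/|\beta|\leq n^{-\varepsilon}|a_s|$, the second because $|c_s|\lesssim n^{-\varepsilon}\sqrt n\,|a_s|$ while $|\langle\bmxi_s,\bmv\rangle|\lesssim\log n/\sqrt n$), we obtain $\langle\bmy_s,\bmv\rangle=a_s(1+\oo(1))$, hence $a_{s+1}=\beta a_s^{k-1}(1+\oo(1))$ and $b_{(s+1)0}=a_s^{k-1}(1+\oo(1))$.

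It then remains to estimate the coefficients $b_{(s+1)(j+1)}=\langle\bmy_s^{\otimes(k-1)},\bmtau_j\rangle$ ($0\leq j\leq s-1$) and the pivot $c_{s+1}=\|\bmy_s^{\otimes(k-1)}-{\rm Proj}_{{\rm Span}\{\bmv^{\otimes(k-1)},\bmy_0^{\otimes(k-1)},\dots,\bmy_{s-1}^{\otimes(k-1)}\}}\bmy_s^{\otimes(k-1)}\|$. Expanding $\bmy_s^{\otimes(k-1)}=(a_s\bmv+b_s\bmw_s+c_s\bmxi_s)^{\otimes(k-1)}$ by the multinomial theorem and using \eqref{e:xibb} together with a careful analysis of the Gram--Schmidt procedure, one shows: the $a_s^{k-1}\bmv^{\otimes(k-1)}$ monomial feeds only $b_{(s+1)0}$; the monomials carrying a $b_s\bmw_s$ factor (built from $\bmv$ and the old vectors $\bmxi,\dots,\bmxi_{s-1}$) are essentially absorbed into ${\rm Span}\{\bmv^{\otimes(k-1)},\bmy_0^{\otimes(k-1)},\dots,\bmy_{s-1}^{\otimes(k-1)}\}$ and contribute only $\oo(|a_s|^{k-1})$ to each $b_{(s+1)(j+1)}$; and the fresh vector $\bmxi_s$ enters only through $\bmtau_s$, producing $|c_{s+1}|\asymp|c_s|\max\{|a_s|,|c_s|\}^{k-2}$ (so the pivots stay bounded below). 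Consequently $\sum_{j\geq1}|b_{(s+1)j}|=\oo(|b_{(s+1)0}|)$, and the ratio $\rho_s:=|a_s|/|c_s|$ obeys $\rho_{s+1}\asymp|\beta|\,\rho_s\min\{1,\rho_s^{k-2}\}$. Since $|\beta|\rho_1^{k-2}\geq|\beta\langle\bmu,\bmv\rangle^{k-2}|^{k-1}\geq n^{(k-1)\varepsilon}\gg1$, the sequence $\rho_s$ is non-decreasing, so $\rho_s\geq\rho_1\gtrsim n^\varepsilon\max\{1/\sqrt n,\ \bm1(k\geq3)/|\beta|^{1/(k-2)}\}$ for all $s\leq t$, which is exactly the second line of \eqref{e:ind}. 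The main obstacle is precisely this last paragraph: the bookkeeping for the Gram--Schmidt tensors $\bmtau_0,\dots,\bmtau_{s-1}$ and the mixed multinomial monomials, where one must verify iteratively down the chain that no such monomial leaks a non-negligible component into the orthogonal complement of the current span --- this is where the near-orthonormality estimates \eqref{e:xibb} and the pivot lower bounds are used most delicately.
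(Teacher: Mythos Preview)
Your overall structure---handling \eqref{e:xibb} first via Gaussian concentration and the conditional-independence structure from Claim~\ref{c:expression}, then running a deterministic induction for \eqref{e:ind} on that event---is correct and matches the paper; so do your base case and the computation $a_{s+1}=\beta\langle\bmy_s,\bmv\rangle^{k-1}=\beta a_s^{k-1}(1+\oo(1))$.

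The substantive divergence is in the bound on the $b$-coefficients, which you rightly flag as the ``main obstacle''. You propose tracking individual multinomial monomials of $(a_s\bmv+b_s\bmw_s+c_s\bmxi_s)^{\otimes(k-1)}$ through the Gram--Schmidt tensors $\bmtau_j$. The paper avoids this entirely with a span-containment trick: since $\bmv^{\otimes(k-1)},\bmtau_0,\dots,\bmtau_{s-1}$ span the same space as $\bmv^{\otimes(k-1)},\bmy_0^{\otimes(k-1)},\dots,\bmy_{s-1}^{\otimes(k-1)}$, and this space sits inside $({\rm Span}\{\bmv,\bmxi,\bmxi_1,\dots,\bmxi_{s-1}\})^{\otimes(k-1)}$, one obtains in one stroke
\[
\Bigl(\textstyle\sum_j b_{(s+1)j}^2\Bigr)^{1/2}
\leq\bigl\|{\rm Proj}_{{\rm Span}\{\bmv,\bmxi,\dots,\bmxi_{s-1}\}}(\bmy_s)\bigr\|_2^{k-1}
=\bigl\|a_s\bmv+b_s\bmw_s+c_s\,{\rm Proj}(\bmxi_s)\bigr\|_2^{k-1}
\lesssim|a_s|^{k-1},
\]
using only the last line of \eqref{e:xibb} for the projection of $\bmxi_s$. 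No monomial bookkeeping is needed.

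Your route can in principle be completed, but as written it has two inaccuracies. First, the statement that ``the fresh vector $\bmxi_s$ enters only through $\bmtau_s$'' is not literally true: monomials containing $\bmxi_s$ do have nonzero inner product with each $\bmtau_j$ for $j<s$; what makes these contributions negligible is precisely the projection bound in \eqref{e:xibb}, and this has to be invoked explicitly (the span-containment argument above packages this automatically). Second, the two-sided estimate $|c_{s+1}|\asymp|c_s|\max\{|a_s|,|c_s|\}^{k-2}$ is neither needed nor established: a pivot \emph{lower} bound is irrelevant for Claim~\ref{c:coebound}, and your formula drops the $b_s$ contribution, which in fact dominates once $|c_s|\lesssim|a_s|/|\beta|$. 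The paper only proves and only uses the upper bound $|c_{s+1}|\leq\|\bmy_s^{\otimes(k-1)}-a_s^{k-1}\bmv^{\otimes(k-1)}\|_2\lesssim\sum_{r\geq1}|a_s|^{k-1-r}(|b_s|+|c_s|)^r$, then runs a three-case analysis on $|c_s|/|a_s|$ to propagate the second line of \eqref{e:ind}.
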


\begin{proof}[Proof of Claim \ref{c:coebound}]
From \eqref{e:y1c},  $\bmy_1=\beta\langle\bmu,\bmv\rangle^{k-1}\bmv
+\langle \bmu, \bmv\rangle^{k-1}\bmxi
+\sqrt{1-\langle \bmu, \bmv\rangle^{2(k-1)}}\bmxi_1$. We have $a_1=\beta\langle\bmu,\bmv\rangle^{k-1}$, $b_{10}=\langle \bmu, \bmv\rangle^{k-1}$, $b_1\bmw_1=\langle \bmu, \bmv\rangle^{k-1}\bm\xi$ and $c_1=
\sqrt{1-\langle \bmu, \bmv\rangle^{2(k-1)}}$.
Since $\bm\xi$ is a Gaussian vector with each entry mean zero and variance $1/n$, the concentration for chi-square distribution implies that 
\begin{align}
\|\bm\xi\|_2=\sqrt{\sum_{i=1}^n \bm\xi(i)^2}=1+\OO(\log n/\sqrt n)
\end{align} 
with probability $1-e^{c(\log n)^2}$.
We can check that $|a_1|=|\beta b_{10}|$,  $|\beta ^{1/{(k-2)}}a_1|=|\beta \langle \bmu, \bmv\rangle^{k-2}|^{(k-1)/(k-2)}\gtrsim n^{(k-1)\varepsilon/(k-2)}\geq n^\varepsilon|c_1|$, and $|\sqrt n a_1|=|\beta\langle \bmu, \bmv\rangle^{k-2}| |\sqrt n \langle \bmu, \bmv\rangle |\gtrsim n^\varepsilon\geq n^\varepsilon|c_1|$. Moreover, conditioning on  $\bmZ[\bmv^{\otimes(k-1)}]=\bm\xi$, Lemma \ref{l:decompose} implies that $\bmxi_1=\bmZ[\bmtau_{0}]$  is an independent Gaussian random vector with each entry $\cN(0,1/n)$. By the standard concentration inequality, it holds that  with probability $1-e^{c(\log n)^2}$, $\|\bm\xi_1\|_2=1+\OO(\log n/\sqrt n)$, $|\langle \bma, \bm\xi_1\rangle|$ and the projection of $\bm\xi_1$ on the span of $\{\bmv, \bmxi\}$ is bounded by $\log n/\sqrt n$. So far we have proved that \eqref{e:ind} and \eqref{e:xibb} for $t=1$.

In the following, we assume that \eqref{e:ind} holds for $t$, and prove it for $t+1$. We recall from \eqref{e:btforms} and \eqref{e:at+1s} that
\begin{align}\label{e:tt1}
a_{t+1}=\beta(a_t+b_t\langle \bmw_t, \bmv\rangle+c_t\langle \bmxi_t, \bmv\rangle )^{k-1}, \quad b_t \bmw_t=b_{t0}\bmxi+b_{t1}\bmxi_1+\cdots+b_{tt-1}\bmxi_{t-1}
\end{align}
By our induction hypothesis, we have that
\begin{align}\label{e:tt2}
|b_t\langle \bmw_t, \bmv\rangle|\lesssim |b_{t0}\langle \bmxi, \bmv\rangle|+|b_{t1}\langle \bmxi_1, \bmv\rangle|+\cdots+|b_{t(t-1)}\langle \bmxi_{t-1}, \bmv\rangle|
\lesssim (\log n /\sqrt{n})|a_t|/|\beta|,
\end{align}
and
\begin{align}\label{e:tt3}
|c_t\langle \bmxi_t, \bmv\rangle|\lesssim (\log n/ \sqrt n)|c_t|\lesssim (\log n)|a_t|/n^\varepsilon.
\end{align}
It follows from plugging \eqref{e:tt2} and \eqref{e:tt3} into \eqref{e:tt1}, we get
\begin{align}\label{e:abb}
a_{t+1}=\beta(a_t+\OO(\log n|a_t|/n^\varepsilon))^{k-1}= (1+\OO(\log n/n^\varepsilon)) \beta a_t^{k-1}.
\end{align}

We recall from \eqref{e:bbcoeff}, the coefficients $b_{(t+1) 0}, b_{(t+1)1}, \cdots, b_{(t+1)t}$ are determined from the projection of $\bmy_t^{\otimes (k-1)}$ on $\bmv^{\otimes (k-1)}, \bmtau_0, \bmtau_1, \cdots, \bmtau_{t-1}$
\begin{align}
\bmy_t^{\otimes (k-1)}
=b_{(t+1) 0}\bmv^{\otimes (k-1)}+b_{(t+1)1}\bm\tau_0+b_{(t+1)2}\bm\tau_1+\cdots b_{(t+1)t}\bm\tau_{t-1}+c_{t+1} \bm\tau_{t}.
\end{align}
We also recall that $\bmv^{\otimes (k-1)}, \bmtau_0, \bmtau_1, \cdots, \bmtau_{t-1}$ are obtained from $\bmv^{\otimes (k-1)}, \bmy_0^{\otimes (k-1)}, \bmy_1^{\otimes (k-1)}, \cdots, \bmy_{t-1}^{\otimes (k-1)}$
by the Gram-Schmidt orthonormalization procedure. So we have that the span of  vectors (viewed as vectors)  $\bmv^{\otimes (k-1)}, \bmtau_0, \bmtau_1, \cdots, \bmtau_{t-1}$ is the same as the span of tensors $\bmv^{\otimes (k-1)}, \bmy_0^{\otimes (k-1)}, \bmy_1^{\otimes (k-1)}, \cdots, \bmy_{t-1}^{\otimes (k-1)}$, which is contained in the span of $\{\bmv, \bmw_t,\bmy_0, \cdots, \bmy_{t-1}\}^{\otimes (k-1)}$. Moreover from the relation \eqref{e:btforms}, one can see that the span of $\{\bmv, \bmw_t,\bmy_0, \cdots, \bmy_{t-1}\}$ is the same as the span of
$\{\bmv, \bmxi,\bmxi_1, \cdots, \bmxi_{t-1}\}$.
It follows that
\begin{align}\begin{split}\label{e:sumb}
&\phantom{{}={}}\sqrt{b_{(t+1) 0}^2+b_{(t+1)1}^2+b_{(t+1)2}^2+\cdots +b_{(t+1)t}^2}\\
&=\|{\rm{Proj}}_{{\rm Span}\{\bmv^{\otimes (k-1)}, \bmtau_0, \bmtau_1, \cdots, \bmtau_{t-1}\}}(a_t \bmv+b_t \bmw_t+c_t\bmxi_t)^{\otimes (k-1)}\|_2 \\
&\leq\|{\rm{Proj}}_{{\rm Span}\{\bmv, \bmw_t,\bmy_0, \cdots, \bmy_{t-1}\}^{\otimes (k-1)}}(a_t \bmv+b_t \bmw_t+c_t\bmxi_t)^{\otimes (k-1)}\|_2\\
&\leq\|{\rm{Proj}}_{{\rm Span}\{\bmv, \bmw_t,\bmy_0, \cdots, \bmy_{t-1}\}}(a_t \bmv+b_t \bmw_t+c_t\bmxi_t)\|_2^{k-1}\\
&=\|a_t \bmv+b_t \bmw_t+c_t{\rm{Proj}}_{{\rm Span}\{\bmv, \bmxi,\bmxi_1, \cdots, \bmxi_{t-1}\}}(\bmxi_t)\|_2^{k-1}\\
&\lesssim \left(|a_t|+|b_t|+\frac{\log n|c_t|}{\sqrt n}\right)^{k-1}\lesssim |a_t|^{k-1}\lesssim |a_{t+1}|/|\beta|,
\end{split}\end{align}
where in the last line we used our induction hypothesis that $\|{\rm{Proj}}_{{\rm Span}\{\bmv, \bmxi,\bmxi_1, \cdots, \bmxi_{t-1}\}}(\bmxi_t)\|_2\lesssim \log n/\sqrt n$.

Finally we estimate $c_{t+1}$. We recall from \eqref{e:bbcoeff}, the coefficient $c_{t+1}$ is the remainder of $\bmy_t^{\otimes (k-1)}$ after projecting on $\bmv^{\otimes (k-1)}, \bmtau_0, \bmtau_1, \cdots, \bmtau_{t-1}$. It is bounded by the remainder of $\bmy_t^{\otimes (k-1)}$ after projecting on $\bmv^{\otimes (k-1)}$,
\begin{align}
|c_{t+1}|\leq \|\bmy_t^{\otimes (k-1)}-a_t^{k-1}\bmv^{\otimes (k-1)}\|_2
=\|(a_t \bmv+b_t \bmw_t+c_t\bmxi_t)^{\otimes (k-1)}-a_t^{k-1}\bmv^{\otimes (k-1)}\|_2
.
\end{align}
The difference $(a_t \bmv+b_t \bmw_t+c_t\bmxi_t)^{\otimes (k-1)}-a_t^{k-1}\bmv^{\otimes (k-1)}$ is a sum of terms in the following form,
\begin{align}\label{e:owe}
\bm\eta_1\otimes \bm\eta_2\otimes \cdots\otimes \bm\eta_{k-1},
\end{align}
where vectors $\bm\eta_1, \bm\eta_2, \cdots, \bm\eta_{k-1}\in \{a_t\bmv, b_t\bmw_t+c_t\bm\xi_t\}$, and at least one of them is $b_t\bmw_t+c_t\bm\xi_t$. We notice that by our induction hypothesis, $\|b_t\bmw_t+c_t\bm\xi_t\|_2\lesssim |b_t|\|\bmw_t\|_2+|c_t|\|\bm\xi_t\|_2\lesssim |b_t|+|c_t|$. For the $L_2$ norm of \eqref{e:owe}, each copy of  $a_t\bmv$ contributes $a_t$ and each copy of $b_t\bmw_t+c_t\bm\xi_t$ contributes a factor $|b_t|+|c_t|$. We conclude that
\begin{align}\label{e:ctbound}
|c_{t+1}|\leq
\|(a_t \bmv+b_t \bmw_t+c_t\bmxi_t)^{\otimes (k-1)}-a_t^{k-1}\bmv^{\otimes (k-1)}\|_2
\lesssim \sum_{r=1}^{k-1}|a_t|^{k-1-r}(|b_t|+|c_t|)^r.
\end{align}
Combining the above estimate with \eqref{e:abb} that $|a_{t+1}|\asymp |\beta||a_t|^{k-1}$, we divide both sides of \eqref{e:ctbound} by $|\beta||a_t|^{k-1}$,
\begin{align}\label{e:xtbb}
\frac{|c_{t+1}|}{|a_{t+1}|}
&\lesssim \frac{1}{|\beta|}\sum_{r=1}^{k-1}\left(\frac{|b_t|}{|a_t|}+\frac{|c_t|}{|a_t|}\right)^r
\lesssim
\frac{1}{|\beta|}\sum_{r=1}^{k-1}\left(\frac{1}{|\beta|}+\frac{|c_t|}{|a_t|}\right)^r,
\end{align}
where we used our induction hypothesis that $|a_t|\gtrsim |\beta||b_t|$.
There are three cases:
\begin{enumerate}
\item If $|c_t|/|a_t|\geq 1$, then
\begin{align}
\frac{|c_{t+1}|}{|a_{t+1}|}
\lesssim
\frac{1}{|\beta|}\sum_{r=1}^{k-1}\left(\frac{1}{|\beta|}+\frac{|c_t|}{|a_t|}\right)^r
\lesssim\frac{1}{|\beta|}\left(\frac{|c_t|}{|a_t|}\right)^{k-1}.
\end{align}
If $k=2$, then our assumption $|\beta\langle \bmu, \bmv\rangle^{k-2}|=|\beta|\geq n^{\varepsilon}$, implies that  $|c_{t+1}|/|a_{t+1}|\lesssim (|c_t|/|a_t|)/n^{\varepsilon}$. If $k\geq 2$, by our induction hypothesis $|c_t|/|a_t|\lesssim \beta^{1/(k-2)}/n^\varepsilon$. This implies $(|c_t|/|a_t|)^{k-2}/|\beta|\lesssim 1/n^{\varepsilon}$, and we still get that $|c_{t+1}|/|a_{t+1}|\lesssim (|c_t|/|a_t|)/n^{\varepsilon}$.
\item If $1/|\beta|\lesssim |c_t|/|a_t|\leq 1$, then
\begin{align}
\frac{|c_{t+1}|}{|a_{t+1}|}
\lesssim
\frac{1}{|\beta|}\sum_{r=1}^{k-1}\left(\frac{1}{|\beta|}+\frac{|c_t|}{|a_t|}\right)^r
\lesssim\frac{1}{|\beta|}\left(\frac{|c_t|}{|a_t|}\right)\lesssim \frac{1}{n^\varepsilon}\left(\frac{|c_t|}{|a_t|}\right),
\end{align}
where we used that $|\beta|\geq |\beta\langle \bmu, \bmv\rangle^{k-2}|\geq n^{\varepsilon}$.
\item Finally for $ |c_t|/|a_t|\lesssim 1/|\beta|$, we will have
\begin{align}
\frac{|c_{t+1}|}{|a_{t+1}|}
\lesssim
\frac{1}{|\beta|}\sum_{r=1}^{k-1}\left(\frac{1}{|\beta|}+\frac{|c_t|}{|a_t|}\right)^r
\lesssim\frac{1}{|\beta|}\left(\frac{1}{|\beta|}\right)\lesssim \frac{1}{|\beta|^2}.
\end{align}
\end{enumerate}
In all these cases if $|c_{t}|/|a_{t}|\lesssim\min\{\sqrt n, \bm1(k\geq 3)|\beta|^{1/(k-2)}\}/n^{\varepsilon}$, we have $|c_{t+1}|/|a_{t+1}|\lesssim\min\{\sqrt n, \bm1(k\geq 3)|\beta|^{1/(k-2)}\}/n^{\varepsilon}$.
This finishes the proof of the induction \eqref{e:ind}.

For \eqref{e:xibb}, since $\bmtau_t$ is orthogonal to $\bmv^{\otimes (k-1)}, \bmtau_0, \bmtau_1, \cdots, \bmtau_{t-1}$, Lemma \ref{l:decompose} implies that conditioning on $\bmxi=\bmZ[\bmv^{\otimes(k-1)}]$ and $\bmxi_{s+1}=\bmZ[\bmtau_s]$ for $0\leq s\leq t-1$, $\bmxi_{t+1}=\bmZ[\bmtau_t]$ is an independent Gaussian vector, with each entry $\cN(0,1/n)$. By the standard concentration inequality, it holds that  with probability $1-e^{c(\log n)^2}$, $\|\bm\xi_{t+1}\|_2=1+\OO(\log n/\sqrt n)$, $|\langle \bma, \bm\xi_{t+1}\rangle|$ and the projection of $\bm\xi_{t+1}$ on the span of $\{\bmv, \bmxi, \bmxi_1,\cdots, \bmxi_t\}$ is bounded by $\log n/\sqrt n$. This finishes the proof of the induction \eqref{e:xibb}.
\end{proof}

Next, using \eqref{e:ind} and \eqref{e:xibb} in Claim \ref{c:coebound} as input, we prove that for
\begin{align}\label{e:tbbd}
t\geq 1+\frac{1}{\varepsilon}\left(\frac{1}{2}+\frac{2\log|\beta|}{\log n}\right),
\end{align}
with probability $1-e^{c(\log n)^2}$ we have
\begin{align}\label{e:btform}
\bmy_t=a_t \bmv+b_{t0}\bmxi+b_{t1}\bmxi_1+\cdots+b_{tt-1}\bmxi_{t-1}+c_t\bmxi_t,
\end{align}
such that
\begin{align}\label{e:newbound}
b_{t0}=\frac{a_t}{\beta}+\OO\left(\frac{\log n |a_t|}{|\beta|^2\sqrt n}\right)
\quad |b_{t1}|, |b_{t2}|,\cdots, |b_{t(t-1)}|\lesssim \frac{(\log n)^{1/2}|a_{t}|}{|\beta|^{3/2}n^{1/4}},\quad |c_t|\lesssim |a_t|/\beta^2.
\end{align}

Let $x_t=|c_t/a_t|\ll |\beta|^{1/(k-2)}$, then \eqref{e:xtbb} implies
\begin{align}
x_{t+1}\lesssim \frac{1}{|\beta|}\sum_{r=1}^{k-1}\left(\frac{1}{|\beta|}+x_t\right)^r,
\end{align}
from the discussion after \eqref{e:xtbb}, we have that either
$x_{t+1}\lesssim 1/\beta^2$, or $x_{t+1}\lesssim x_t/n^\varepsilon$.
Since $x_1=|c_1/a_1|\lesssim n^{1/2-\varepsilon}$, we conclude that it holds
\begin{align}\label{e:cbbs}
x_t=|c_t/a_t|\lesssim 1/\beta^2,\quad \text{when }t\geq \frac{1}{\varepsilon}\left(\frac{1}{2}+\frac{2\log|\beta|}{\log n}\right).
\end{align}

To derive the upper bound of $b_{t1}, b_{t2},\cdots, b_{t(t-1)}$, we use \eqref{e:sumb}.
\begin{align}\begin{split}\label{e:sumb2}
&\phantom{{}={}}b_{(t+1) 0}^2+b_{(t+1)1}^2+b_{(t+1)2}^2+\cdots +b_{(t+1)t}^2\\
&\leq
\|a_t \bmv+b_t \bmw_t+c_t{\rm{Proj}}_{{\rm Span}\{\bmv, \bmxi,\bmxi_1, \cdots, \bmxi_{t-1}\}}(\bmxi_t)\|_2^{2(k-1)}\\
&=\left(a_t^2+\OO\left(|a_t|\left(|b_t|+|c_t|\right)\frac{\log n}{\sqrt n}+\left(|b_t|+|c_t|\frac{\log n}{\sqrt n}\right)^2\right)\right)^{k-1},
\end{split}\end{align}
where we used our induction  \eqref{e:xibb} that $|\langle \bmxi, \bmv\rangle|, |\langle \bmxi_1, \bmv\rangle|,\cdots, |\langle \bmxi_t, \bmv\rangle|\lesssim \log n/\sqrt n$ and
the projection $\|{\rm{Proj}}_{{\rm Span}}\{\bmv, \bmxi,\bmxi_1, \cdots, \bmxi_{t-1}\}(\bmxi_t)\|_2\lesssim \log n/\sqrt n$.
Moreover, the first term $b_{(t+1) 0}$ is the projection of $\bmy_t^{\otimes (k-1)}$ on $\bmv^{\otimes (k-1)}$,
\begin{align}\label{e:sumb3}
b_{(t+1) 0}=\langle a_t \bmv+b_t \bmw_t+c_t\bmxi_{t},\bmv\rangle^{k-1}
=\left(a_t+ \OO\left(\frac{\log n(|b_t|+|c_t|)}{\sqrt n}\right)\right)^{k-1},
\end{align}
where we used \eqref{e:xibb} that $|\langle \bmxi, \bmv\rangle|, |\langle \bmxi_1, \bmv\rangle|,\cdots, |\langle \bmxi_t, \bmv\rangle|\lesssim \log n/\sqrt n$.
Now we can take difference of \eqref{e:sumb2} and \eqref{e:sumb3}, and use that $|b_t|\lesssim |a_t|/|\beta|$ from \eqref{e:ind} and $|c_t|\lesssim |a_t|/|\beta|$ from \eqref{e:cbbs},
\begin{align}\label{e:btta}
b_{(t+1)0}=a_t^{k-1}+\OO\left(|a_t|^{k-1}\frac{\log n}{|\beta|\sqrt n}\right), \quad b_{(t+1)1}^2+b_{(t+1)2}^2+\cdots +b_{(t+1)t}^2
\lesssim a_t^{2(k-1)}\frac{\log n}{|\beta|\sqrt n}.
\end{align}
From \eqref{e:tt1} and \eqref{e:abb}, we have that
\begin{align}\label{e:at+1}
a_{t+1}=\beta b_{(t+1) 0}\asymp \beta a_t^{k-1}.
\end{align}
Using the above relation, we can simplify \eqref{e:btta} as
\begin{align}
b_{(t+1)0}=\frac{a_{t+1}}{\beta}+\OO\left(\frac{\log n|a_{t+1}|}{|\beta|^2\sqrt n}\right),
\quad |b_{(t+1)1}|, |b_{(t+1)2}|, \cdots |b_{(t+1)t}|
\lesssim \frac{(\log n)^{1/2}|a_{t+1}|}{|\beta|^{3/2}n^{1/4}}.
\end{align}
This finishes the proof of \eqref{e:newbound}.

With the expression \eqref{e:newbound}, we can process to prove our main results \eqref{e:clt} and \eqref{e:betaclt}.
Thanks to \eqref{e:xibb}, for $t$ satisfies \eqref{e:tbbd}, we have that with probability at least $1-\OO(e^{-c(\log N)^2})$
\begin{align}\label{e:ytnorm}
\|\bmy_t\|_2^2=a_t^2\left(1+\frac{1}{\beta^2}+\frac{2\langle \bmv, \bmxi\rangle}{\beta}+\OO\left(\frac{\log n}{\beta^2\sqrt n}+\frac{(\log n)^{3/2}}{|\beta|^{3/2}n^{3/4}}+\frac{1}{\beta^4}\right)\right).
\end{align}
By rearranging it we get
\begin{align}\label{e:ytnorminv}
1/\|\bmy_t\|_2
=\frac{1}{|a_t|}\left(1-\frac{1}{2\beta^2}-\frac{\langle \bmv, \bmxi\rangle}{\beta}+\OO\left(\frac{\log n}{\beta^2\sqrt n}+\frac{(\log n)^{3/2}}{|\beta|^{3/2}n^{3/4}}+\frac{1}{\beta^4}\right)\right).
\end{align}
We can take the inner product $\langle \bma, \bmy_t\rangle$ using \eqref{e:btform} and \eqref{e:newbound}, and multiply \eqref{e:ytnorminv}
\begin{align}\begin{split}
\langle \bma, \bmu_t\rangle
=\frac{\langle \bma, \bmy_t\rangle}{\|\bmy_t\|_2}
&=\sgn(a_t)\left(\left(1-\frac{1}{2\beta^2}\right)\langle\bma, \bmv\rangle+\frac{\langle \bma, \bmxi\rangle-\langle \bma, \bmv\rangle\langle \bmv, \bmxi\rangle}{\beta}\right)\\
&+\OO_\bP\left(\frac{\log n}{\beta^2\sqrt n}+\frac{(\log n)^{3/2}}{|\beta|^{3/2}n^{3/4}}+\frac{|\langle \bma, \bmv\rangle|}{\beta^4}\right),
\end{split}\end{align}
where we used \eqref{e:xibb} that with high probability $|\langle \bma, \bm\xi\rangle|$, $|\langle \bma, \bm\xi_s\rangle|$ for $1\leq s\leq t$ are bounded by $\log n/\sqrt n$. This finishes the proof of \eqref{e:clt}.
For $\widehat\beta$ in \eqref{e:betaclt}, we have
\begin{align}\label{e:newbeta}
\bmX[\bmu_t^{\otimes k}]
=\frac{\bmX[\bmy_t^{\otimes k}]}{\|\bmy_t\|_2^k}
=\frac{\langle \bmy_t, \bmX[\bmy_t^{\otimes(k-1)}]\rangle}{\|\bmy_t\|_2^k}
=\frac{\langle \bmy_t, \bmy_{t+1}\rangle}{\|\bmy_t\|_2^k}
.
\end{align}

Thanks to \eqref{e:cbbs}, \eqref{e:at+1s} and \eqref{e:xibb}, for $t$ satisfies \eqref{e:tbbd},  with probability at least $1-\OO(e^{-c(\log N)^2})$,  we have
\begin{align}
\bmy_{t+1}=a_{t+1} \bmv+b_{(t+1)0}\bmxi+b_{(t+1)1}\bmxi_1+\cdots+b_{(t+1)t}\bmxi_{t}+c_{t+1}\bmxi_{t+1},
\end{align}
where $|c_{t+1}|\lesssim |a_t|^{k-1}/\beta^2$,
\begin{align}\begin{split}\label{e:atrecur}
a_{t+1}
&=\beta (a_t+b_{t0}\langle \bm\xi, \bmv\rangle+b_{t1}\langle \bm\xi_1, \bmv\rangle+\cdots+
b_{t(t-1)}\langle \bm\xi_{t-1}, \bmv\rangle+c_t\langle \bm\xi_t,\bmv\rangle)^{k-1}\\
&=\beta a_t^{k-1}\left(1+\frac{\langle\bmxi, \bmv \rangle}{\beta}+\OO\left(\frac{\log n}{\beta^2\sqrt n}+\frac{(\log n)^{3/2}}{|\beta|^{3/2}n^{3/4}}\right)\right)^{k-1},\quad
\end{split}\end{align}
and
\begin{align}
b_{(t+1)0}=a_t^{k-1}\left(1+\OO\left(\frac{\log n}{|\beta|\sqrt n}\right)\right), \quad |b_{(t+1)1}|, |b_{(t+1)2}|+\cdots +|b_{(t+1)t}|
\lesssim a_t^{k-1}\frac{(\log n)^{1/2}}{|\beta|^{1/2}n^{1/4}}.
\end{align}
From the discussion above, combining with \eqref{e:btform} and \eqref{e:newbound} with straightforward computation, we have
\begin{align}\label{e:cross}
\langle \bmy_t, \bmy_{t+1}\rangle=
\beta a_{t}^k\left(1+\frac{1}{\beta^2}+\frac{(k+1)\langle \bm\xi, \bmv\rangle}{\beta}+\OO\left(\frac{\log n}{\beta^2 \sqrt n}+\frac{(\log n)^{3/2}}{|\beta|^{3/2}n^{3/4}}\right)\right).
\end{align}
By plugging \eqref{e:ytnorminv} and \eqref{e:cross} into \eqref{e:newbeta}, we get
\begin{align}
\bmX[\bmu_t^{\otimes k}]=\sgn(a_t)^k\left(\beta+\langle \bm\xi, \bmv\rangle -\frac{k/2-1}{\beta}\right)+\OO\left(\frac{\log n}{|\beta| \sqrt n}+\frac{(\log n)^{3/2}}{|\beta|^{1/2}n^{3/4}}+\frac{1}{|\beta|^3}\right)
\end{align}
Since by our assumption, in Case 1 we have that $\beta>0$. Thanks to \eqref{e:atrecur} $a_{t+1}=\beta a_t^{k-1}(1+\oo(1))$, especially $a_{t+1}$ and $a_t$ are of the same sign. In the case $\langle \bmu, \bmv\rangle>0$, we have $a_1=\beta\langle \bmu, \bmv\rangle^{k-1}>0$. We conclude that $a_t>0$. Therefore $\sgn(\bmX[\bmu_t^{\otimes k}])=\sgn(a_t)^k=+$, and it follows that
\begin{align}
\bmX[\bmu_t^{\otimes k}]
=\beta+\langle \bm\xi, \bmv\rangle -\frac{k/2-1}{\beta}+\OO\left(\frac{\log n}{|\beta| \sqrt n}+\frac{(\log n)^{3/2}}{|\beta|^{1/2}n^{3/4}}+\frac{1}{|\beta|^3}\right)
\end{align}
This finishes the proof of \eqref{e:betaclt}.
The Cases \ref{c:case2}, \ref{c:case3}, \ref{c:case4} follow by simply changing $(\beta, \bmv)$
in the righthand side of \eqref{e:clt} and \eqref{e:betaclt} to the corresponding limit.
\end{proof}

\begin{proof}[Proof of Theorem \ref{t:diverge}]

We use the same notations as in the proof of Theorem \ref{t:main}.
If $|\beta|\geq n^{\varepsilon}$ and $|\beta\langle \bmu, \bmv\rangle^{k-2}|\leq n^{-\varepsilon}$, then we first prove by induction that
 for any fixed time $t$, with probability at least $1-\OO(e^{-c(\log N)^2})$ the following holds: for any $s\leq t$,
\begin{align}\begin{split}\label{e:ind2}
&|b_{s0}|, |b_{s1}|,\cdots, |b_{s(s-1)}|\lesssim \max\{|c_s|/|\beta|^{(k-1)/(k-2)}, (\log n)^{k-1}|c_s|/n^{(k-1)/2}\},\\
&|c_s|\geq n^\varepsilon\beta^{1/(k-2)}|a_s|,
\end{split}\end{align}
and
\begin{align}\begin{split}\label{e:xibb2}
&\|\bm\xi\| , \|\bm\xi_s\|_2 = 1+\OO(\log n/\sqrt n), \\
&|\langle \bmv, \bm\xi\rangle|, \|{\rm{Proj}}_{{\rm Span}\{\bmv, \bmxi,\bm\xi_1,\cdots, \cdots, \bmxi_{s-1}\}}(\bmxi_s)\|_2 \lesssim \log n/\sqrt n.
\end{split}\end{align}

From \eqref{e:y1c},  $a_1=\beta\langle\bmu,\bmv\rangle^{k-1}$, $b_{10}=\langle \bmu, \bmv\rangle^{k-1}$ and $c_1=
\sqrt{1-\langle \bmu, \bmv\rangle^{2(k-1)}}$.
Since $|\beta|\geq n^\varepsilon$ and $|\beta\langle \bmu, \bmv\rangle^{k-2}|\leq n^{-\varepsilon}$, we have that $|\langle \bmu, \bmv\rangle|\leq n^{-2\varepsilon/(k-2)}\ll1$ and therefore $|c_1|\asymp 1$.
We can check that $|\beta ^{1/{(k-2)}}a_1|=|\beta\langle \bmu, \bmv\rangle^{k-2}|^{(k-1)/(k-2)}\leq n^{-\varepsilon}\lesssim n^{-\varepsilon}|c_1|$ and $|b_{10}|=|a_1/\beta|\lesssim  n^{-\varepsilon}|c_1/\beta^{(k-1)/(k-2)}|$. Moreover, conditioning on  $\bmZ[\bmv^{\otimes(k-1)}]=\bm\xi$, Lemma \ref{l:decompose} implies that $\bmxi_1=\bmZ[\bmtau_{0}]$  is an independent Gaussian random vector with each entry $\cN(0,1/n)$. By the standard concentration inequality, it holds that  with probability $1-e^{c(\log n)^2}$, $\|\bm\xi_1\|_2=1+\OO(\log n/\sqrt n)$, and the projection of $\bm\xi_1$ on the span of $\{\bmv, \bmxi\}$ is bounded by $\log n/\sqrt n$. So far we have proved \eqref{e:ind2} and \eqref{e:xibb2} for $t=1$.

In the following, assuming the statements \eqref{e:ind2} and \eqref{e:xibb2} hold for $t$, we prove them for $t+1$.
From \eqref{e:tt1}, using \eqref{e:tt1} and \eqref{e:tt2}, we have
\begin{align}\begin{split}\label{e:abound}
&\phantom{{}={}}|a_{t+1}|
=\left|\beta(a_t+b_t\langle \bmw_t, \bmv\rangle+c_t\langle \bmxi_t, \bmv\rangle )^{k-1}\right|\\
&\lesssim |\beta|\left(|a_t|+\frac{\log n (|b_{t0}|+|b_{t1}|+\cdots +|b_{t(t-1)}|)}{\sqrt n}+\frac{\log n|c_t|}{\sqrt n}\right)^{k-1}\\
&\lesssim|\beta|\left(|a_t|+\frac{\log n|c_t|}{\sqrt n}\right)^{k-1}
\lesssim |\beta|\left(\frac{|c_t|}{n^\varepsilon |\beta|^{1/(k-2)}}+\frac{\log n|c_t|}{\sqrt n}\right)^{k-1}\\
&\lesssim |\beta||c_t|^{k-1}\left(\frac{1}{n^{\varepsilon}|\beta|^{1/(k-2)}}+\frac{\log n}{\sqrt n}\right)^{k-1}
\lesssim \frac{|c_t|^{k-1}}{n^{\varepsilon}|\beta|^{1/(k-2)}},
\end{split}\end{align}
where in the third line we used our induction hypothesis that $|b_{t0}|+|b_{t1}|+\cdots +|b_{t(t-1)}|\lesssim |c_t|$, and $n^{-\varepsilon}\geq |\beta||\langle\bmu,\bmv\rangle|^{k-2}\gtrsim |\beta|/n^{(k-2)/2}$.

For $b_{(t+1) 0},b_{(t+1)1},\cdots , b_{(t+1)t}$, from \eqref{e:sumb} we have
\begin{align}\begin{split}\label{e:bbound}
&\phantom{{}={}}\sqrt{b_{(t+1) 0}^2+b_{(t+1)1}^2+b_{(t+1)2}^2+\cdots +b_{(t+1)t}^2}\lesssim
\left(|a_t|+|b_t|+\frac{\log n |c_t|}{\sqrt n}\right)^{k-1}\\
&\lesssim
\left(|a_t|+\frac{\log n |c_t|}{\sqrt n}\right)^{k-1}
\lesssim \left(\frac{|c_t|}{n^\varepsilon |\beta|^{1/(k-2)}}+\frac{\log n|c_t|}{\sqrt n}\right)^{k-1}\\
&\lesssim |c_t|^{k-1}\left(\frac{1}{n^{\varepsilon}|\beta|^{1/(k-2)}}+\frac{\log n}{\sqrt n}\right)^{k-1}.
\end{split}\end{align}

Finally we estimate $c_{t+1}$. We recall from \eqref{e:bbcoeff}, the coefficient $c_{t+1}$ is the remainder of $\bmy_t^{\otimes (k-1)}$ after projecting on $\bmv^{\otimes (k-1)}, \bmtau_0, \bmtau_1, \cdots, \bmtau_{t-1}$.
We have the following lower bound for $c_{t+1}$
\begin{align}\begin{split}\label{e:cbound}
|c_{t+1}|^2
&=\|(a_t \bmv+b_t \bmw_t+c_t\bmxi_t)^{\otimes (k-1)}\|^2_2-(b_{(t+1) 0}^2+b_{(t+1)1}^2+b_{(t+1)2}^2+\cdots +b_{(t+1)t}^2)\\
&\geq \|a_t \bmv+b_t \bmw_t+c_t\bmxi_t\|^{2(k-1)}_2-\OO\left(|c_t|^{2(k-1)}\left(\frac{1}{n^{\varepsilon}|\beta|^{1/(k-2)}}+\frac{\log n}{\sqrt n}\right)^{2(k-1)}\right).
\end{split}\end{align}
For the first term on the righthand side of \eqref{e:cbound}, using our induction hypothesis
\eqref{e:ind2} and \eqref{e:xibb2} that $|a_t|\lesssim |c_t|$, we have
\begin{align}\begin{split}\label{e:firstt}
\|a_t \bmv+b_t \bmw_t+c_t\bmxi_t\|^{2}_2
&=a_t^2 +b_t^2+c_t^2\|\bmxi_t\|^{2}_2+
2a_tb_t\langle \bmv, \bmw_t\rangle+2 a_t c_t\langle \bmv, \bm\xi_t\rangle
+2b_tc_t\langle \bmw_t, \bm\xi_t\rangle\\
&=\left(1+\OO\left(\frac{\log n}{\sqrt n}+\frac{1}{n^{2\varepsilon}\beta^{2/(k-2)}}\right)\right)c_t^2.
\end{split}\end{align}
We get the following lower for $c_{t+1}$ by plugging \eqref{e:firstt} into \eqref{e:cbound}, and rearranging
\begin{align}\label{e:cbbdhaha}
|c_{t+1}|\geq \left(1+\OO\left(\frac{\log n}{\sqrt n}+\frac{1}{n^{2\varepsilon}\beta^{2/(k-2)}}\right)\right)|c_t|^{k-1}
\end{align}
The claim that $|b_{(t+1)0}|, |b_{(t+1)1}|,\cdots, |b_{(t+1)t}|\lesssim \max\{|c_{t+1}|/|\beta|^{(k-1)/(k-2)}, (\log n)^{k-1}|c_{t+1}|/n^{(k-1)/2}\}$ follows from combining \eqref{e:bbound} and \eqref{e:cbbdhaha}.
The claim that $|c_{t+1}|\geq n^\varepsilon\beta^{1/(k-2)}|a_{t+1}|$ follows from combining \eqref{e:abound} and \eqref{e:cbbdhaha}.

For \eqref{e:xibb2}, since $\bmtau_t$ is orthogonal to $\bmv^{\otimes (k-1)}, \bmtau_0, \bmtau_1, \cdots, \bmtau_{t-1}$, Lemma \ref{l:decompose} implies that conditioning on $\bmxi=\bmZ[\bmv^{\otimes(k-1)}]$ and $\bmxi_{s+1}=\bmZ[\bmtau_s]$ for $0\leq s\leq t-1$, $\bmxi_{t+1}=\bmZ[\bmtau_t]$ is an independent Gaussian vector, with each entry $\cN(0,1/n)$. By the standard concentration inequality, it holds that  with probability $1-e^{c(\log n)^2}$, $\|\bm\xi_{t+1}\|_2=1+\OO(\log n/\sqrt n)$,  and the projection of $\bm\xi_{t+1}$ on the span of $\{\bmv, \bmxi, \bmxi_1,\cdots, \bmxi_t\}$ is bounded by $\log n/\sqrt n$. This finishes the proof of the induction \eqref{e:xibb2}.

Next, using \eqref{e:ind} and \eqref{e:xibb} as input, we prove that for
\begin{align}\label{e:tbbd2}
t\geq 1+\frac{1}{\varepsilon}\left(\frac{1}{2}-\frac{\log|\beta|}{(k-2)\log n}\right),
\end{align}
we have
\begin{align}\label{e:btform2}
\bmy_t=a_t \bmv+b_{t0}\bmxi+b_{t1}\bmxi_1+\cdots+b_{t(t-1)}\bmxi_{t-1}+c_t\bmxi_t,
\end{align}
such that
\begin{align}\label{e:newbound2}
\quad |a_t|, |b_{t0}|, |b_{t1}|,\cdots, |b_{t(t-1)}|\lesssim |c_t| |\beta|\left(\frac{\log n}{\sqrt n}\right)^{k-1}.
\end{align}

Let $x_t=|a_t/c_t|$, then \eqref{e:ind2} implies that $x_t\leq 1/(n^{\varepsilon}|\beta|^{1/(k-2)})$. By taking the ratio of \eqref{e:abound} and \eqref{e:cbbdhaha}, we get
\begin{align}\label{e:xtit}
x_{t+1}\lesssim |\beta|\left(\frac{\log n}{\sqrt n}+x_t\right)^{k-1}.
\end{align}
there are two cases,
\begin{enumerate}
\item if $\log n/\sqrt n\lesssim x_t\leq 1/(n^{\varepsilon}|\beta|^{1/(k-2)})$, then
\begin{align}
x_{t+1}\lesssim |\beta| x_t^{k-1}
=x_t(|\beta|^{1/(k-2)} x_t)^{k-2}\leq x_t/n^\varepsilon;
\end{align}
\item If $x_t\lesssim \log n/\sqrt n$, then $|x_{t+1}|\lesssim |\beta|(\log n/\sqrt n)^{k-1}$.
\end{enumerate}
Since $x_1=|a_1/c_1|\lesssim 1/(n^{\varepsilon}|\beta|^{1/(k-2)})$, we conclude that
\begin{align}\label{e:cbb}
x_t=|a_t/c_t|\lesssim |\beta|(\log n/\sqrt n)^{k-1},\quad \text{when }t\geq \frac{1}{\varepsilon}\left(\frac{1}{2}-\frac{\log|\beta|}{(k-2)\log n}\right).
\end{align}
In this regime, \eqref{e:bbound} implies that
\begin{align}\begin{split}
&\phantom{{}={}}|b_{(t+1) 0}|, |b_{(t+1)1}|, |b_{(t+1)2}|, \cdots ,|b_{(t+1)t}|\lesssim |\beta||c_t|^{k-1}\left(\frac{|a_t|}{|c_t|}+\frac{\log n}{\sqrt n}\right)^{k-1}\\
&\lesssim |\beta||c_t|^{k-1}\left(\frac{\log n}{\sqrt n}\right)^{k-1}\lesssim |c_{t+1}||\beta|\left(\frac{\log n}{\sqrt n}\right)^{k-1},
\end{split}\end{align}
where we used \eqref{e:cbbdhaha} in the last inequality. This finishes the proof of \eqref{e:newbound2}.
Using \eqref{e:newbound2}, we can compute $\bmu_t$,
\begin{align}
\bmu_t=\frac{\bmy_t}{\|\bmy_t\|}=\frac{\bm\xi_t}{\|\bm\xi_t\|_2}+\OO_\bP\left(|\beta|\left(\frac{\log n}{\sqrt n}\right)^{k-1}\right),
\end{align}
where the error term is a vector of length bounded by $|\beta|(\log n/\sqrt n)^{k-1}$. This finishes the proof of Theorem \ref{t:main}.

\end{proof}

\subsection{Proof of Corollarys \ref{coro1} and  \ref{coro2}}

\begin{proof}[Proof of Corollary \ref{coro1}]
According to the definition of $\bmxi$ in \eqref{e:clt} of Theorem \ref{t:main}, i.e.
$\bmxi=\bmZ[\bmv^{\otimes(k-1)}]$, is an $n$-dim vector, with each entry i.i.d. $\cN(0,1/n)$ Gaussian random variable. 
We  see that
$$
\langle \bm\xi, \bmv\rangle\stackrel{d}{=} \cN\left(0,1/n\right).
 $$
  Especially with high probability we will have that $|\langle \bm\xi, \bmv\rangle|\lesssim \log n/\sqrt n$.
Then we conclude from \eqref{e:betaclt}, with high probability it holds
\begin{align}\label{e:betabb}
\widehat \beta=\beta+\OO\left(\frac{1}{\beta}+\frac{\log n}{\sqrt n}\right).
\end{align} 
With the bound \eqref{e:betabb}, we can replace $\langle \bma, \bmv\rangle/(2\beta^2)$ on the righthand side of \eqref{e:clt} by $\langle \bma, \bmv\rangle/(2\widehat\beta^2)$, which gives an error \begin{align}
\left|\frac{\langle \bma, \bmv\rangle}{2\beta^2}-\frac{\langle \bma, \bmv\rangle}{2\widehat\beta^2}\right|=\OO\left(|\langle \bma, \bmv\rangle|\left(\frac{1}{|\beta|^4}+\frac{\log n}{|\beta|^3\sqrt n}\right)\right).
\end{align}
Combining the above discussion together, 
we can rewrite \eqref{e:clt} as
\begin{align}\begin{split}\label{e:avvbb}
&\phantom{{}={}}\langle \bma, \widehat\bmv\rangle-\left(1-\frac{1}{2\widehat\beta^2}\right)\langle\bma, \bmv\rangle=\frac{\langle \bma, \bmxi\rangle-\langle \bma, \bmv\rangle\langle \bmv, \bmxi\rangle}{\beta}+\OO\left(\frac{\log n}{\beta^2\sqrt n}+\frac{(\log n)^{3/2}}{\beta^{3/2}n^{3/4}}+\frac{|\langle \bma, \bmv\rangle|}{\beta^4}\right)
\end{split}\end{align}
with high probability.

Again thanks to the definition of $\bmxi$ in \eqref{e:clt} of Theorem \ref{t:main}, i.e.
$\bmxi=\bmZ[\bmv^{\otimes(k-1)}]$, is an $n$-dim vector, with each entry i.i.d. $\cN(0,1/n)$ Gaussian random variable, we see that
\begin{align}
\langle \bma, \bmxi\rangle-\langle \bma, \bmv\rangle\langle \bmv, \bmxi\rangle
=\langle \bma -\langle \bma, \bmv\rangle \bmv, \bm\xi\rangle,
\end{align}
is a Gaussian random variable, with mean zero and variance
\begin{align}
\bE[\langle \bma -\langle \bma, \bmv\rangle \bmv, \bm\xi\rangle^2]
=\frac{1}{n}\|\bma -\langle \bma, \bmv\rangle \bmv\|_2^2=\frac{1}{n}\langle \bma, (\bmI_n-\bmv \bmv^\top)\bma\rangle=\frac{1+\oo(1)}{n}\langle \bma, (\bmI_n-\widehat\bmv \widehat\bmv^\top)\bma\rangle.
\end{align}
 This together with \eqref{e:betabb}, \eqref{e:avvbb} as well as our assumption \eqref{e:avbound} \begin{equation}
\frac{\sqrt{n}\widehat\beta}{\sqrt{\langle \bma, (\bmI_n-\widehat\bmv \widehat\bmv^\top)\bma\rangle}}\left[\big(1-\frac{1}{2\widehat\beta^2}\big)^{-1}\langle \bma, \widehat\bmv\rangle-\langle\bma, \bmv\rangle\right]
\xrightarrow{d} \cN(0,1).
\end{equation}

Under the same assumption, we have similar results for Cases \ref{c:case2}, \ref{c:case3}, \ref{c:case4}, by simply changing $(\beta, \bmv)$
in the righthand side of \eqref{e:clt} and \eqref{e:betaclt} to the corresponding expression.
\end{proof}

\begin{proof}[Proof of Corollary  \ref{coro2}]
Given the significance level $\alpha$, the asymptotic confidence intervals in Corollary \ref{coro2}  can be calculated from Corollary \ref{coro1} by bounding the absolute values of the left hand sides of \eqref{coro:clt1} at $z_{\alpha}$.
\end{proof}

\subsection{Proof of Theorem \ref{t:mainr} }

\begin{proof}[Proof of Theorem \ref{t:mainr}]
We define an auxiliary iteration, $\bmy_0=\bmu$ and
\begin{align}\label{e:recurr}
\bmy_{t+1}=\bmX[\bmy_t^{\otimes(k-1)}].
\end{align}
Then we have that $\bmu_t=\bmy_t/\|\bmy_t\|_2$.

For index $\bmj=(j_1, j_2, \cdots, j_{k-1})\in \qq{1, r}^{k-1}$. Let $\bmxi_\bmj=\bmZ[\bmv_{j_1}\otimes \bmv_{j_2}\otimes \cdots\otimes \bmv_{j_{k-1}}]$. Its entries
\begin{align}
\bmxi_\bmj(i)=\sum_{i_1,i_2,\cdots, i_{k-1}\in\qq{1, n}}\bmZ_{i i_1i_2\cdots i_{k-1}}\bmv_{j_1}(i_1)\bmv_{j_2}({i_2})\cdots \bmv_{j_{k-1}}(i_{k-1}),
\end{align}
are linear combination of Gaussian random variables, which is also Gaussian.
These entries are i.i.d. Gaussian variables with mean zero and variance $1/n$,
\begin{align}
\bE[\bm\xi_{\bmj}(i)^2]=\sum_{i_1,i_2,\cdots, i_{k-1}\in\qq{1, n}}\bE[\bmZ^2_{i i_1i_2\cdots i_{k-1}}]\bmv_{j_1}(i_1)^2\bmv_{j_2}({i_2})^2\cdots \bmv_{j_{k-1}}(i_{k-1})^2=\frac{1}{n}.
\end{align}

We can compute $\bmy_t$ iteratively:
\begin{align}\label{e:y1r}
\bmy_1=\bmX[\bmy_0^{\otimes(k-1)}]=\sum_{j=1}^r\beta_j\langle\bmy_0,\bmv_j\rangle^{k-1}\bmv_j+\bmZ[\bmy_0^{\otimes(k-1)}].
\end{align}
For the last term on the righthand side of \eqref{e:y1r}, we can decompose $\bmy_0^{\otimes(k-1)}$ as a projection on $\bmv_{j_1}\otimes\bmv_{j_2}\cdots\otimes \bmv_{j_{k-1}}$ for $\bmj\in \qq{1,r}^{k-1}$, and its orthogonal part:
\begin{align}
\bmy_0^{\otimes(k-1)}=\sum_{\bmj }\prod_{s=1}^{k-1}\langle \bmy_0, \bmv_{j_s}\rangle \bmv_{j_1}\otimes \bmv_{j_2}\otimes \cdots \otimes \bmv_{j_{k-1}}
+\sqrt{1-\left(\sum_{j=1}^r\langle \bmy_0, \bmv_j\rangle^2\right)^{(k-1)}}\bmtau_0,
\end{align}
where  the sum is over $\bmj\in \qq{1,r}^{k-1}$,  $\bmtau_0\in \otimes^{k}\bR^n$ and $\|\bmtau_0\|_2=1$.
Let $\bmxi_1=\bmZ[\bmtau_0]$. By our construction $\bmv_{j_1}\otimes \bmv_{j_2}\otimes \cdots \otimes \bmv_{j_{k-1}}$ for any $\bmj\in \qq{1,r}^{k-1}$ and $\bmtau_0$ are othorgonal to each other. Thanks to Lemma \ref{l:decompose}, conditioning on $\bmxi_\bmj:=\bmZ[\bmv_{j_1}\otimes \bmv_{j_2}\otimes \cdots\otimes \bmv_{j_{k-1}}]$ for index $\bmj=(j_1, j_2, \cdots, j_{k-1})\in \qq{1, r}^{k-1}$, $\bmxi_1=\bmZ[\bmtau_0]$ has the same law as $\tilde \bmZ[\bmtau_0]$, where $\tilde \bmZ$ is an independent copy of $\bmZ$. Since $\langle \bmtau_0, \bm\tau_0\rangle=1$, $\bm\xi_1$ is a Gaussian vector with each entry $\cN(0,1/n)$.  With those notations we can rewrite $\bmy_1$ as
\begin{align}\label{e:y12r}
\bmy_1=\sum_{j=1}^r\beta_j\langle\bmy_0,\bmv_j\rangle^{k-1}\bmv_j
+\sum_{\bmj }\prod_{s=1}^{k-1}\langle \bmy_0, \bmv_{j_s}\rangle \bmxi_\bmj
+\sqrt{1-\left(\sum_{j=1}^r\langle \bmy_0, \bmv_j\rangle^2\right)^{(k-1)}}\bmxi_1.
\end{align}

In the following we show that: 
\begin{claim}\label{c:bbexp}
We can compute $\bmy_2, \bmy_3, \cdots, \bmy_t$ inductively. The  Gram-Schmidt orthonormalization procedure gives  an orthogonal base of $\bmv_{j_1}\otimes \bmv_{j_2}\otimes \cdots \otimes \bmv_{j_{k-1}}$ for $\bmj\in \qq{1,r}^{k-1}$ and
$ \bmy_0^{\otimes (k-1)}, \bmy_1^{\otimes (k-1)}, \cdots, \bmy_{t-1}^{\otimes (k-1)}$ as:
\begin{align}\label{e:baser}
\{\bmv_{j_1}\otimes \bmv_{j_2}\otimes \cdots \otimes \bmv_{j_{k-1}}\}_{\bmj \in \qq{1,r}^{k-1}}, \bmtau_0, \bmtau_1, \cdots, \bmtau_{t-1}.
\end{align}
Let $\bmxi_{\bmj}=\bmZ[\bmv_{j_1}\otimes\bmv_{j_2}\otimes \cdots \otimes \bmv_{j_{k-1}}]$ for $\bmj=(j_1, j_2,\cdots, j_{k-1})\in \qq{1,r}^{k-1}$, and $\bmxi_{s+1}=\bmZ[\bmtau_s]$ for $0\leq s\leq t-1$. Conditioning on $\bmxi_{\bmj}=\bmZ[\bmv_{j_1}\otimes\bmv_{j_2}\otimes \cdots \otimes \bmv_{j_{k-1}}]$ for $\bmj=(j_1, j_2,\cdots, j_{k-1})\in \qq{1,r}^{k-1}$ and $\bmxi_{s+1}=\bmZ[\bmtau_s]$ for $0\leq s\leq t-2$, $\bmxi_{t}=\bmZ[\bmtau_{t-1}]$ is an independent Gaussian vector, with each entry $\cN(0,1/n)$.
 Then  $\bmy_t$ is in the following form
\begin{align}\label{e:btformr}
\bmy_t=a_t \bmv_t+b_t \bmw_t+c_t\bmxi_t,
\end{align}
where
\begin{align}\label{e:rrccer}
a_t \bmv_t=a_{t1}\bmv_1+a_{t2}\bmv_2+\cdots+a_{tr}\bmv_{r},
\quad b_t \bmw_t=\sum_{\bmj}b_{t\bmj}\bmxi_\bmj+b_{t1}\bmxi_1+\cdots+b_{tt-1}\bmxi_{t-1},
\end{align}
and $\|\bmv_1\|_2,\|\bmv_2\|_2,\cdots, \|\bmv_r\|_2,\|\bmw_t\|_2=1$.
\end{claim}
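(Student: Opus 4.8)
The plan is to prove Claim \ref{c:bbexp} by induction on $t$, following the blueprint of Claim \ref{c:expression} but now tracking the $r^{k-1}$ ``signal$\times$noise'' directions $\bmxi_\bmj$. The base case $t=1$ is already contained in \eqref{e:y12r}: one reads off $a_1\bmv_1=\sum_{j=1}^r\beta_j\langle\bmu,\bmv_j\rangle^{k-1}\bmv_j$ (a vector in $\mathrm{span}\{\bmv_1,\dots,\bmv_r\}$, which we normalize to define the unit vector $\bmv_1$), $b_{1\bmj}=\prod_{s=1}^{k-1}\langle\bmu,\bmv_{j_s}\rangle$, and $c_1=\sqrt{1-(\sum_j\langle\bmu,\bmv_j\rangle^2)^{k-1}}$; Lemma \ref{l:decompose} gives that $\bmxi_1=\bmZ[\bmtau_0]$ is an independent Gaussian vector with i.i.d. $\cN(0,1/n)$ entries, using $\|\bmtau_0\|_2=1$ and $\bmtau_0\perp\bmv_{j_1}\otimes\cdots\otimes\bmv_{j_{k-1}}$ for every $\bmj$.

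For the inductive step I would assume $\bmy_t=a_t\bmv_t+b_t\bmw_t+c_t\bmxi_t$ is in the form \eqref{e:btformr}--\eqref{e:rrccer} together with the orthonormal system \eqref{e:baser}, then extend Gram--Schmidt by orthonormalizing $\bmy_t^{\otimes(k-1)}$ against $\{\bmv_{j_1}\otimes\cdots\otimes\bmv_{j_{k-1}}\}_\bmj,\bmtau_0,\dots,\bmtau_{t-1}$ to produce a new unit tensor $\bmtau_t$, and set $\bmxi_{t+1}=\bmZ[\bmtau_t]$. Applying the recursion \eqref{e:recurr},
\begin{align}\label{e:ytp1r}
\bmy_{t+1}=\bmX[\bmy_t^{\otimes(k-1)}]=\sum_{j=1}^r\beta_j\langle\bmy_t,\bmv_j\rangle^{k-1}\bmv_j+\bmZ[\bmy_t^{\otimes(k-1)}],
\end{align}
the first sum lies in $\mathrm{span}\{\bmv_1,\dots,\bmv_r\}$ and I would define $a_{t+1}\bmv_{t+1}$ to be this sum with $\bmv_{t+1}$ its normalization. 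For the noise term, expand $\bmy_t^{\otimes(k-1)}$ in the orthonormal system $\{\bmv_{j_1}\otimes\cdots\otimes\bmv_{j_{k-1}}\}_\bmj,\bmtau_0,\dots,\bmtau_t$ (which is complete for this particular tensor by construction of $\bmtau_t$), apply $\bmZ$ term by term, and substitute $\bmZ[\bmv_{j_1}\otimes\cdots\otimes\bmv_{j_{k-1}}]=\bmxi_\bmj$ and $\bmZ[\bmtau_s]=\bmxi_{s+1}$. This yields a linear combination of the $\bmxi_\bmj$ and of $\bmxi_1,\dots,\bmxi_t$, which we collect into $b_{t+1}\bmw_{t+1}$ as in \eqref{e:rrccer}, plus the single new term $\langle\bmy_t^{\otimes(k-1)},\bmtau_t\rangle\,\bmZ[\bmtau_t]=:c_{t+1}\bmxi_{t+1}$. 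Since $\bmtau_t$ is orthogonal to all of $\{\bmv_{j_1}\otimes\cdots\otimes\bmv_{j_{k-1}}\}_\bmj$ and $\bmtau_0,\dots,\bmtau_{t-1}$ and has unit norm, Lemma \ref{l:decompose} shows that, conditionally on the $\bmxi_\bmj$ ($\bmj\in\qq{1,r}^{k-1}$) and $\bmxi_1,\dots,\bmxi_t$, the vector $\bmxi_{t+1}=\bmZ[\bmtau_t]$ is independent with i.i.d. $\cN(0,1/n)$ entries. Assembling the three pieces gives $\bmy_{t+1}=a_{t+1}\bmv_{t+1}+b_{t+1}\bmw_{t+1}+c_{t+1}\bmxi_{t+1}$ of the required form, closing the induction.

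The main point needing care is organizational rather than conceptual: checking that \eqref{e:baser} really is an orthonormal family and that $\bmy_t^{\otimes(k-1)}$ is captured by it. Orthonormality of $\{\bmv_{j_1}\otimes\cdots\otimes\bmv_{j_{k-1}}\}_\bmj$ uses orthonormality of $\bmv_1,\dots,\bmv_r$ (so it is a family of $r^{k-1}$ orthonormal tensors); if one does not wish to assume the $\bmv_j$ orthonormal, I would instead first Gram--Schmidt the family $\{\bmv_{j_1}\otimes\cdots\otimes\bmv_{j_{k-1}}\}$, which changes nothing in the signal part of \eqref{e:ytp1r}. Completeness is automatic because $\bmtau_t$ was defined precisely as the Gram--Schmidt completion for $\bmy_t^{\otimes(k-1)}$, so the residual is zero. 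A secondary observation worth isolating early is that the number of auxiliary directions $\bmxi_\bmj$ equals the $n$-independent constant $r^{k-1}$; this is exactly what will later allow the concentration bounds in the rank-$r$ analogues of Claims \ref{c:coebound}--\ref{c:expression} to run with the same $\log n/\sqrt n$ factors. Everything else is a transcription of the rank-one computation, with $\bmv$ replaced by the unit vector $\bmv_t$ and the scalar recursion $a_{t+1}=\beta a_t^{k-1}$ replaced by the vector recursion $a_{t+1}\bmv_{t+1}=\sum_j\beta_j\langle\bmy_t,\bmv_j\rangle^{k-1}\bmv_j$.
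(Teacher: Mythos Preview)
Your proposal is correct and follows essentially the same approach as the paper: induction on $t$ with the base case read off from \eqref{e:y12r}, Gram--Schmidt extension to produce $\bmtau_t$, expansion of $\bmy_t^{\otimes(k-1)}$ in the orthonormal system $\{\bmv_{j_1}\otimes\cdots\otimes\bmv_{j_{k-1}}\}_\bmj,\bmtau_0,\dots,\bmtau_t$, and an appeal to Lemma~\ref{l:decompose} to identify $\bmxi_{t+1}$ as a fresh Gaussian vector. Your additional remarks on the orthonormality of the $\bmv_j$ (which the paper tacitly assumes) and on the $n$-independence of $r^{k-1}$ are accurate and useful, though the latter is only needed downstream in Claim~\ref{c:eedy}.
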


\begin{proof}[Proof of Claim \ref{c:bbexp}]
The Claim  \ref{c:bbexp} for $t=1$ follows from \eqref{e:y12r}.  In the following, assuming Claim \ref{c:bbexp} holds for $t$, we prove it for $t+1$.

Conditioning on $\bmxi_\bmj=\bmZ[\bmv_{j_1}\otimes \bmv_{j_2}\otimes \cdots\otimes \bmv_{j_{k-1}}]$ for index $\bmj=(j_1, j_2, \cdots, j_{k-1})\in \qq{1, r}^{k-1}$ and $\bmZ[\bmtau_s]=\bmxi_{s+1}$ for $0\leq s\leq t-2$, Lemma \ref{l:decompose} implies that $\bmxi_t=\bmZ[\bmtau_{t-1}]$ has the same law as $\tilde \bmZ[\bmtau_{t-1}]$, where $\tilde \bmZ$ is an independent copy of $\bmZ$. Since $\bm\tau_{t-1}$ is orthogonal to $\bmv_{j_1}\otimes \bmv_{j_2}\otimes \cdots\otimes \bmv_{j_{k-1}}$ for index $\bmj=(j_1, j_2, \cdots, j_{k-1})\in \qq{1, r}^{k-1}$ and $\bmZ[\bmtau_s]=\bmxi_{s+1}$ for $0\leq s\leq t-2$, $\bm\xi_t$ is an independent Gaussian random vector with each entry $\cN(0,1/n)$.

Let $\{\bmv_{j_1}\otimes \bmv_{j_2}\otimes \cdots \otimes \bmv_{j_{k-1}}\}_{\bmj \in \qq{1,r}^{k-1}}, \bmtau_0, \bmtau_1, \cdots, \bmtau_t$ be an orthogonal base for $\bmv_{j_1}\otimes \bmv_{j_2}\otimes \cdots \otimes \bmv_{j_{k-1}}$ for $\bmj\in \qq{1,r}^{k-1}$ and $ \bmy_0^{\otimes (k-1)}, \bmy_1^{\otimes (k-1)}, \cdots, \bmy_t^{\otimes (k-1)}$, obtained by the Gram-Schmidt orthonormalization procedure. More precisely, given those tensors $\{\bmv_{j_1}\otimes \bmv_{j_2}\otimes \cdots \otimes \bmv_{j_{k-1}}\}_{\bmj \in \qq{1,r}^{k-1}}, \bmtau_0, \bmtau_1, \cdots, \bmtau_{t-1}$, we denote 
\begin{align}\begin{split}\label{e:bbcoeffr}
&b_{(t+1)\bmj}
=\langle \bmy_t^{\otimes (k-1)},  \bmv_{j_1}\otimes \bmv_{j_2}\otimes \cdots\otimes \bmv_{j_{k-1}}\rangle,\quad \bmj=(j_1, j_2, \cdots, j_{k-1})\in \qq{1, r}^{k-1},\\
&b_{(t+1)s}
=\langle \bmy_s^{\otimes (k-1)},  \bm\tau_{s-1}\rangle,\quad 1\leq s\leq t,\quad
c_{t+1}=\langle \bmy_t^{\otimes (k-1)},  \bm\tau_{t}\rangle
\end{split}\end{align}
and
$\sum_{\bmj}b_{(t+1) \bmj}\bmv_{j_1}\otimes \bmv_{j_2}\otimes \cdots\otimes \bmv_{j_{k-1}}+b_{(t+1)1}\bm\tau_0+b_{(t+1)2}\bm\tau_1+\cdots b_{(t+1)t}\bm\tau_{t-1}$ is the projection of $\bmy_t^{\otimes (k-1)}$ on the span of  $\{\bmv_{j_1}\otimes \bmv_{j_2}\otimes \cdots \otimes \bmv_{j_{k-1}}\}_{\bmj \in \qq{1,r}^{k-1}}, \bmy_0^{\otimes (k-1)}, \bmy_1^{\otimes (k-1)}, \cdots, \bmy_{t-1}^{\otimes (k-1)}$.  Then we can write $\bmy_t^{\otimes (k-1)}$ in terms of the base \eqref{e:baser}
\begin{align}
\bmy_t^{\otimes (k-1)}
=\sum_{\bmj}b_{(t+1) \bmj}\bmv_{j_1}\otimes \bmv_{j_2}\otimes \cdots\otimes \bmv_{j_{k-1}}+b_{(t+1)1}\bm\tau_0+b_{(t+1)2}\bm\tau_1+\cdots b_{(t+1)t}\bm\tau_{t-1}+c_{t+1} \bm\tau_{t}.
\end{align}

The recursion \eqref{e:recurr} implies that
\begin{align}\label{e:yt+1r}
\bmy_{t+1}=\sum_{j=1}^r\beta_j(a_{tj}+b_t\langle \bmw_t, \bmv_j\rangle+c_t\langle \bmxi_t, \bmv_j\rangle )^{k-1}\bmv_j+ b_{t+1} \bmw_{t+1}+c_{t+1}\bmZ[\bmtau_t]
\end{align}
where
\begin{align}\begin{split}\label{e:defbt}
b_{t+1} \bmw_{t+1}
&=\bmZ[\sum_{\bmj}b_{(t+1) \bmj}\bmv_{j_1}\otimes \bmv_{j_2}\otimes \cdots\otimes \bmv_{j_{k-1}}+b_{(t+1)1}\bm\tau_0+b_{(t+1)2}\bm\tau_1+\cdots b_{(t+1)t}\bm\tau_{t-1}]\\
&=\sum_{\bmj}b_{(t+1) \bmj}\bmxi_\bmj+b_{(t+1)1}\bm\xi_1+b_{(t+1)2}\bmxi_2+\cdots b_{(t+1)t}\bmxi_{t}.
\end{split}\end{align}

Since $\bmtau_t$ is orthogonal to $\{\bmv_{j_1}\otimes \bmv_{j_2}\otimes \cdots \otimes \bmv_{j_{k-1}}\}_{\bmj \in \qq{1,r}^{k-1}}, \bmtau_0, \bmtau_1, \cdots, \bmtau_{t-1}$,
Lemma \ref{l:decompose} implies that conditioning on $\bmxi_\bmj=\bmZ[\bmv_{j_1}\otimes \bmv_{j_2}\otimes \cdots\otimes \bmv_{j_{k-1}}]$ for $\bmj=(j_1, j_2, \cdots, j_{k-1})\in \qq{1,r}^{k-1}$ and $\bmxi_{s+1}=\bmZ[\bmtau_s]$ for $0\leq s\leq t-1$, $\bmxi_{t+1}=\bmZ[\bmtau_t]$ is an independent Gaussian vector, with each entry $\cN(0,1/n)$. The above discussion gives us that
\begin{align}
\bmy_{t+1}=a_{t+1}\bmv_{t+1}+ b_{t+1} \bmw_{t+1}+c_{t+1}\bmxi_{t+1},\quad a_{t+1}=\sqrt{a_{(t+1)1}^2+a_{(t+1)2}^2+\cdots+a_{(t+1)r}^2}.
\end{align}
and
\begin{align}\label{e:arecur}
a_{(t+1) j}=\beta_j(a_{tj}+b_t\langle \bmw_t, \bmv_j\rangle+c_t\langle \bmxi_t, \bmv_j\rangle )^{k-1}, \quad 1\leq j\leq r.
\end{align}
\end{proof}

We recall that by our Assumption \ref{a:sumpr},
that
\begin{align}
1/\kappa\leq \left|\frac{\langle \bmu, \bmv_i\rangle}{\langle \bmu, \bmv_j\rangle}\right|\leq \kappa,
\end{align}
for all $1\leq i,j\leq r$. If $j_*=\argmax_j \beta_j\langle \bmu, \bmv_{j}\rangle^{k-2}$, it is necessary that $\beta_{j_*}\gtrsim \beta_1$, where the implicit constant depends on $\kappa$.

In the following, we study the case that  $\langle \bmu, \bmv_{j_*}\rangle>0$. The case $\langle \bmu, \bmv_{j_*}\rangle<0$ can be proven in exactly the same way, by simply changing $(\beta, \bmv_{j_*})$ with $((-1)^{k}\beta, -\bmv_{j_*})$. We prove by induction

\begin{claim}\label{c:eedy}
For any fixed time $t$, with probability at least $1-\OO(e^{-c(\log n)^2})$ the following holds: for any $s\leq t$,
\begin{align}\begin{split}\label{e:indr}
&|a_{sj_*}|\geq |a_{sj}|,\quad |a_s|\gtrsim |\beta_1|(\sum_{\bmj}|b_{s\bmj}|+|b_{s1}|+\cdots+|b_{s(s-1)}|),\\&
|a_s|\gtrsim n^\varepsilon \max\{\bm1(k\geq 3)|c_s/\beta_1^{1/(k-2)}|, |c_s/\sqrt n|\},
\end{split}\end{align}
and for $\bmj={(j_1, j_2, \cdots, j_{k-1})}\in \qq{1,r}^{k-1}$
\begin{align}\begin{split}\label{e:xibbr}
&\|\bm\xi_\bmj\| , \|\bm\xi_s\|_2 = 1+\OO(\log n/\sqrt n), \quad  |\langle \bmv_j, \bm\xi_\bmj\rangle|,|\langle \bma, \bm\xi_\bmj\rangle|, |\langle \bma, \bm\xi_s\rangle| \lesssim \log n/\sqrt n.\\
&\|{\rm{Proj}}_{{\rm Span}\{\bmv_1,\bmv_2,\cdots,\bmv_r, \{\bmxi_\bmj\}_{\bmj\in \qq{1,r}^{k-1}} ,\bm\xi_1,\cdots, \cdots, \bmxi_{s-1}\}}(\bmxi_s)\|_2 \lesssim \log n/\sqrt n.
\end{split}\end{align}
\end{claim}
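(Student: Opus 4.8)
The plan is to prove Claim \ref{c:eedy} by induction on $t$, adapting the proof of Claim \ref{c:coebound} but now propagating bounds on the whole coefficient vector $(a_{t1},\dots,a_{tr})$ through the recursion \eqref{e:arecur}, together with the projection bounds for the $b_{t\bmj}, b_{ts}$ and $c_t$. The base case $t=1$ is read off from \eqref{e:y12r}: there $a_{1j}=\beta_j\langle\bmu,\bmv_j\rangle^{k-1}$, $b_{1\bmj}=\prod_{s}\langle\bmu,\bmv_{j_s}\rangle$, and $c_1=\sqrt{1-(\sum_j\langle\bmu,\bmv_j\rangle^2)^{k-1}}\asymp 1$, so Assumptions \ref{a:sumpr}--\ref{a:sumpr2} and $|\beta_{j_*}\langle\bmu,\bmv_{j_*}\rangle^{k-2}|\geq n^\varepsilon$ give $|a_1|\asymp|a_{1j_*}|\gtrsim n^\varepsilon\sum_\bmj|b_{1\bmj}|$ and $|a_1|\gtrsim n^\varepsilon\max\{\bm1(k\geq3)|c_1/\beta_1^{1/(k-2)}|,|c_1/\sqrt n|\}$, while \eqref{e:xibbr} for $s=1$ follows from Lemma \ref{l:decompose} and standard $\chi^2$/linear-form concentration exactly as in Claim \ref{c:coebound}.

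For the inductive step I would feed the time-$t$ bounds into \eqref{e:arecur} and \eqref{e:defbt}. The cross terms obey $|b_t\langle\bmw_t,\bmv_j\rangle|\lesssim(\log n/\sqrt n)(\sum_\bmj|b_{t\bmj}|+\sum_s|b_{ts}|)\lesssim(\log n/\sqrt n)|a_t|/|\beta_1|$ and $|c_t\langle\bmxi_t,\bmv_j\rangle|\lesssim(\log n/n^\varepsilon)|a_t|$, which yields $a_{(t+1)j}=(1+\OO(\log n/n^\varepsilon))\beta_j a_{tj}^{k-1}$ whenever $|a_{tj}|$ is comparable to $|a_t|$, and $|a_{(t+1)j}|\lesssim|\beta_j|(|a_{tj}|+\mathrm{small})^{k-1}$ in general. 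The bounds on $\sum_\bmj|b_{(t+1)\bmj}|+\sum_s|b_{(t+1)s}|$ and on $|c_{t+1}|$ then come from the projection identity analogous to \eqref{e:sumb}: the span of $\{\bmv_{j_1}\otimes\cdots\otimes\bmv_{j_{k-1}}\}_\bmj\cup\{\bmtau_0,\dots,\bmtau_{t-1}\}$ lies inside the $(k-1)$-fold tensor span of $\{\bmv_1,\dots,\bmv_r,\bmw_t,\bmy_0,\dots,\bmy_{t-1}\}$, hence inside the span of $\{\bmv_1,\dots,\bmv_r,\{\bmxi_\bmj\},\bmxi_1,\dots,\bmxi_{t-1}\}^{\otimes(k-1)}$, so projecting $(a_t\bmv_t+b_t\bmw_t+c_t\bmxi_t)^{\otimes(k-1)}$ gives a bound $\lesssim(|a_t|+|b_t|+\log n|c_t|/\sqrt n)^{k-1}\lesssim|a_t|^{k-1}\asymp|a_{t+1}|/|\beta_1|$ (using $|\beta_{j_*}|\gtrsim|\beta_1|$ and $|a_{t+1}|\asymp|a_{(t+1)j_*}|\asymp|\beta_{j_*}||a_{tj_*}|^{k-1}$); the estimate $|c_{t+1}|\lesssim\sum_{r=1}^{k-1}|a_t|^{k-1-r}(|b_t|+|c_t|)^r$ followed by the same three-case analysis of $|c_{t+1}|/|a_{t+1}|$ as after \eqref{e:xtbb}, with $|\beta_1|$ in the role of $|\beta|$, closes the second and third lines of \eqref{e:indr}. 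The concentration statements \eqref{e:xibbr} at time $t+1$ transfer verbatim from Claim \ref{c:coebound}, using Lemma \ref{l:decompose} to make $\bmxi_{t+1}=\bmZ[\bmtau_t]$ an independent $\cN(0,1/n)^{n}$ vector and a union bound over the $O(r^{k-1}+t)$ Gaussian vectors present.

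The one genuinely new point --- and the main obstacle --- is the first inequality $|a_{sj_*}|\geq|a_{sj}|$. Unlike the rank-one case, the dominant coordinate need not be $j_*$ initially: Assumption \ref{a:sumpr} only forces the $|\langle\bmu,\bmv_j\rangle|$ to be comparable, whereas the $|\beta_j|$ may differ polynomially in $n$, so $|a_{1j_*}|/|a_{1j}|=(\gamma_{j_*}/\gamma_j)(|\langle\bmu,\bmv_{j_*}\rangle|/|\langle\bmu,\bmv_j\rangle|)$, with $\gamma_j\deq|\beta_j\langle\bmu,\bmv_j\rangle^{k-2}|$, can be as small as $\asymp1/\kappa$. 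The remedy is to track the rescaled quantities $z_{tj}\deq|\beta_j|^{1/(k-2)}|a_{tj}|$: the recursion gives $z_{(t+1)j}=(1+\oo(1))z_{tj}^{\,k-1}$, hence $z_{tj}=\gamma_j^{(1+\oo(1))(k-1)^{t}/(k-2)}$, so that
\[
\frac{|a_{tj_*}|}{|a_{tj}|}=\Big(\frac{|\beta_j|}{|\beta_{j_*}|}\Big)^{\frac{1}{k-2}}\Big(\frac{\gamma_{j_*}}{\gamma_j}\Big)^{(1+\oo(1))\frac{(k-1)^{t}}{k-2}},
\]
and since $\gamma_{j_*}/\gamma_j\geq(1-1/\kappa)^{-1}>1$ by Assumption \ref{a:sumpr2} while $|\beta_{j_*}|/|\beta_j|$ is only polynomially large, this ratio exceeds $1$ once $(k-1)^{t}\gtrsim_\kappa\log(\sqrt n|\beta_1|)$, i.e. once $t\gtrsim\log\log(\sqrt n|\beta_1|)/\log(k-1)$ --- precisely the extra waiting time built into Theorem \ref{t:mainr}. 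The care required is to show that the $\oo(1)$ noise-induced multiplicative errors do not compound destructively over this $\Theta(\log\log n)$-length warm-up, which follows from the uniform bounds $|b_t|,|c_t|\lesssim|a_t|/|\beta_1|$ via a telescoping estimate. After the warm-up the coordinate $j_*$ is the strict maximizer, $|a_s|\asymp|a_{sj_*}|$, and the remaining parts of the induction go through with $(\beta,\bmv)$ replaced by $(\beta_{j_*},\bmv_{j_*})$ and $|\beta|$ by $|\beta_1|$.
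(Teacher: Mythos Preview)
Your inductive skeleton --- base case read off from \eqref{e:y12r}, cross-term bounds $|b_t\langle\bmw_t,\bmv_j\rangle|\lesssim(\log n/\sqrt n)|a_t|/|\beta_1|$ and $|c_t\langle\bmxi_t,\bmv_j\rangle|\lesssim(\log n/n^\varepsilon)|a_t|$ fed into \eqref{e:arecur}, the projection estimate via the containment of spans, the three-case analysis of $|c_{t+1}|/|a_{t+1}|$, and the concentration step for $\bmxi_{t+1}$ --- is exactly the paper's argument.

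Where you diverge is in your treatment of the first inequality in \eqref{e:indr}. You read $|a_{sj_*}|\geq|a_{sj}|$ literally and then argue that this can fail at $s=1$, forcing a $\Theta(\log\log n)$ warm-up. The paper, however, only proves (and only needs) $|a_{sj_*}|\gtrsim|a_{sj}|$ with a $\kappa$-dependent implicit constant, and this holds from the very first step: since $j_*=\argmax_j|\beta_j\langle\bmu,\bmv_j\rangle^{k-2}|$ and Assumption~\ref{a:sumpr} makes the $|\langle\bmu,\bmv_j\rangle|$ comparable, one has $|\beta_{j_*}|\gtrsim|\beta_1|\geq|\beta_j|$ for every $j$, whence $|a_{1j}|=|\beta_j||\langle\bmu,\bmv_j\rangle|^{k-1}\lesssim|\beta_{j_*}||\langle\bmu,\bmv_{j_*}\rangle|^{k-1}=|a_{1j_*}|$. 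The inductive step is then immediate: $|a_{(t+1)j}|\lesssim|\beta_j||a_t|^{k-1}\lesssim|\beta_{j_*}||a_{tj_*}|^{k-1}\asymp|a_{(t+1)j_*}|$. Your concern that ``the $|\beta_j|$ may differ polynomially in $n$'' actually works in your favor here, not against you: polynomially smaller $|\beta_j|$ makes $|a_{1j}|$ smaller, not larger.

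Your rescaled-coordinate argument $z_{tj}=|\beta_j|^{1/(k-2)}|a_{tj}|$ is not wrong --- in fact it is precisely the paper's analysis of the ratio $r_t=\max_{j\neq j_*}z_{tj}/z_{tj_*}$ that appears \emph{after} Claim~\ref{c:eedy}, in the derivation of the refined bound \eqref{e:newboundr}. There the goal is the much stronger statement $|a_{tj}|\lesssim(\log n/(\sqrt n|\beta_1|))^{k-1}|a_{tj_*}|$ for $j\neq j_*$, and this is where Assumption~\ref{a:sumpr2} and the extra $\log\log(\sqrt n|\beta_1|)/\log(k-1)$ iterations in the time bound of Theorem~\ref{t:mainr} are used. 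So you have correctly identified an argument the paper needs, but placed it in the wrong claim: it belongs to the refinement step, not to Claim~\ref{c:eedy} itself.
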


\begin{proof}[Proof of Claim \ref{c:eedy}]
From \eqref{e:y12r}, we have
\begin{align}
\bmy_1&=\sum_{j=1}^r\beta_j\langle\bmy_0,\bmv_j\rangle^{k-1}\bmv_j
+\sum_{\bmj }\prod_{s=1}^{k-1}\langle \bmy_0, \bmv_{j_s}\rangle \bmxi_\bmj
+\sqrt{1-\left(\sum_{j=1}^r\langle \bmy_0, \bmv_j\rangle^2\right)^{(k-1)}}\bmxi_1\\
&=\sum_{j=1}^r a_{1j} \bmv_j+\sum_\bmj b_{1\bmj}\bmxi_\bmj+c_1\bmxi_1,
\end{align}
where $a_{1j}=\beta_j\langle\bmu,\bmv_j\rangle^{k-1}$ for $1\leq j\leq r$, $b_{1\bmj}=\prod_{s=1}^{k-1}\langle \bmu, \bmv_{j_s}\rangle$ for any index $\bmj=(j_1, j_2,\cdots, j_{k-1})$ and $c_1=
\sqrt{1-\left(\sum_{j=1}^r\langle \bmu, \bmv_j\rangle^2\right)^{(k-1)}}$.
Since $\bm\xi_\bmj$ are independent Gaussian vectors with each entry mean zero and variance $1/n$, the concentration for chi-square distribution implies that $\|\bm\xi_\bmj\|_2=1+\OO(\log n/\sqrt n)$ with probability $1-e^{c(\log n)^2}$.
Since $j_*=\argmax_j |\beta_j\langle \bmu, \bmv_j\rangle^{k-2}|$, combining with our Assumption \ref{a:sumpr}, it gives that $|a_{1j_*}|\geq |a_{1j}|/\kappa$. As a consequence, we also have that $|a_1|=\sqrt{a_{11}^2+a_{12}^2+\cdots+a_{1r}^2}\asymp |a_{1j_*}|$. Again using our Assumption \ref{a:sumpr}
\begin{align}
\sum_\bmj |b_{1\bmj}|\lesssim \left(\sum_{j=1}^{r}|\langle \bmu, \bmv_{j}\rangle|\right)^{k-1}\lesssim \sum_{j=1}^{r}|\langle \bmu, \bmv_{j}\rangle|^{k-1}\lesssim |a_{1j_*}|/\beta_{j_*}\lesssim |a_1|/\beta_1.
\end{align}
We can check that $|\beta_1 ^{1/{(k-2)}}a_1|\asymp|\beta_{j_*} ^{1/{(k-2)}}a_{1j_*}| =|\beta_{j_*}\langle \bmu, \bmv_{j_*}\rangle^{k-2}|^{(k-1)/(k-2)}\geq n^\varepsilon\geq n^\varepsilon|c_1|$, and $|\sqrt n a_1|\asymp |\sqrt n a_{1j_*}|=|\beta_{j_*}\langle \bmu, \bmv_{j_*}\rangle^{k-2}| |\sqrt n \langle \bmu, \bmv_{j_*}\rangle |\gtrsim n^\varepsilon\geq n^\varepsilon|c_1|$.
Moreover, conditioning on  $\bmxi_\bmj=\bmZ[\bmv_{j_1}\otimes \bmv_{j_2}\otimes \cdots\otimes \bmv_{j_{k-1}}]$ for $\bmj=(j_1, j_2, \cdots, j_{k-1})\in \qq{1,r}^{k-1}$, Lemma \ref{l:decompose} implies that $\bmxi_1=\bmZ[\bmtau_{0}]$  is an independent Gaussian random vector with each entry $\cN(0,1/n)$. By the standard concentration inequality, it holds that  with probability $1-e^{c(\log n)^2}$, $\|\bm\xi_1\|_2=1+\OO(\log n/\sqrt n)$, $|\langle \bma, \bm\xi_1\rangle|$ and the projection of $\bm\xi_1$ on the span of $\{\bmv_1,\bmv_2,\cdots, \bmv_r, \{\bmxi_\bmj\}_{\bmj\in\qq{1,r}^{k-1}}\}$ is bounded by $\log n/\sqrt n$. So far we have proved that \eqref{e:ind} and \eqref{e:xibb} hold for $t=1$.

In the following, we assume that \eqref{e:indr} and \eqref{e:xibbr} hold for $t$, and prove it for $t+1$. We recall from \eqref{e:rrccer} and \eqref{e:arecur} that
\begin{align}\label{e:tt1r}
a_{(t+1) j}=\beta_j(a_{tj}+b_t\langle \bmw_t, \bmv_j\rangle+c_t\langle \bmxi_t, \bmv_j\rangle )^{k-1},\quad b_t \bmw_t=\sum_{\bmj}b_{t\bmj}\bmxi_\bmj+b_{t1}\bmxi_1+\cdots+b_{tt-1}\bmxi_{t-1}.
\end{align}
By our induction hypothesis, we have
\begin{align}\label{e:tt2r}
|b_t\langle \bmw_t, \bmv_j\rangle|\lesssim \sum_{\bmj}|b_{t\bmj}\langle \bmxi_\bmj, \bmv_j\rangle|+|b_{t1}\langle \bmxi_1, \bmv_j\rangle|+\cdots+|b_{t(t-1)}\langle \bmxi_{t-1}, \bmv_j\rangle|
\lesssim (\log n /\sqrt{n})|a_t|/|\beta_1|,
\end{align}
and
\begin{align}\label{e:tt3r}
|c_t\langle \bmxi_t, \bmv_j\rangle|\lesssim (\log n/ \sqrt n)|c_t|\lesssim (\log n)|a_t|/n^\varepsilon.
\end{align}
It follows from plugging \eqref{e:tt2r} and \eqref{e:tt3r} into \eqref{e:tt1r}, we get
\begin{align}
a_{(t+1) j}=\beta_j(a_{tj}+b_t\langle \bmw_t, \bmv_j\rangle+c_t\langle \bmxi_t, \bmv_j\rangle )^{k-1}=\beta_j(a_{tj}+\OO(\log n|a_{t}|/n^\varepsilon))^k\lesssim \beta_j |a_t|^{k-1},
\end{align}
and especially
\begin{align}
a_{(t+1) {j_*}}=\beta_{j_*}(a_{tj_*}+\OO(\log n|a_{tj_*}|/n^\varepsilon))^k= (1+\OO(\log n/n^\varepsilon)) \beta_{j_*} a_{tj_*}^{k-1}.
\end{align}
Therefore, we conclude that
\begin{align}\label{e:abbr}
|a_{t+1j_*}|\asymp |\beta_{j_*} a_{tj_*}^{k-1}|\asymp|\beta  a_{t}^{k-1}|,
\end{align}
and
\begin{align}
|a_{(t+1)j}|
\lesssim \beta_j |a_{t}|^{k-1}\lesssim\beta_{j_*} |a_{tj_*}|^{k-1}\lesssim |a_{t+1j_*}|.
\end{align}

We recall from \eqref{e:bbcoeffr},
$\sum_{\bmj}b_{(t+1) \bmj}\bmv_{j_1}\otimes \bmv_{j_2}\otimes \cdots\otimes \bmv_{j_{k-1}}+b_{(t+1)1}\bm\tau_0+b_{(t+1)2}\bm\tau_1+\cdots b_{(t+1)t}\bm\tau_{t-1}$ is the projection of $\bmy_t^{\otimes (k-1)}$ on the span of  $\{\bmv_{j_1}\otimes \bmv_{j_2}\otimes \cdots \otimes \bmv_{j_{k-1}}\}_{\bmj \in \qq{1,r}^{k-1}}, \bmy_0^{\otimes (k-1)}, \bmy_1^{\otimes (k-1)}, \cdots, \bmy_{t-1}^{\otimes (k-1)}$. We also recall that $\{\bmv_{j_1}\otimes \bmv_{j_2}\otimes \cdots \otimes \bmv_{j_{k-1}}\}_{\bmj \in \qq{1,r}^{k-1}}, \bmtau_0, \bmtau_1, \cdots, \bmtau_{t-1}$ are obtained from $\{\bmv_{j_1}\otimes \bmv_{j_2}\otimes \cdots \otimes \bmv_{j_{k-1}}\}_{\bmj \in \qq{1,r}^{k-1}},, \bmy_0^{\otimes (k-1)}, \bmy_1^{\otimes (k-1)}, \cdots, \bmy_{t-1}^{\otimes (k-1)}$
by the Gram-Schmidt orthonormalization procedure. So we have that the span of  vectors $\{\bmv_{j_1}\otimes \bmv_{j_2}\otimes \cdots \otimes \bmv_{j_{k-1}}\}_{\bmj \in \qq{1,r}^{k-1}}, \bmtau_0, \bmtau_1, \cdots, \bmtau_{t-1}$ is the same as the span of vectors $\{\bmv_{j_1}\otimes \bmv_{j_2}\otimes \cdots \otimes \bmv_{j_{k-1}}\}_{\bmj \in \qq{1,r}^{k-1}},, \bmy_0^{\otimes (k-1)}, \bmy_1^{\otimes (k-1)}, \cdots, \bmy_{t-1}^{\otimes (k-1)}$, which is contained in the span of $\{\bmv_1, \bmv_2,\cdots, \bmv_r, \bmw_t,\bmy_0, \cdots, \bmy_{t-1}\}^{\otimes (k-1)}$. Moreover from the relation  \eqref{e:btformr} and \eqref{e:rrccer}, one can see that the span of $\{\bmv_1, \bmv_2,\cdots, \bmv_r, \bmw_t,\bmy_0, \cdots, \bmy_{t-1}\}$ is the same as the span of
$\{\bmv_1, \bmv_2,\cdots, \bmv_r, \{\bmxi_\bmj\}_{\bmj\in\qq{1,r}^{k-1}},\bmxi_1, \cdots, \bmxi_{t-1}\}$.
It follows that
\begin{align}\begin{split}\label{e:sumbr}
&|b_{t+1}|\lesssim\sqrt{\sum_{\bmj}b_{(t+1) \bmj}^2+b_{(t+1)1}^2+b_{(t+1)2}^2+\cdots +b_{(t+1)t}^2}\\
&=\|{\rm{Proj}}_{{\rm Span}\{\{\bmv_{j_1}\otimes \bmv_{j_2}\otimes \cdots \otimes \bmv_{j_{k-1}}\}_{\bmj \in \qq{1,r}^{k-1}}, \bmtau_0, \bmtau_1, \cdots, \bmtau_{t-1}\}}(a_t \bmv_t+b_t \bmw_t+c_t\bmxi_t)^{\otimes (k-1)}\|_2 \\
&\leq\|{\rm{Proj}}_{{\rm Span}\{\bmv_1, \bmv_2,\cdots, \bmv_r, \bmw_t,\bmy_0, \cdots, \bmy_{t-1}\}^{\otimes (k-1)}}(a_t \bmv_t+b_t \bmw_t+c_t\bmxi_t)^{\otimes (k-1)}\|_2\\
&\leq\|{\rm{Proj}}_{{\rm Span}\{\bmv, \bmw_t,\bmy_0, \cdots, \bmy_{t-1}\}}(a_t \bmv_t+b_t \bmw_t+c_t\bmxi_t)\|_2^{k-1}\\
&=\|a_t \bmv_t+b_t \bmw_t+c_t{\rm{Proj}}_{{\rm Span}\{\bmv_1, \bmv_2,\cdots, \bmv_r, \{\bmxi_\bmj\}_{\bmj\in\qq{1,r}^{k-1}},\bmxi_1, \cdots, \bmxi_{t-1}\}}(\bmxi_t)\|_2^{k-1}\\
&\lesssim \left(|a_t|+|b_t|+\frac{\log n|c_t|}{\sqrt n}\right)^{k-1}\lesssim |a_t|^{k-1}\lesssim |a_{t+1}|/\beta_1,
\end{split}\end{align}
where in the first line we used \eqref{e:defbt}, and in the last line of \eqref{e:sumbr} we used our induction hypothesis that $\|{\rm{Proj}}_{{\rm Span}\{\bmv_1,\bmv_2,\cdots,\bmv_r, \{\bmxi_\bmj\}_{\bmj\in \qq{1,r}^{k-1}},\bmxi_1, \cdots, \bmxi_{t-1}\}}(\bmxi_t)\|_2\lesssim \log n/\sqrt n$.

Finally we estimate $c_{t+1}$. We recall from \eqref{e:bbcoeffr}, the coefficient $c_{t+1}$ is the remainder of $\bmy_t^{\otimes (k-1)}$ after projecting on $\{\bmv_{j_1}\otimes \bmv_{j_2}\otimes \cdots \otimes \bmv_{j_{k-1}}\}_{\bmj \in \qq{1,r}^{k-1}}, \bmtau_0, \bmtau_1, \cdots, \bmtau_{t-1}$. It is bounded by the remainder of $\bmy_t^{\otimes (k-1)}$ after projecting on $\{\bmv_{j_1}\otimes \bmv_{j_2}\otimes \cdots \otimes \bmv_{j_{k-1}}\}_{\bmj \in \qq{1,r}^{k-1}}$,
\begin{align}
|c_{t+1}|\leq \|\bmy_t^{\otimes (k-1)}-a_t^{k-1}\bmv_t^{\otimes (k-1)}\|_2
=\|(a_t \bmv_t+b_t \bmw_t+c_t\bmxi_t)^{\otimes (k-1)}-a_t^{k-1}\bmv_t^{\otimes (k-1)}\|_2
.
\end{align}
The difference $(a_t \bmv_t+b_t \bmw_t+c_t\bmxi_t)^{\otimes (k-1)}-a_t^{k-1}\bmv_t^{\otimes (k-1)}$ is a sum of terms in the following form,
\begin{align}\label{e:ower}
\bm\eta_1\otimes \bm\eta_2\otimes \cdots\otimes \bm\eta_{k-1},
\end{align}
where $\bm\eta_1, \bm\eta_2, \cdots, \bm\eta_{k-1}\in \{a_t\bmv_t, b_t\bmw_t+c_t\bm\xi_t\}$, and at least one of them is $b_t\bmw_t+c_t\bm\xi_t$. We notice that by our induction hypothesis, $\|b_t\bmw_t+c_t\bm\xi_t\|_2\lesssim |b_t|\|\bmw_t\|_2+|c_t|\|\bm\xi_t\|_2\lesssim |b_t|+|c_t|$. For the $L_2$ norm of \eqref{e:ower}, each copy of  $a_t\bmv_t$ contributes $a_t$ and each copy of $b_t\bmw_t+c_t\bm\xi_t$ contributes a factor $|b_t|+|c_t|$. We conclude that
\begin{align}\label{e:ctboundr}
|c_{t+1}|\leq
\|(a_t \bmv_t+b_t \bmw_t+c_t\bmxi_t)^{\otimes (k-1)}-a_t^{k-1}\bmv_t^{\otimes (k-1)}\|_2
\lesssim \sum_{r=1}^{k-1}|a_t|^{k-1-r}(|b_t|+|c_t|)^r.
\end{align}
Combining with \eqref{e:abbr} that $|a_{t+1}|\asymp |\beta_1||a_t|^{k-1}$, we divide both sides of \eqref{e:ctboundr} by $|\beta_1||a_t|^{k-1}$,
\begin{align}\label{e:xtbbr}
\frac{|c_{t+1}|}{|a_{t+1}|}
&\lesssim \frac{1}{|\beta_1|}\sum_{r=1}^{k-1}\left(\frac{|b_t|}{|a_t|}+\frac{|c_t|}{|a_t|}\right)^r
\lesssim
\frac{1}{|\beta_1|}\sum_{r=1}^{k-1}\left(\frac{1}{|\beta_1|}+\frac{|c_t|}{|a_t|}\right)^r
\end{align}
There are three cases:
\begin{enumerate}
\item If $|c_t|/|a_t|\geq 1$, then
\begin{align}
\frac{|c_{t+1}|}{|a_{t+1}|}
\lesssim
\frac{1}{|\beta_1|}\sum_{r=1}^{k-1}\left(\frac{1}{|\beta_1|}+\frac{|c_t|}{|a_t|}\right)^r
\lesssim\frac{1}{|\beta_1|}\left(\frac{|c_t|}{|a_t|}\right)^{k-1}.
\end{align}
If $k=2$, then $|c_{t+1}|/|a_{t+1}|\lesssim (|c_t|/|a_t|)/n^{\varepsilon}$. If $k\geq 2$, by our induction hypothesis $|c_t|/|a_t|\lesssim \beta_1^{1/(k-2)}/n^\varepsilon$. Especially, $(|c_t|/|a_t|)^{k-2}/|\beta_1|\lesssim 1/n^{\varepsilon}$. We still get that $|c_{t+1}|/|a_{t+1}|\lesssim (|c_t|/|a_t|)/n^{\varepsilon}$.
\item If $1/|\beta_1|\lesssim |c_t|/|a_t|\leq 1$, then
\begin{align}
\frac{|c_{t+1}|}{|a_{t+1}|}
\lesssim
\frac{1}{|\beta_1|}\sum_{r=1}^{k-1}\left(\frac{1}{|\beta_1|}+\frac{|c_t|}{|a_t|}\right)^r
\lesssim\frac{1}{|\beta_1|}\left(\frac{|c_t|}{|a_t|}\right)\lesssim \frac{1}{n^\varepsilon}\left(\frac{|c_t|}{|a_t|}\right).
\end{align}
\item Finally for $ |c_t|/|a_t|\lesssim 1/|\beta_1|$, we will have
\begin{align}
\frac{|c_{t+1}|}{|a_{t+1}|}
\lesssim
\frac{1}{|\beta_1|}\sum_{r=1}^{k-1}\left(\frac{1}{|\beta_1|}+\frac{|c_t|}{|a_t|}\right)^r
\lesssim\frac{1}{|\beta_1|}\left(\frac{1}{|\beta_1|}\right)\lesssim \frac{1}{|\beta_1|^2}.
\end{align}
\end{enumerate}
In all these cases we have $|c_{t+1}|/|a_{t+1}|\lesssim\min\{\sqrt n, \bm1(k\geq 3)|\beta_1|^{1/(k-2)}\}/n^{\varepsilon}$.
This finishes the proof of the induction \eqref{e:indr}.

For \eqref{e:xibbr}, since $\bmtau_t$ is orthogonal to $\{\bmv_{j_1}\otimes \bmv_{j_2}\otimes \cdots \otimes \bmv_{j_{k-1}}\}_{\bmj \in \qq{1,r}^{k-1}}, \bmtau_0, \bmtau_1, \cdots, \bmtau_{t-1}$, Lemma \ref{l:decompose} implies that conditioning on $\bmxi_\bmj=\bmZ[\bmv_{j_1}\otimes \bmv_{j_2}\otimes \cdots\otimes \bmv_{j_{k-1}}]$ for index $\bmj=(j_1, j_2, \cdots, j_{k-1})\in \qq{1, r}^{k-1}$  and $\bmxi_{s+1}=\bmZ[\bmtau_s]$ for $0\leq s\leq t-1$, $\bmxi_{t+1}=\bmZ[\bmtau_t]$ is an independent Gaussian vector, with each entry $\cN(0,1/n)$. By the standard concentration inequality, it holds that  with probability $1-e^{c(\log n)^2}$, $\|\bm\xi_{t+1}\|_2=1+\OO(\log n/\sqrt n)$, $|\langle \bma, \bm\xi_{t+1}\rangle|$ and the projection of $\bm\xi_{t+1}$ on the span of $\{\bmv_1, \bmv_2,\cdots, \bmv_r, \{\bmxi_\bmj\}_{\bmj\in\qq{1,r}^{k-1}},\bmxi_1, \cdots, \bmxi_{t-1}\}$ is bounded by $\log n/\sqrt n$. This finishes the proof of the induction \eqref{e:xibbr}.
\end{proof}

Next, using \eqref{e:indr} and \eqref{e:xibbr} as input,
we prove that for
\begin{align}\label{e:tbbdr}
t\geq 1+\frac{1}{\varepsilon}\left(\frac{1}{2}+\frac{2\log|\beta_1|}{\log n}\right)+\frac{\log\log(\sqrt n|\beta_1|)}{\log(k-1)}
\end{align}
we have
\begin{align}\label{e:btformrcopy}
\bmy_t=\sum_{j=1}^ra_{tj} \bmv_j+\sum_\bmj b_{t\bmj}\bmxi_\bmj+b_{t1}\bmxi_1+\cdots+b_{tt-1}\bmxi_{t-1}+c_t\bmxi_t,
\end{align}
such that
\begin{align}\begin{split}\label{e:newboundr}
&|a_{tj}|\lesssim \left(\frac{\log n}{\sqrt n}\frac{1}{|\beta_1|}\right)^{k-1}|a_{tj_*}|, \quad j\neq j_*,\\
&b_{t(j_*,j_*,\cdots ,j_*)}=\frac{a_{tj_*}}{\beta_{j_*}}+\OO\left(\frac{\log n |a_t|}{|\beta_1|^2\sqrt n}\right),
|b_{(t+1) \bmj_*}|\lesssim \frac{\log n}{\sqrt n|\beta_1|^2}|a_{tj_*}|,\quad
\bmj_*=(j_*,j_*,\cdots, j_*),\\
&|b_{t1}|, |b_{t2}|,\cdots, |b_{t(t-1)}|\lesssim \frac{(\log n)^{1/2}|a_{t}|}{|\beta_1|^{3/2}n^{1/4}},\quad |c_t|\lesssim |a_t|/\beta_1^2
\end{split}\end{align}

Let $x_t=|c_t/a_t|\leq n^{-\varepsilon}|\beta|^{1/(k-2)}$, and $r_t=\max_{j\neq j_*}(\beta_j^{1/(k-2)}a_{tj})/(\beta_{j_*}^{1/(k-2)}a_{tj_*})$.
For $t=1$, our Assumption \ref{a:sumpr2} implies that
\begin{align}\begin{split}
\beta_j^{1/(k-2)}a_{1j}
&\leq (\beta_j\langle\bmu,\bmv_j\rangle^{k-2})^{(k-1)/(k-2)}\\
&\leq ((1-1/\kappa)\beta_{j_*}\langle\bmu,\bmv_{j_*}\rangle^{k-2})^{(k-1)/(k-2)}
\leq (1-1/\kappa)\beta_{j_*}^{1/(k-2)}a_{1j_*}.
\end{split}\end{align}
Thus we have that $r_1\leq (1-1/\kappa)$.
We recall from \eqref{e:tt1r}
\begin{align}\begin{split}
\beta_j^{1/(k-2)}a_{(t+1) j}
&=\left(\beta_j^{1/k-2}(a_{tj}+b_t\langle \bmw_t, \bmv_j\rangle+c_t\langle \bmxi_t, \bmv_j\rangle)\right)^{k-1}\\
&=\left(\beta_j^{1/k-2}(a_{tj}+\OO\left(|a_t|\frac{\log n(1/|\beta_1|+x_t)}{\sqrt n}\right)\right)^{k-1},
\end{split}\end{align}
where we used \eqref{e:indr} and \eqref{e:xibbr}.
Thus it follows that
\begin{align}\begin{split}\label{e:r_t}
r_{t+1}
&= \max_{j\neq j_*}\left( \frac{\beta_j^{1/k-2}(a_{tj}+\OO\left(|a_t|\log n(1/|\beta_1|+x_t)/\sqrt n\right)}{\beta_{j_*}^{1/k-2}(a_{t{j_*}}+\OO\left(|a_t|\log n(1/|\beta_1|+x_t)/\sqrt n\right)}\right)^{k-1}\\
&\leq \left( \frac{r_t+\OO\left(\log n(1/|\beta_1|+x_t)/\sqrt n\right)}{1+\OO\left(\log n(1/|\beta_1|+x_t)/\sqrt n\right)}\right)^{k-1}
\end{split}\end{align}
For $x_t$, \eqref{e:xtbbr} implies
\begin{align}\label{e:x_t}
x_{t+1}\lesssim \frac{1}{|\beta_1|}\sum_{r=1}^{k-1}\left(\frac{1}{|\beta_1|}+x_t\right)^r,
\end{align}
from the discussion after \eqref{e:xtbbr}, we have that either
$x_{t+1}\lesssim 1/|\beta_1|^2$, or $x_{t+1}\lesssim x_t/n^\varepsilon$.
Since $x_1=|c_1/a_1|\lesssim n^{1/2-\varepsilon}$, and $r_1\leq (1-1/\kappa)$ we conclude from \eqref{e:r_t} and \eqref{e:x_t} that
\begin{align}\label{e:cbbr}
x_t=|c_t/a_t|\lesssim 1/\beta_1^2,\quad r_t\lesssim (\log n/(|\beta_1|\sqrt n))^{k-1}, \end{align}
when
\begin{align}t\geq \frac{1}{\varepsilon}\left(\frac{1}{2}+\frac{2\log|\beta_1|}{\log n}\right)+\frac{\log\log(\sqrt n|\beta_1|)}{\log(k-1)}.
\end{align}

To derive the upper bound of $b_{t1}, b_{t2},\cdots, b_{t(t-1)}$, we use \eqref{e:sumbr}.
\begin{align}\begin{split}\label{e:sumb2r}
&\phantom{{}={}}\sum_{\bmj}b_{(t+1) \bmj}^2+b_{(t+1)1}^2+b_{(t+1)2}^2+\cdots +b_{(t+1)t}^2\\
&\leq
\|a_t \bmv_t+b_t \bmw_t+c_t{\rm{Proj}}_{{\rm Span}\{\bmv_1, \bmv_2,\cdots, \bmv_r, \{\bmxi_\bmj\}_{\bmj\in\qq{1,r}^{k-1}},\bmxi_1, \cdots, \bmxi_{t-1}\}}(\bmxi_t)\|_2^{2(k-1)}\\
&=\left(a_t^2+\OO\left(|a_t|\left(|b_t|+|c_t|\right)\frac{\log n}{\sqrt n}+\left(|b_t|+|c_t|\frac{\log n}{\sqrt n}\right)^2\right)\right)^{k-1},
\end{split}\end{align}
where we used \eqref{e:xibbr}. The first term $b_{(t+1) \bmj}$ is the projection of $\bmy_t^{\otimes (k-1)}$ on $\bmv_{j_1}\otimes \bmv_{j_2}\otimes\cdots \otimes \bmv_{j_{k-1}}$,
\begin{align}\label{e:bbjj}
b_{(t+1) \bmj}=\prod_{s=1}^{k-1}\langle a_t \bmv_t+b_t \bmw_t+c_t\bmxi_{t},\bmv_{j_s}\rangle
=\prod_{s=1}^{k-1}\left(a_{tj_s}+ \OO\left(\frac{\log n(|b_t|+|c_t|)}{\sqrt n}\right)\right),
\end{align}
and
\begin{align}\begin{split}\label{e:sumb3r}
\sum_{\bmj}b_{(t+1) \bmj}^2
&=\left(\sum_{s=1}^{k-1}|\langle a_t \bmv_t+b_t \bmw_t+c_t\bmxi_{t},\bmv_{j_s}\rangle|^{2}\right)^k\\
&=\left(a_t^2+\OO\left(|a_t|\left(|b_t|+|c_t|\right)\frac{\log n}{\sqrt n}+\left(|b_t|+|c_t|\frac{\log n}{\sqrt n}\right)^2\right)\right)^{k-1},
\end{split}\end{align}
where we used \eqref{e:xibb} that $|\langle \bmxi_\bmj, \bmv_j\rangle|, |\langle \bmxi_1, \bmv_j\rangle|,\cdots, |\langle \bmxi_t, \bmv_j\rangle|\lesssim \log n/\sqrt n$.
Now we can take difference of \eqref{e:sumb2r} and \eqref{e:sumb3r}, and use that $|b_t|\lesssim |a_t|/|\beta_1|$ from \eqref{e:indr} and $|c_t|\lesssim |a_t|/|\beta_1|^2$ from \eqref{e:cbbr},
\begin{align}\label{e:bttar}
 b_{(t+1)1}^2+b_{(t+1)2}^2+\cdots +b_{(t+1)t}^2
\lesssim a_t^{2(k-1)}\frac{\log n}{|\beta|\sqrt n}.
\end{align}
Using \eqref{e:bbjj} and \eqref{e:cbbr},
we get that
\begin{align}\begin{split}\label{e:bbbtr}
&b_{(t+1) \bmj_*}=a_{tj_*}^{k-1}\left(1+\OO\left(\frac{\log n}{\sqrt n|\beta_1|}\right)\right),\quad \bmj_*=(j_*, j_*,\cdots, j_*)\\
&|b_{(t+1) \bmj}|\lesssim \frac{\log n}{\sqrt n|\beta_1|}|a_{tj_*}|^{k-1}, \quad \bmj\neq\bmj_*.
\end{split}\end{align}
From \eqref{e:tt1r},  \eqref{e:abbr} and \eqref{e:cbbr}, we have that
\begin{align}\begin{split}\label{e:at+1r}
&a_{(t+1)j_*}=\beta_{j_*} b_{(t+1)\bmj_*}=\beta_{j_*}a_{tj_*}^{k-1}\left(1+\OO\left(\frac{\log n}{\sqrt n|\beta_1|}\right)\right),\\
&|a_{(t+1)j}|\lesssim \left(\frac{\log n}{\sqrt n|\beta_1|}\right)^{k-1}|a_{(t+1)j_*}|,\quad j\neq j_*.
\end{split}\end{align}
Using the above relation, we can simplify \eqref{e:bttar} and \eqref{e:bbbtr} as
\begin{align}
|b_{(t+1)1}|, |b_{(t+1)2}|, \cdots |b_{(t+1)t}|
\lesssim \frac{(\log n)^{1/2}|a_{t+1}|}{|\beta_1|^{3/2}n^{1/4}}.
\end{align}
and
\begin{align}\begin{split}
&b_{(t+1) \bmj_*}=\frac{a_{(t+1)j_*}}{\beta_{j_*}}\left(1+\OO\left(\frac{\log n}{\sqrt n|\beta_1|}\right)\right),\\
&|b_{(t+1) \bmj}|\lesssim \frac{\log n}{\sqrt n|\beta_1|^2}|a_{(t+1)j_*}|, \quad \bmj\neq\bmj_*.
\end{split}\end{align}
This finishes the proof of \eqref{e:newboundr}.

With the expression \eqref{e:newboundr}, we can process to prove our main results \eqref{e:cltr} and \eqref{e:betacltr}.
Thanks to \eqref{e:xibbr} and \eqref{e:btformrcopy}, for $t$ satisfies \eqref{e:tbbdr}, we have that with probability at least $1-\OO(e^{-c(\log n)^2})$
\begin{align}\label{e:ytnorm}
\|\bmy_t\|_2^2=a_{tj_*}^2\left(1+\frac{1}{\beta_{j_*}^2}+\frac{2\langle \bmv_{j_*}, \bmxi_{\bmj_*}\rangle}{\beta_{j_*}}+\OO\left(\frac{\log n}{\sqrt n}\left(\frac{\log n}{\sqrt n |\beta_1|}\right)^{k-1}+\frac{\log n}{\beta_1^2\sqrt n}+\frac{(\log n)^{3/2}}{|\beta_1|^{3/2}n^{3/4}}+\frac{1}{\beta_1^4}\right)\right)
\end{align}
where $\bmj_*=(j_*, j_*,\cdots, j_*)$. By rearranging it we get
\begin{align}\label{e:ytnorminvr}
1/\|\bmy_t\|_2
=a_{tj_*}^2\left(1-\frac{1}{2\beta_{j_*}^2}-\frac{2\langle \bmv_{j_*}, \bmxi_{\bmj_*}\rangle}{\beta_{j_*}}+\OO\left(\frac{\log n}{\sqrt n}\left(\frac{\log n}{\sqrt n |\beta_1|}\right)^{k-1}+\frac{\log n}{\beta_1^2\sqrt n}+\frac{(\log n)^{3/2}}{|\beta_1|^{3/2}n^{3/4}}+\frac{1}{\beta_1^4}\right)\right)
\end{align}
We can take the inner product $\langle \bma, \bmy_t\rangle$, and multiply \eqref{e:ytnorminvr}
\begin{align}\begin{split}
\langle \bma, \bmu_t\rangle
=\frac{\langle \bma, \bmy_t\rangle}{\|\bmy_t\|_2}
&=\sgn(a_{tj_*})\left(\left(1-\frac{1}{2\beta_{j_*}^2}\right)\langle\bma, \bmv_{j_*}\rangle+\frac{\langle \bma, \bmxi_{\bmj_*}\rangle-\langle \bma, \bmv_{j_*}\rangle\langle \bmv_{j_*}, \bmxi_{\bmj_*}\rangle}{\beta}\right)\\
&+\OO_\bP\left(\frac{\log n}{\sqrt n}\left(\frac{\log n}{\sqrt n |\beta_1|}\right)^{k-1}+\frac{\log n}{\beta_1^2\sqrt n}+\frac{(\log n)^{3/2}}{|\beta_1|^{3/2}n^{3/4}}+\frac{1}{\beta_1^4}\right),
\end{split}\end{align}
where we used \eqref{e:xibb} that with high probability $|\langle \bma, \bm\xi_{\bmj}\rangle|$, $|\langle \bma, \bm\xi_s\rangle|$ for $1\leq s\leq t$ are bounded by $\log n/\sqrt n$.
This finishes the proof of \eqref{e:cltr}. For $\widehat\beta$ in \eqref{e:betacltr}, we have that
\begin{align}\label{e:newbetar}
\bmX[\bmu_t^{\otimes k}]
=\frac{\bmX[\bmy_t^{\otimes k}]}{\|\bmy_t\|_2^k}
=\frac{\langle \bmy_t, \bmX[\bmy_t^{\otimes(k-1)}]\rangle}{\|\bmy_t\|_2^k}
=\frac{\langle \bmy_t, \bmy_{t+1}\rangle}{\|\bmy_t\|_2^k}
.
\end{align}

Thanks to \eqref{e:cbbr}, \eqref{e:at+1r} and \eqref{e:xibbr}, for $t$ satisfies \eqref{e:tbbdr},  with probability at least $1-\OO(e^{-c(\log n)^2})$, we can write the first term on the righthand side of \eqref{e:newbeta}, we have
\begin{align}
\bmy_{t+1}=\sum_j a_{(t+1)j} \bmv_j+\sum_{\bmj}b_{(t+1)\bmj}\bmxi_\bmj+b_{(t+1)1}\bmxi_1+\cdots+b_{(t+1)t}\bmxi_{t}+c_{t+1}\bmxi_{t+1},
\end{align}
where $|c_{t+1}|\lesssim |a_t|^{k-1}/\beta^2$,
\begin{align}\begin{split}
&a_{(t+1)j_*}=\beta_{j_*} b_{(t+1)\bmj_*}=\beta_{j_*}a_{tj_*}^{k-1}\left(1+\OO\left(\frac{\log n}{\sqrt n|\beta_1|}\right)\right),\\
&|a_{(t+1)j}|\lesssim \left(\frac{\log n}{\sqrt n|\beta_1|}\right)^{k-1}|a_{(t+1)j_*}|,\quad j\neq j_*
\end{split}\end{align}
and
\begin{align}\begin{split}
&b_{(t+1) \bmj_*}=\frac{a_{(t+1)j_*}}{\beta_{j_*}}\left(1+\OO\left(\frac{\log n}{\sqrt n|\beta_1|}\right)\right),\\
&|b_{(t+1) \bmj}|\lesssim \frac{\log n}{\sqrt n|\beta_1|^2}|a_{(t+1)j_*}|, \quad \bmj\neq\bmj_*,
\\
 & |b_{(t+1)1}|, |b_{(t+1)2}|+\cdots +|b_{(t+1)t}|
\lesssim a_t^{k-1}\frac{(\log n)^{1/2}}{|\beta|^{1/2}n^{1/4}}.
\end{split}\end{align}
From the discussion above, combining with \eqref{e:btformrcopy} and \eqref{e:newboundr} with straightforward computation, we have
\begin{align}\label{e:crossr}
\langle \bmy_t, \bmy_{t+1}\rangle=
\beta_{j_*} a_{tj_*}^k\left(1+\frac{1}{\beta_{j_*}^2}+\frac{(k+1)\langle \bm\xi_{\bmj_*}, \bmv_{j_*}\rangle}{\beta_{j_*}}+\OO\left(\frac{\log n}{\sqrt n}\left(\frac{\log n}{\sqrt n |\beta_1|}\right)^{k-1}+\frac{\log n}{\beta_1^2 \sqrt n}+\frac{(\log n)^{3/2}}{|\beta_1|^{3/2}n^{3/4}}\right)\right).
\end{align}
By plugging \eqref{e:ytnorminvr} and \eqref{e:crossr} into \eqref{e:newbetar}, we get
\begin{align}
\bmX[\bmu_t^{\otimes k}]
&=\sgn(a_{tj_*}^k)\left(\beta_{j_*}+\langle \bm\xi_{\bmj_*}, \bmv_{j_*}\rangle -\frac{k/2-1}{\beta_{j_*}}\right)\\
&+\OO\left(\frac{\log n}{\sqrt n}\left(\frac{\log n}{\sqrt n |\beta_1|}\right)^{k-1}+\frac{\log n}{|\beta_1| \sqrt n}+\frac{(\log n)^{3/2}}{|\beta_1|^{1/2}n^{3/4}}+\frac{1}{|\beta_1|^3}\right)
\end{align}

Since by our assumption, in Case 1 we have that $\beta_{j^*}>0$. Thanks to \eqref{e:at+1r} $a_{t+1j_*}=\beta a_{tj_*}^{k-1}(1+\oo(1))$, especially $a_{t+1j_*}$ and $a_{tj_*}$ are of the same sign. In the case $\langle \bmu, \bmv_{j_*}\rangle>0$, we have $a_{1j_*}=\beta\langle \bmu, \bmv_{j_*}\rangle^{k-1}>0$. We conclude that $a_{tj_*}>0$. Therefore $\sgn(\bmX[\bmu_t^{\otimes k}])=\sgn(a_{tj_*})^k=+$, and it follows that
\begin{align}
\bmX[\bmu_t^{\otimes k}]
&=\beta_{j_*}+\langle \bm\xi_{\bmj_*}, \bmv_{j_*}\rangle -\frac{k/2-1}{\beta_{j_*}}\\
&+\OO\left(\frac{\log n}{\sqrt n}\left(\frac{\log n}{\sqrt n |\beta_1|}\right)^{k-1}+\frac{\log n}{|\beta_1| \sqrt n}+\frac{(\log n)^{3/2}}{|\beta_1|^{1/2}n^{3/4}}+\frac{1}{|\beta_1|^3}\right)
\end{align}
This finishes the proof of \eqref{e:betacltr}. The Cases \ref{c:case2r}, \ref{c:case3r}, \ref{c:case4r}, by simply changing $(\beta_{j_*}, \bmv_{j_*})$ in the righthand side of \eqref{e:cltr} and \eqref{e:betacltr} to the corresponding limit.

\end{proof}

\subsection{Proof of Theorem \ref{t:randominit}}
\begin{proof}[Proof of Theorem \ref{t:randominit}]
We first prove  \eqref{e:defp}. If $\bmu$ is uniformly distributed over the unit sphere, then it has the same law as $\bm\eta/\|\bm\eta\|_2$, where $\bm\eta$ is an $n$-dim standard Gaussian vector, with each entry $\cN(0,1)$. With this notation
\begin{align}\label{e:newep}
|\beta_j\langle \bmu, \bmv_j\rangle^{k-2}|
=|\beta_j\langle \bm\eta, \bmv_j\rangle^{k-2}|/\|\bm\eta\|_2^{k-2},
\end{align}
and we can rewrite $\bP(i=\argmax_j |\beta_j\langle \bmu, \bmv_j\rangle^{k-2}|)$
as
\begin{align}
\bP(i=\argmax_j |\beta_j\langle \bmu, \bmv_j\rangle^{k-2}|)=
\bP(i=\argmax_j |\beta_j\langle \bm\eta, \bmv_j\rangle^{k-2}|).
\end{align}
Since $\bmv_1, \bmv_2,\cdots, \bmv_r$ are orthonormal vectors, $\langle\bmv_1, \bm\eta\rangle, \langle \bmv_2,\bm\eta\rangle,\cdots, \langle \bmv_r, \bm\eta\rangle$ are independent standard Gaussian random variables. Then we have
\begin{align}\begin{split}
p_i&=\bP(i=\argmax_j |\beta_j\langle \bm\eta, \bmv_j\rangle^{k-2}|)
=\bP( |\beta_i/\beta_\ell|^{1/{k-2}}\langle \bm\eta, \bmv_i\rangle|\geq |\langle \bm\eta, \bmv_\ell\rangle|, \text{ for all } i\neq \ell)\\
&=\int_0^\infty \sqrt{\frac{2}{\pi}}e^{-x^2/2}\left(\prod_{\ell\neq i}\int_0^{\left(\frac{|\beta_i|}{|\beta_\ell|}\right)^{\frac{1}{k-2}}x}\sqrt{\frac{2}{\pi}}e^{-y^2/2}\rd y\right)\rd x.
\end{split}\end{align}
This gives \eqref{e:defp}. Using the fact we can rewrite $\bmu$ as $\bm\eta/\|\bm\eta\|_2$, we have that with probability $1-\OO(1/\sqrt \kappa)$,
\begin{align}
1/\sqrt{\kappa n}\leq |\langle \bmu, \bmv_i\rangle|\leq \sqrt{\kappa/n},
\end{align}
for all $1\leq i\leq r$. Thus Assumption \eqref{a:sumpr} holds, and especially,
\begin{align}
\max_j |\beta_j\langle \bmu, \bmv_j\rangle^{k-2}|
\geq |\beta_1\langle \bmu, \bmv_1\rangle^{k-2}|
\geq |\beta_1(1/\sqrt{\kappa n})^{k-2}|\gtrsim n^\varepsilon.
\end{align}
Theorem \ref{t:randominit} then follows directly from Theorem \ref{t:mainr}.
\end{proof}

\subsection{Proof of Corollaries \ref{coro3}, \ref{coro4} and \ref{coro5}}

\begin{proof}[Proof of Corollary \ref{coro3}]
According to the definition of $\bmxi$ in \eqref{e:cltr} of Theorem \ref{t:mainr}, i.e.
$\bmxi=\bmZ[\bmv_{j_*}^{\otimes(k-1)}]$, is an $n$-dim vector, with each entry i.i.d. $\cN(0,1/n)$ Gaussian random variable. 
We  see that
$$
\langle \bm\xi, \bmv\rangle\stackrel{d}{=} \cN\left(0,1/n\right).
 $$
  Especially with high probability we will have that $|\langle \bm\xi, \bmv\rangle|\lesssim \log n/\sqrt n$.
Then we conclude from \eqref{e:betacltr}, with high probability it holds
\begin{align}\label{e:betabb}
\widehat \beta=\beta_{j_*}+\OO\left(\frac{1}{\beta_{j_*}}+\frac{\log n}{\sqrt n}\right).
\end{align} 
With the bound \eqref{e:betabb}, we can replace $\langle \bma, \bmv\rangle/(2\beta^2)$ on the righthand side of \eqref{e:cltr} by $\langle \bma, \bmv\rangle/(2\widehat\beta^2)$, which gives an error \begin{align}
\left|\frac{\langle \bma, \bmv\rangle}{2\beta_{j_*}^2}-\frac{\langle \bma, \bmv\rangle}{2\widehat\beta^2}\right|=\OO\left(|\langle \bma, \bmv\rangle|\left(\frac{1}{|\beta_{j_*}|^4}+\frac{\log n}{|\beta_{j_*}|^3\sqrt n}\right)\right).
\end{align}
Combining the above discussion together, 
we can rewrite \eqref{e:cltr}  as
\begin{align}\begin{split}\label{e:avvbb}
\langle \bma, \widehat\bmv\rangle-\left(1-\frac{1}{2\widehat\beta^2}\right)\langle\bma, \bmv\rangle
&=\frac{\langle \bma, \bmxi\rangle-\langle \bma, \bmv_{j_*}\rangle\langle \bmv_{j_*}, \bmxi\rangle}{\beta_{j_*}}\\
&+\OO_\bP\left(\frac{\log n}{\sqrt n}\left(\frac{\log n}{\sqrt n |\beta_1|}\right)^{k-1}+\frac{\log n}{|\beta_1|^2\sqrt n}+\frac{(\log n)^{3/2}}{|\beta_1|^{3/2}n^{3/4}}+\frac{1}{|\beta_1|^4}\right),
\end{split}\end{align}
with high probability, where we used that $|\beta_{j_*}|\gtrsim |\beta_1|$.

Again thanks to the definition of $\bmxi$ in \eqref{e:cltr}  of Theorem \ref{t:main}, i.e.
$\bmxi=\bmZ[\bmv^{\otimes(k-1)}]$, is an $n$-dim vector, with each entry i.i.d. $\cN(0,1/n)$ Gaussian random variable, we see that
\begin{align}
\langle \bma, \bmxi\rangle-\langle \bma, \bmv_{j_*}\rangle\langle \bmv_{j_*}, \bmxi\rangle
=\langle \bma -\langle \bma, \bmv_{j_*}\rangle \bmv_{j_*}, \bm\xi\rangle,
\end{align}
is a Gaussian random variable, with mean zero and variance
\begin{align}
\bE[\langle \bma -\langle \bma, \bmv_{j_*}\rangle \bmv_{j_*}, \bm\xi\rangle^2]
&=\frac{1}{n}\|\bma -\langle \bma, \bmv_{j_*}\rangle \bmv_{j_*}\|_2^2=\frac{1}{n}\langle \bma, (\bmI_n-\bmv_{j_*} \bmv_{j_*}^\top)\bma\rangle\\
&=\frac{1+\oo(1)}{n}\langle \bma, (\bmI_n-\widehat\bmv_{j_*} \widehat\bmv_{j_*}^\top)\bma\rangle.
\end{align}
 This together with \eqref{e:betabb}, \eqref{e:avvbb} as well as our assumption \eqref{e:avboundk} \begin{equation}
\frac{\sqrt{n}\widehat\beta}{\sqrt{\langle \bma, (\bmI_n-\widehat\bmv \widehat\bmv^\top)\bma\rangle}}\left[\big(1-\frac{1}{2\widehat\beta^2}\big)^{-1}\langle \bma, \widehat\bmv\rangle-\langle\bma, \bmv_{j_*}\rangle\right]
\xrightarrow{d} \cN(0,1).
\end{equation}
Under the same assumption, we have similar results for Cases \ref{c:case2}, \ref{c:case3}, \ref{c:case4}, by simply changing $(\beta_{j_*}, \bmv_{j_*})$
in the righthand side of \eqref{e:clt} and \eqref{e:betaclt} to the corresponding expression.
\end{proof}

\begin{proof}[Proof of Corollary \ref{coro4}]
For $k\geq 3$ and $|\beta_1|\geq n^{(k-2)/2+\varepsilon}$, the assumption \ref{e:avboundk} holds trivially. The claim \eqref{e:bba} follows from \eqref{coro:clt3}. For \eqref{e:bbc}, we recall that in   \eqref{e:betacltrr}, $\bmxi=\bmZ[\bmv_{i}^{\otimes(k-1)}]$, is an $n$-dim vector, with each entry i.i.d. $\cN(0,1/n)$ Gaussian random variable. 
We  see that
$$
\langle \bm\xi, \bmv_i\rangle\stackrel{d}{=} \cN\left(0,1/n\right).
 $$
  Especially with high probability we will have that $|\langle \bm\xi, \bmv\rangle|\lesssim \log n/\sqrt n$.
Then we conclude from \eqref{e:betacltrr}, with high probability it holds
\begin{align}\label{e:betabbb}
\widehat \beta=\beta_{i}+\OO\left(\frac{1}{\beta_{i}}+\frac{\log n}{\sqrt n}\right).
\end{align} 
With the bound \eqref{e:betabbb}, we can replace $(k/2-1)/\beta_i$ on the righthand side of \eqref{e:betacltrr} by $(k/2-1)/\widehat\beta$, which gives an error \begin{align}
\left|\frac{k/2-1}{\beta_i}-\frac{k/2-1}{\widehat\beta}\right|=\OO\left(\frac{1}{|\beta_{1}|^2}+\frac{\log n}{|\beta_{1}|\sqrt n}\right),
\end{align}
where  we used that $|\beta_{i}|\gtrsim |\beta_1|$.
Combining the above discussion together, 
we can rewrite \eqref{e:betacltrr}  as
\begin{align}\label{e:eet}
\beta_i=\widehat\beta+\frac{k/2-1}{\widehat\beta}-\langle \bm\xi, \bmv_{i}\rangle
+\OO_\bP\left(\frac{\log n}{\sqrt n}\left(\frac{\log n}{\sqrt n |\beta_1|}\right)^{k-1}+\frac{\log n}{|\beta_1| \sqrt n}+\frac{(\log n)^{3/2}}{|\beta_1|^{1/2}n^{3/4}}+\frac{1}{|\beta_1|^2}\right).
\end{align}
Since $\langle \bm\xi, \bmv_i\rangle\stackrel{d}{=} \cN\left(0,1/n\right)$, and the error term in \eqref{e:eet} is much smaller than $1/\sqrt n$. We conclude from \eqref{e:eet}
\begin{align}
\sqrt n \left(\beta_i-\widehat\beta+\frac{k/2-1}{\widehat\beta}\right)\xrightarrow{d} \cN(0,1).
\end{align}
This finishes the proof of \eqref{e:bbc}.

\end{proof}

\begin{proof}[Proof of Corollary \ref{coro5}]
Given the significance level $\alpha$, the asymptotic confidence intervals in Corollary \ref{coro5}  can be calculated from Corollary \ref{coro4} by bounding the absolute values of the left hand sides of \eqref{e:bba} and \eqref{e:bbc} at $z_{\alpha}$.
\end{proof}

\bibliographystyle{abbrv}
\bibliography{References}

\end{document}